\documentclass[11pt,reqno]{amsart}
\usepackage[margin=1in]{geometry}
\usepackage{amssymb}
\usepackage{amsthm}
\usepackage{amsmath}
\usepackage{subfiles}
\usepackage{bbm}
\usepackage{xcolor}
\usepackage[colorlinks,linkcolor=blue,citecolor=blue]{hyperref}
\usepackage{mdframed}
\usepackage{framed}
\usepackage{enumitem}
\usepackage{relsize}
\usepackage{graphicx}
\usepackage{esint}
\usepackage{tikz-cd}
\usepackage{cancel}
\usepackage[colorinlistoftodos,textsize=tiny]{todonotes}

\usepackage{graphicx}
\usepackage{caption}
\usepackage{wrapfig}
\usepackage{adjustbox}

\usepackage{tikz}
\usepackage{pgfplots}
    \pgfplotsset{compat=1.3}
\usetikzlibrary{arrows,decorations.markings}
\usetikzlibrary{plotmarks}
\usetikzlibrary{calc}
\usepackage{vwcol}
\usepgfplotslibrary{fillbetween}

\newcommand{\C}{\mathbb{C}}

\newcommand{\E}{\mathbb{E}}

\newcommand{\N}{\mathbb{N}}

\renewcommand{\P}{\mathbb{P}}

\newcommand{\R}{\mathbb{R}}

\newcommand{\Z}{\mathbb{Z}}
\newcommand{\1}{\mathbbm{1}}

\renewcommand{\AA}{\mathcal{A}}

\newcommand{\DD}{\mathcal{D}}

\newcommand{\GG}{\mathcal{G}}

\newcommand{\TT}{\mathcal{T}}

\newcommand{\ZZ}{\mathcal{Z}}

\newcommand{\cc}[1]{\overline{#1}}

\newcommand{\dd}[1]{\partial#1}

\newcommand{\grad}{\nabla}
\renewcommand{\url}[2]{\href{#1}{\textcolor{blue}{#2}}}

\newcommand{\hm}{\textnormal{hm}}
\newcommand{\spl}{\textnormal{spl}}

\renewcommand{\bot}{\textnormal{bot}}
\renewcommand{\top}{\textnormal{top}}

\setlength{\parskip}{0.6ex}

\renewcommand\Re{\operatorname{Re}}
\renewcommand\Im{\operatorname{Im}}
\newcommand\diag{\textnormal{diag}}
\newcommand\dist{\textnormal{dist}}
\newcommand\oOmega{\overline{\Omega}{}}
\newcommand\cm{\mathfrak{m}}
\newcommand\zm{z_\mu}
\newcommand\conn{{[\mathrm{c}]}}
\newcommand\tprod{\textstyle\prod}

\theoremstyle{definition}
\newtheorem{defn}{Definition}
\numberwithin{defn}{section}

\newtheorem{rema}[defn]{Remark}

\theoremstyle{plain}
\newtheorem{lmma}[defn]{Lemma}
\newtheorem{prop}[defn]{Proposition}
\newtheorem{thrm}[defn]{Theorem}
\newtheorem{corl}[defn]{Corollary}
\newtheorem{prob}[defn]{Problem}

\title[Domino tilings of black-and-white Temperleyan cylinders]{Domino tilings of black-and-white Temperleyan cylinders}

\author[Dmitry Chelkak]{Dmitry Chelkak}

\author[Zachary Deiman]{Zachary Deiman}

\thanks{{\emph{Key words:} dimer model, Gaussian free field, discrete Gaussian distribution}}

\thanks{{\emph{Address:} \textsc{Department of Mathematics, University of Michigan,
Ann Arbor, MI 48109-1043, USA}}}

\thanks{\emph{E-mail:} {\texttt{dchelkak@umich.edu}, \texttt{zdeiman@umich.edu}}}

\begin{document}

\begin{abstract}
    We consider the dimer model in cylindrical domains $\Omega_\delta$ on square grids of mesh size~$\delta$ with two Temperleyan boundary components of different colors. Assuming that the $\Omega_\delta$ approximate a cylindrical domain $\Omega$ as $\delta\to 0$, we prove the convergence of height fluctuations to the Gaussian Free Field in $\Omega$ plus an independent discrete Gaussian multiple of the harmonic measure of one of the boundary components. The limit of the dimer coupling functions on~$\Omega_\delta$ is holomorphic in~$\Omega$ but not conformally covariant. Given this, we determine the limiting structure of height fluctuations from general principles rather than from explicit computations. In particular, our analysis justifies the inevitable appearance of the discrete Gaussian distribution in the doubly connected setup.
\end{abstract}

\maketitle

\tableofcontents

\setenumerate[0]{label=\textnormal{(}\alph*\textnormal{)}}

\newpage

\section{Introduction and main results}\label{section:introduction}

\subsection{Introduction} \label{subsec:introduction}
Let $\GG$ be a finite weighted planar bipartite graph with vertex set $V(\GG)$, edge set $E(\GG)$, and positive edge weights $\nu:E(\GG)\to \R_+$. We call the two bipartite classes of $V(\GG)$ \emph{black} and \emph{white} and denote them by $B(\GG)$ and $W(\GG)$, respectively. A \textit{dimer cover} (or {perfect matching}) of $\GG$ is a subset of edges $\DD\subset E(\GG)$ such that each $v\in V(\GG)$ is adjacent to exactly one edge $e\in\DD$.  The \emph{dimer model} on $\GG$ refers to choosing a random dimer cover of $\GG$ with probabilities $\frac{1}{\ZZ}\prod_{e\in\DD}\nu(e)$, where $\ZZ$ is an appropriate normalizing constant called the \emph{partition function} of the model. It follows from a famous theorem due to Kasteleyn that one can find ``complex signs'' $\varsigma_{(bw)}\in\mathbb{T}=\{\varsigma\in\C:|\varsigma|=1\}$ such that $\ZZ=|\det K_\GG|$, where the \emph{Kasteleyn matrix} $K_\GG:\C^{W(\GG)}\to \C^{B(\GG)}$ is given by $K(b,w)=\varsigma_{(bw)}\nu_{(bw)}$ for $(bw)\in E(\GG)$. More precisely, this identity holds if the alternating product of the signs $\varsigma_{(bw)}$ around each face~$f$ of~$\GG$ equals~$-i^{\deg f}\in\{\pm 1\}$, where $\deg f\in 2\N$ is the degree of~$f$; see~\cite[Section~3]{kenyon-lectures-on-dimers}.

Classically, with each dimer cover $\DD$ one can associate Thurston's \emph{height function} $h_\GG$ defined on vertices of the dual graph of $\GG$. To this end, one fixes a reference dimer cover $\DD_0$, defines $h_\GG(v_\mathrm{out})=0$ on the outer face of $\GG$, and sets the increment of $h_\GG$ along an edge $(bw)^*$ dual to $(bw)\in E(\GG)$ (oriented so that $b$ is on the right) to be $\1[(bw)\in\DD]-\1[(bw)\in\DD_0]$. It is easy to see that $h_\GG$ is well-defined and that the \emph{height fluctuations} $\hbar_\GG(v):=h_\GG(v)-\E[h_\GG(v)]$ do not depend on the choice of~$\DD_0$. Let
\begin{equation}
\label{eq:Hn-def}
H_{n,\GG}(v_1,\ldots,v_n):=\E[\hbar_\GG(v_1)\cdots\hbar_\GG(v_n)]
\end{equation}
denote the correlation functions of the height fluctuations in the dimer model on $\GG$. 

Let a sequence of graphs~$\GG=\Omega_\delta$ approximate a planar domain~$\Omega\subset\C$ as \mbox{$\delta\to 0$}. In his pioneering works~\cite{kenyon-I,kenyon-II}, Kenyon considered the so-called \emph{Temperleyan} discretizations~$\Omega_\delta$ of $\Omega$ on the square grids~$\delta\Z^2$ and showed that in this case, (a) the correlation functions~\eqref{eq:Hn-def} converge to conformally invariant limits, and (b) the limit of height fluctuations is given by the Gaussian Free Field (GFF), with zero boundary conditions, provided that $\Omega$ is \emph{simply connected}. The boundaries of Temperleyan domains have a very special structure: the one that appears from the Temperley bijection with spanning trees. This allows one to use discrete complex analysis machinery in the proofs and makes the limit of \emph{dimer coupling functions} in~$\Omega_\delta$---that is, the entries of the inverse Kasteleyn matrices $K^{-1}_{\Omega_\delta}$ considered as functions on $W(\Omega_\delta)\times B(\Omega_\delta)$---conformally covariant. (Note that \cite{kenyon-I,kenyon-II} were among the foundational papers on the applications of discrete complex analysis to conformally invariant lattice models; see~\cite{smirnov-icm06,smirnov-icm10} and references therein.) Similar proofs appeared for \emph{monochromatic} piecewise-Temperleyan boundaries in~\cite{russkikh-pwTemperley} and for another class of discrete domains related to the Ising model instead of spanning trees; see~\cite[Section~3.1]{dubedat-bosonization} and~\cite{russkikh-hedgehog}. In all of these cases, the limits of dimer coupling functions are holomorphic in~$\Omega$ and have \emph{conformally covariant} boundary conditions that differ from case to case.

For general boundaries of~$\Omega_\delta$ and/or more general weights, the limit of height fluctuations~$\hbar_{\Omega_\delta}$ is also predicted to be Gaussian but with a much more complicated covariance function. Namely, the famous \emph{Kenyon--Okounkov conjecture} states that this limit is the GFF in a non-trivial complex structure determined from the limit of $\delta\E[h_{\Omega_\delta}]$ as $\delta\to 0$, called the \emph{limit shape} of $h_{\Omega_\delta}$. We refer the reader to~\cite{kenyon-okounkov} and to~\cite[Section~11]{gorin-book} for more details. In its general form, the Kenyon--Okounkov conjecture remains widely open. A striking example of a setup in which the convergence of height fluctuations is \emph{not} proved is given by domains $\Omega_\delta\subset\delta\Z^2$ composed of $2\times 2$ blocks. (Note that in this case the limit should be the GFF in the standard Euclidean complex structure of~$\Omega$.)

Another generalization appears if $\Omega$ is \emph{not} simply connected or when considering the dimer model on Riemann surfaces. In these situations, the limit of height fluctuations is predicted to be the sum of the GFF in~$\Omega$ with zero boundary conditions and a random harmonic function called the \emph{instanton component}. The latter function is constant on each boundary component of~$\Omega$ and has additive monodromy along non-contractible cycles on a Riemann surface. Being a random element of a finite-dimensional vector space, the instanton component is further predicted to have a multidimensional discrete Gaussian distribution; e.g., see~\cite[Conjecture 24.2]{gorin-book}. The first results of this kind were obtained in~\cite{boutillier-detilier-torus} and~\cite{dubedat-familiesCR,dubedat-ghessari} on the torus. Developing Dub\'edat's approach, Basok in~\cite{basok-dimers-riemann-surfaces} showed the convergence of the instanton component to a multidimensional Gaussian distribution for special discretizations of Riemann surfaces of arbitrary genus with conical singularities. This complements the results obtained in~\cite{berestycki-laslier-ray-II,berestycki-laslier-ray-III} on the universality of such a limit in the class of Temperleyan discretizations. Altogether, \cite{berestycki-laslier-ray-II,berestycki-laslier-ray-III} and~\cite{basok-dimers-riemann-surfaces} give a full description of the instanton component in the special case of monochromatic Temperleyan boundaries. Another proof of the convergence of the instanton component in Kenyon's original setup~\cite{kenyon-I} of multiply connected planar domains was given in~\cite{nicoletti-temperley}.

Let us emphasize that, in all cases in which the convergence of the height fluctuations is known, the proof goes through asymptotic analysis of the dimer coupling functions in~$\Omega_\delta$. (A notable exception is the set of universality results for the winding of the UST model obtained in~\cite{berestycki-laslier-ray-I,berestycki-laslier-ray-II,berestycki-laslier-ray-III} that can be formulated in terms of the dimer model with Temperleyan boundaries.) At the same time, it is well known that dimer coupling functions heavily depend on the structure of the boundaries of~$\Omega_\delta$ and hence \emph{cannot} have a single subsequential limit in many setups of interest, e.g., for generic discrete domains on~$\delta\Z^2$ composed of $2\times 2$ blocks mentioned above.
In the simply connected case, more robust arguments were suggested in a recent work~\cite{clrI}, which emphasizes the importance of uniform boundedness of dimer coupling functions as $\delta\to 0$ rather than their convergence. In the same vein, a geometric interpretation of the complex structure in the Kenyon--Okounkov conjecture from the viewpoint of modern discrete complex analysis was suggested in an unpublished preprint~\cite{clrII} by the same authors. However, since then, not much progress in this direction has been made besides confirming the predictions of~\cite{chelkak-ramassamy,clrII} in a few special cases~\cite{berggren-nicoletti-russkikh-aztec,berggren-nicoletti-russkikh-hexagon,berggren-nicoletti-russkikh-azteccusp} using integrable probability techniques from~\cite{petrov-hexagon,bufetov-gorin-duke,berggren-borodin,berggren-nicoletti} that prove the Kenyon--Okounkov conjecture directly in these cases.

In this paper we study a \emph{doubly connected} setup in which the fluctuations converge to the GFF in the standard complex structure on~$\Omega$ plus a one-dimensional instanton component. The main interest of our setup---compared for instance to~\cite{kenyon-I,nicoletti-temperley}---is that, even though the dimer correlation functions remain uniformly bounded as $\delta\to 0$ and have holomorphic limits, these limits are \emph{not} conformally covariant. In the spirit of the aforementioned paper~\cite{clrI}, we deduce that the instanton component has discrete Gaussian distribution and is independent of the GFF component ``from general principles,'' that is, using only the uniform boundedness of the dimer correlation functions rather than an explicit analysis of their limit as $\delta\to 0$. To the best of our knowledge, this is the first example of its kind in the literature. 

After the results presented in this paper circulated in the community, these ideas were greatly generalized by Basok in~\cite{basok-kenyon-identities}, who demonstrated the inevitable appearance of the discrete Gaussian distribution of the instanton component for general multiply connected domains and Riemann surfaces. Similar ideas can be also applied to generalizing~\cite[Theorem~1.4]{clrII} to situations in which the limit of discrete t-surfaces is a maximal surface in $\R^{2,2}$ with cusps.

\subsection{Setup and convergence of the dimer coupling functions} 
Consider a sequence of graphs $\GG=\Omega_\delta$, $\delta\to 0$, that approximate (in the Carath\'eodory sense) a bounded, doubly connected, \emph{cylindrical domain} $\Omega\subset \R/\Z\times\R$ with non-contractible boundary components $\dd_\bot\Omega$ and $\dd_\top\Omega$. Each $\Omega_\delta$ is a subset of the square grid with mesh size $\delta$ such that $\delta^{-1}\in 2\mathbb\N$, and we assume that the discrete boundary components~$\dd_\bot\Omega_\delta$ and~$\dd_\top\Omega_\delta$ have Temperleyan structure. (In our convention,~$\dd_\bot\Omega_\delta$ and $\dd_\top\Omega_\delta$ are \textit{not} part of $\Omega_\delta$; see Section \ref{subsec:K-1dischol} and Fig.~\ref{fig:BWTempDomain} for details.) More precisely, let $B_0,B_1$ (resp.,~$W_0,W_1$) be two classes of black (resp., white) vertices defined as follows: the {vertical} neighbors of a vertex of type $B_0$ (resp., $B_1$) are of type $W_0$ (resp., $W_1$), and the {horizontal} neighbors of a vertex of type $B_0$ (resp., $B_1$) are of type $W_1$ (resp., $W_0$); see Fig.~\ref{fig:BWTempDomain}. (Note that this convention differs from the one used by Kenyon in~\cite{kenyon-I}; we choose it in order to have a better fit with a more recent framework developed in~\cite{clrI}. The two conventions agree if one rotates the lattices by $90$ degrees.) We say that $\Omega_\delta$ is a \emph{black-and-white Temperleyan} cylinder if
\begin{itemize}
\item all corners of $\Omega_\delta$ that are adjacent to $\dd_\bot\Omega_\delta$ are black and, moreover, have type $B_1$;
\item all corners of $\Omega_\delta$ that are adjacent to $\dd_\top\Omega_\delta$ are white and, moreover, have type $W_1$; 
\end{itemize}
see Fig.~\ref{fig:BWTempDomain}. It is not hard to see that~$\Omega_\delta$ contains equal number of black and white vertices: e.g., one can compare it with a cylindrical domain having straight horizontal boundary components.

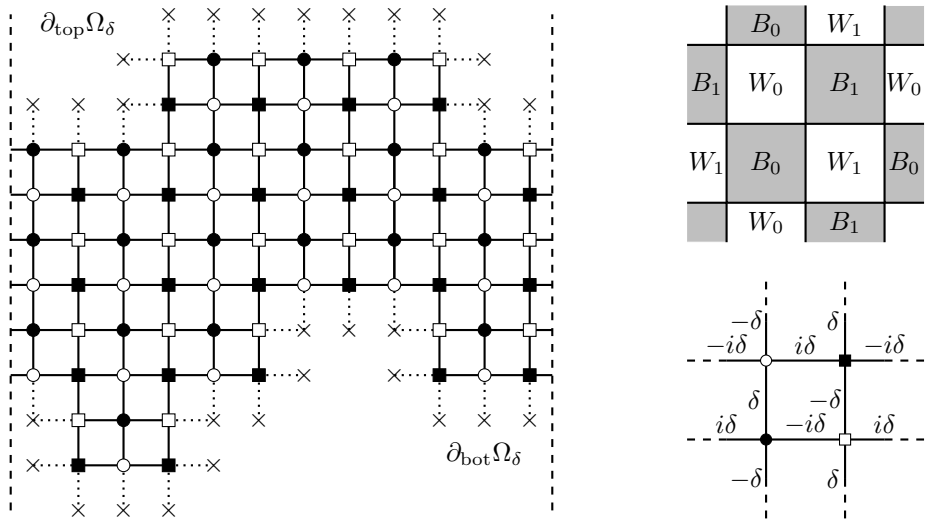
\begin{figure}
    \centering
    \begin{adjustbox}{raise=-0.125cm}
    \begin{tikzpicture}[scale=0.6]
    
    \draw[thick] (1,-2) -- (3,-2);
    \draw[thick] (1,-1) -- (3,-1);
    \draw[thick] (3,6) -- (9,6);
    \draw[thick] (3,7) -- (9,7);
    \foreach \y in {0,1}{
        \draw[thick] (-0.5,\y) -- (5,\y);
        \draw[thick] (9,\y) -- (11.5,\y);
    }
    \foreach \y in {2,3,4,5}{
        \draw[thick] (-0.5,\y) -- (11.5,\y);
    }
    \foreach \x in {0,10,11}{
        \draw[thick] (\x,0) -- (\x,5);
    }
    \foreach \x in {1,2}{
        \draw[thick] (\x,-2) -- (\x,5);
    }
    \draw[thick] (3,-2) -- (3,7);
    \foreach \x in {4,5,9}{
        \draw[thick] (\x,0) -- (\x,7);
    }
    \foreach \x in {6,7,8}{
        \draw[thick] (\x,2) -- (\x,7);
    }
    \draw[thick] (8,2) -- (8,5);
    \draw[thick,dashed] (-0.5,-3) -- (-0.5,8);
    \draw[thick,dashed] (11.5,-3) -- (11.5,8);

    \foreach \x in {0,4,5,9,10,11}{
        \draw[thick,dotted] (\x,0) -- (\x,-1);
    }
    \foreach \x in {1,2,3}{
        \draw[thick,dotted] (\x,-2) -- (\x,-3);
    }
    \foreach \x in {6,7,8}{
        \draw[thick,dotted] (\x,1) -- (\x,2);
    }
    \foreach \x in {0,3}{
        \foreach \y in {-2,-1}{
            \draw[thick,dotted] (\x,\y) -- (\x+1,\y);
        }
    }
    \foreach \x in {5,8}{
        \foreach \y in {0,1}{
            \draw[thick,dotted] (\x,\y) -- (\x+1,\y);
        }
    }
    \foreach \x in {0,1,2,10,11}{
        \draw[thick,dotted] (\x,5) -- (\x,6);
    }
    \foreach \x in {3,4,5,6,7,8,9}{
        \draw[thick,dotted] (\x,7) -- (\x,8);
    }
    \foreach \x in {2,9}{
        \foreach \y in {6,7}{
            \draw[thick,dotted] (\x,\y) -- (\x+1,\y);
        }
    }
    
    \foreach \x in {0,2,4,10}{
        \foreach \y in {0}{
            \filldraw[fill=white] (\x,\y) circle (4pt);
            \filldraw (\x,\y+1) circle (4pt);
        }
    }
    \foreach \x in {0,2,4,8,10}{
        \foreach \y in {0}{
            \filldraw ([shift={(-4pt,-4pt)}]\x+1,\y) rectangle ([shift={(4pt,4pt)}]\x+1,\y);
            \filldraw[fill=white] ([shift={(-4pt,-4pt)}]\x+1,\y+1) rectangle ([shift={(4pt,4pt)}]\x+1,\y+1);
        }
    }
    \foreach \x in {0,2,4,6,8,10}{
        \foreach \y in {2,4}{
            \filldraw[fill=white] (\x,\y) circle (4pt);
            \filldraw ([shift={(-4pt,-4pt)}]\x+1,\y) rectangle ([shift={(4pt,4pt)}]\x+1,\y);
            \filldraw (\x,\y+1) circle (4pt);
            \filldraw[fill=white] ([shift={(-4pt,-4pt)}]\x+1,\y+1) rectangle ([shift={(4pt,4pt)}]\x+1,\y+1);
        }
    }
    
    \filldraw[fill=white] (2,-2) circle (4pt);
    \filldraw[fill=white] ([shift={(-4pt,-4pt)}]1,-1) rectangle ([shift={(4pt,4pt)}]1,-1);
    \filldraw[fill=white] ([shift={(-4pt,-4pt)}]3,-1) rectangle ([shift={(4pt,4pt)}]3,-1);
    \filldraw (2,-1) circle (4pt);
    \filldraw ([shift={(-4pt,-4pt)}]1,-2) rectangle ([shift={(4pt,4pt)}]1,-2);
    \filldraw ([shift={(-4pt,-4pt)}]3,-2) rectangle ([shift={(4pt,4pt)}]3,-2);

    \foreach \x in {3,5,7,9}{
        \filldraw ([shift={(-4pt,-4pt)}]\x,6) rectangle ([shift={(4pt,4pt)}]\x,6);
        \filldraw[fill=white] ([shift={(-4pt,-4pt)}]\x,7) rectangle ([shift={(4pt,4pt)}]\x,7);
    }
    \foreach \x in {4,6,8}{
        \filldraw[fill=white] (\x,6) circle (4pt);
        \filldraw (\x,7) circle (4pt);
    }
    
    \foreach \x in {0,4,5,9,10,11}{
        \node at (\x,-1) {\small $\times$};
    }
    \foreach \x in {1,2,3}{
        \node at (\x,-3) {\small $\times$};
    }
    \foreach \x in {6,7,8}{
        \node at (\x,1) {\small $\times$};
    }
    \node at (0,-2) {\small $\times$};
    \node at (4,-2) {\small $\times$};
    \foreach \x in {0,1,2,10,11}{
        \node at (\x,6) {\small $\times$};
    }
    \foreach \x in {3,4,5,6,7,8,9}{
        \node at (\x,8) {\small $\times$};
    }
    \node at (6,0) {\small $\times$};
    \node at (8,0) {\small $\times$};
    \node at (2,7) {\small $\times$};
    \node at (10,7) {\small $\times$};
    
    \node at (10,-1.75){\small $\dd_{\textnormal{bot}}\Omega_\delta$};
    \node at (1,7.75){\small $\dd_{\textnormal{top}}\Omega_\delta$};
    
    \end{tikzpicture}
    \end{adjustbox}
    \qquad \qquad
    \begin{tikzpicture}[scale=0.525]
    
    \filldraw[fill=gray!50, thick] (0,0) -- (2,0) -- (2,2) -- (0,2) -- cycle;
    \filldraw[fill=gray!50, thick] (2,2) -- (2,4) -- (4,4) -- (4,2) -- cycle;
    \filldraw[fill=gray!50, thick] (2,-1) -- (2,0) -- (4,0) -- (4,-1);
    \filldraw[fill=gray!50, thick] (0,5) -- (0,4) -- (2,4) -- (2,5);
    \filldraw[fill=gray!50, thick] (-1,2) -- (0,2) -- (0,4) -- (-1,4);
    \filldraw[fill=gray!50, thick] (5,0) -- (4,0) -- (4,2) -- (5,2);
    \filldraw[fill=gray!50, draw=white, ultra thin] (-1,-1) -- (0,-1) -- (0,0) -- (-1,0);
    \draw[thick] (-1,0) -- (0,0) -- (0,-1);
    \filldraw[fill=gray!50, draw=white, ultra thin] (5,5) -- (4,5) -- (4,4) -- (5,4);
    \draw[thick] (5,4) -- (4,4) -- (4,5);

    \node at (1,-0.5){\small $W_0$};
    \node at (3,-0.5){\small $B_1$};
    \node at (-0.5,1){\small $W_1$};
    \node at (1,1){\small $B_0$};
    \node at (3,1){\small $W_1$};
    \node at (4.5,1){\small $B_0$};
    \node at (-0.5,3){\small $B_1$};
    \node at (1,3){\small $W_0$};
    \node at (3,3){\small $B_1$};
    \node at (4.5,3){\small $W_0$};
    \node at (1,4.5){\small $B_0$};
    \node at (3,4.5){\small $W_1$};

    \draw[thick] (0,-6) -- (4,-6);
    \draw[thick] (0,-4) -- (4,-4);
    \draw[thick] (1,-7) -- (1,-3);
    \draw[thick] (3,-7) -- (3,-3);
    \draw[thick,dashed] (-1,-6) -- (0,-6);
    \draw[thick,dashed] (4,-6) -- (5,-6);
    \draw[thick,dashed] (-1,-4) -- (0,-4);
    \draw[thick,dashed] (4,-4) -- (5,-4);
    \draw[thick,dashed] (1,-8) -- (1,-7);
    \draw[thick,dashed] (1,-3) -- (1,-2);
    \draw[thick,dashed] (3,-8) -- (3,-7);
    \draw[thick,dashed] (3,-3) -- (3,-2);
    
    \filldraw (1,-6) circle (4pt);
    \filldraw ([shift={(-4pt,-4pt)}]3,-4) rectangle ([shift={(4pt,4pt)}]3,-4);
    \filldraw[fill=white] (1,-4) circle (4pt);
    \filldraw[fill=white] ([shift={(-4pt,-4pt)}]3,-6) rectangle ([shift={(4pt,4pt)}]3,-6);

    \node at (0.5,-7){\small $-\delta$};
    \node at (0.7,-5){\small $\delta$};
    \node at (0.5,-3){\small $-\delta$};
    \node at (2.7,-7){\small $\delta$};
    \node at (2.5,-5){\small $-\delta$};
    \node at (2.7,-3){\small $\delta$};
    \node at (0,-5.6){\small $i\delta$};
    \node at (2,-5.6){\small $-i\delta$};
    \node at (4,-5.6){\small $i\delta$};
    \node at (0,-3.6){\small $-i\delta$};
    \node at (2,-3.6){\small $i\delta$};
    \node at (4,-3.6){\small $-i\delta$};
    \end{tikzpicture}

    \caption{\textsc{Left:} An example of a black-and-white Temperleyan cylinder $\Omega_\delta$. Of the black (resp., white) vertices, the circular nodes are of type $B_0$ (resp., $W_0$), and the square nodes are of type $B_1$ (resp., $W_1$).  The boundary $\dd\Omega_\delta:=\dd_\bot\Omega_\delta\cup\dd_\top\Omega_\delta$ of~$\Omega_\delta$ consists of the vertices marked with $\times$ which are \textit{not} part of the domain itself.\\ \textsc{Top-right:} A piece of the dual graph $\Omega_\delta^*$ with square faces of types $B_0,B_1,W_0,W_1$. \\ \textsc{Bottom-right:} The Kasteleyn weights 
    on edges of the corresponding piece of $\Omega_\delta$.}
    \label{fig:BWTempDomain}
    \label{fig:kasteleyn-weights}
\end{figure}

\begin{rema}
  One can change the type of the black-Temperleyan discretization of $\dd_\bot\Omega_\delta$ or the type of the white-Temperleyan discretization of $\dd_\top\Omega_\delta$ by replacing $B_1$ with $B_0$ or~$W_1$ with $W_0$ in the combinatorial definition given above. This leads to replacing the real/imaginary part in the boundary conditions of Problem~\ref{bvp} formulated below by the imaginary/real part. Our results hold for all such choices of black-and-white Temperleyan discretizations modulo straightforward modifications even though these boundary value problems are \emph{not} equivalent to each other.
\end{rema}

From the combinatorial perspective, $\Omega_\delta$ can be viewed as a planar domain if one adds to it a face bounded by the top boundary and treats the bottom boundary as the single (outer) boundary component. Similarly to~\cite{kenyon-I}, we introduce the \emph{Kasteleyn matrix} $K_{\Omega_\delta}:\C^{W(\Omega_\delta)}\to\C^{B(\Omega_\delta)}$ with entries $\delta,i\delta,-\delta,-i\delta$ on edges of $\Omega_\delta$ adjacent to a white vertex and listed counterclockwise so that $K(b,w)=\delta$ if $b$ lies below $w$; see Fig.~\ref{fig:kasteleyn-weights}. This choice agrees with that used in the \emph{t-embeddings} framework developed in~\cite{clrI}: if $\TT_\delta$ denotes the embedding of the dual graph, then the nonzero entries of the Kasteleyn matrix can be written as $K_{\Omega_\delta}(b,w)=d\TT_\delta((bw)^*)$, where $(bw)^*$ is the edge dual to $(bw)$ oriented so that $b$ is on the right. Locally, this is a well-defined choice of Kasteleyn signs: the alternating product around a square face is $-1$; see~\cite[Section 3]{kenyon-lectures-on-dimers}. However, letting $d=2k$ be the degree of the top face, the alternating product around this face equals $(-1)^k$ rather than $(-1)^{k+1}$.  One way to fix this is to introduce a vertical ``cut'' of the cylinder: any edges which pass through this vertical cut would have its Kasteleyn sign multiplied by $-1$. An alternative approach, which is the one we adopt, is to keep the Kasteleyn weights unchanged and to introduce an appropriate \textit{double cover} of $\Omega_\delta$ instead, so that the \emph{dimer coupling function} $K^{-1}_{\Omega_\delta}(w,b)$ gets multiplicative monodromy $-1$ when either $w$ or $b$ makes a full turn around the cylinder. By abuse of notation, we will still denote this double cover by $\Omega_\delta$ and will work with functions whose values on the two sheets are opposite each other. We call such functions \emph{antiperiodic} and use the same terminology for $\Omega$ itself.

Our first result concerns the limits of the dimer coupling functions $K^{-1}_{\Omega_\delta}$ as $\delta\to 0$. Let us consider the following Riemann-type boundary value problem:
\begin{prob}\label{bvp}
    Given $\eta\in\C$ and a point $z_1$ on the double cover of $\Omega$, find an antiperiodic holomorphic function $f_\Omega^{[\eta]}(z_1,\,\cdot\,):\Omega\smallsetminus\{z_1\}\to\C$ such that
\begin{itemize}
\item it has asymptotics $\displaystyle
        f_\Omega^{[\eta]}(z_1,z_2)=\frac{1}{\pi i}\cdot\frac{\cc\eta}{z_2-z_1}+O(1)$ as $z_2\to z_1$;
\item the function $\Re[f_\Omega^{[\eta]}(z_1,\,\cdot\,)]$ is continuous up to $\dd_\bot\Omega$ and $\Re[f_\Omega^{[\eta]}(z_1,z_2)]=0$ if $z_2\in\dd_\bot\Omega$; 
\item one can define (the imaginary part of) a primitive $g_\Omega^{[\eta]}(z_1,\,\cdot\,)$ of $f_\Omega^{[\eta]}(z_1,\,\cdot\,)$ near $\dd_\top\Omega$ so that $\Im[g_\Omega^{[\eta]}(z_1,\,\cdot\,)]$ is continuous up to $\dd_\top\Omega$ and $\Im [g_\Omega^{[\eta]}(z_1,z_2)]=0$ if $z_2\in\dd_\top\Omega$\,.
    \end{itemize}
\end{prob}

\begin{rema} We claim that there exists a unique solution $f_\Omega^{[\eta]}$ to Problem \ref{bvp}. Uniqueness is discussed in Section~\ref{subsec:uniqueness} and existence follows from Theorem~\ref{Kdconvergencetheorem} formulated below and proved in Section~\ref{subsec:Kdconvergence}; see Corollary~\ref{existence}. While the uniqueness result is used to prove existence, we remark that the proof of uniqueness is independent of the results of Section~\ref{section:convergence}. 
\end{rema}

\begin{thrm}\label{Kdconvergencetheorem}
    Fix $\mathfrak{w},\mathfrak{b}\in\{0,1\}$.  Let $w_\delta\in W_\mathfrak{w}(\Omega_\delta)$, $b_\delta\in B_\mathfrak{b}(\Omega_\delta)$ be sequences of white and black vertices of types $W_\mathfrak{w}$ and $B_\mathfrak{b}$, respectively, such that $w_\delta\to z_1\in\Omega$ and $b_\delta\to z_2\in\Omega$ as $\delta\to0$. Then
    \[
    K_{\Omega_\delta}^{-1}(w_\delta,b_\delta)\to \eta_\mathfrak{w}\eta_\mathfrak{b}\Re\big[\,\overline{\eta}_\mathfrak{b} f_\Omega^{[\eta_\mathfrak{w}]}(z_1,z_2)\,\big]\ \ \text{as}\ \ \delta\to 0\,,
    \]
    where $\eta_0:=1$ and $\eta_1:=i$. Moreover, this convergence is uniform provided that $z_1,z_2$ stay at a definite distance from each other and from the boundary of $\Omega$.
\end{thrm}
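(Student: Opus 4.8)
The plan is to follow the standard discrete complex analysis scheme---realize $K^{-1}_{\Omega_\delta}(w_\delta,\cdot)$ as a discrete holomorphic function with an isolated point source, extract a subsequential limit via uniform a priori bounds, and identify that limit as the unique solution of Problem~\ref{bvp}---carried out in the t-embedding framework of~\cite{clrI} and adapted to the doubly connected, antiperiodic, bicolored setting. Concretely, fixing $w_\delta$ and writing $F_\delta(b):=K^{-1}_{\Omega_\delta}(w_\delta,b)$, the identity $K^{-1}_{\Omega_\delta}K_{\Omega_\delta}=\mathrm{Id}$ reads $\sum_{b\sim w'}F_\delta(b)K_{\Omega_\delta}(b,w')=\1[w'=w_\delta]$ for each white $w'$; since the four weights around a white vertex are $\delta,i\delta,-\delta,-i\delta$, this is a discrete Cauchy--Riemann equation at every $w'\neq w_\delta$ and a point source at $w_\delta$. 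The Temperleyan structure of $\dd_\bot\Omega_\delta$ and $\dd_\top\Omega_\delta$ makes $F_\delta$, on the relevant sublattice, the discrete derivative of a discrete harmonic function, and the $B_1$-corners along $\dd_\bot\Omega_\delta$ versus the $W_1$-corners along $\dd_\top\Omega_\delta$ impose the Dirichlet-type and the conjugate (Neumann-type) condition on this primitive, respectively---precisely the two boundary conditions of Problem~\ref{bvp}. Subtracting the full-plane coupling function on $\delta\Z^2$ as a parametrix (its asymptotics are classical and reproduce the $\tfrac{1}{\pi i}\overline\eta_{\mathfrak w}(z_2-z_1)^{-1}$ term) leaves a function that is discrete holomorphic throughout $\Omega_\delta$ and antiperiodic on the double cover.

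The analytic core is the uniform bound $|K^{-1}_{\Omega_\delta}(w_\delta,b_\delta)|\le C$, valid whenever $z_1,z_2$ stay at a definite distance from each other and from $\dd\Omega$, uniformly in $\delta$. I would derive it from the maximum principle for the discrete harmonic primitive together with the boundary combinatorics above and the Carath\'eodory convergence $\Omega_\delta\to\Omega$, using the discrete Harnack and regularity estimates of~\cite{clrI} for t-embeddings of $\delta\Z^2$ once they are adapted to the cylinder. Uniform boundedness plus discrete elliptic regularity gives equicontinuity of $F_\delta$ on compact subsets of $\Omega\smallsetminus\{z_1\}$, hence precompactness; every subsequential limit is then a genuine holomorphic, antiperiodic function of $z_2$ on $\Omega\smallsetminus\{z_1\}$ with a simple pole of residue $\tfrac{1}{\pi i}\overline\eta_{\mathfrak w}$ at $z_1$.

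Given a subsequential limit, I would pass the two discrete boundary relations to the limit: continuity up to $\dd\Omega$---via a discrete barrier / Beurling-type estimate---shows that the limit (after multiplying by the power of $i$ dictated by $\mathfrak w$) has vanishing real part on $\dd_\bot\Omega$ and that the imaginary part of a primitive vanishes on $\dd_\top\Omega$, so it solves Problem~\ref{bvp} with $\eta=\eta_{\mathfrak w}$. By the uniqueness of that solution (Section~\ref{subsec:uniqueness}, whose proof is independent of this argument), the subsequential limit is unique and the whole family converges; the local uniformity in $(z_1,z_2)$ is inherited from the uniform bound. Finally, unwinding the dependence on the types $\mathfrak w,\mathfrak b$---a white vertex of type $W_{\mathfrak w}$ contributes the phase $\eta_{\mathfrak w}$ and a black vertex of type $B_{\mathfrak b}$ intertwines the real part with $\overline\eta_{\mathfrak b}$---yields exactly $\eta_{\mathfrak w}\eta_{\mathfrak b}\Re[\,\overline\eta_{\mathfrak b}f_\Omega^{[\eta_{\mathfrak w}]}(z_1,z_2)\,]$.

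The main obstacle I anticipate is the uniform a priori bound together with the up-to-the-boundary regularity, because $\dd_\bot\Omega$ and $\dd_\top\Omega$ carry genuinely different (Dirichlet- versus Neumann-type) conditions coming from their different colors: one cannot reduce to a single reflected problem and must control the discrete harmonic primitive---and its multivaluedness around the cylinder---against both conditions simultaneously. Tracking the antiperiodicity carefully through these estimates is also essential, as it is exactly what rules out a spurious additive drift in the limit of $K^{-1}_{\Omega_\delta}$ itself: in the periodic class such a drift could be a multiple of the derivative of the harmonic measure of a boundary component, but it is excluded both by antiperiodicity and by the uniqueness of Problem~\ref{bvp}.
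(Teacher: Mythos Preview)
Your overall architecture is correct and matches the paper's: interpret $F_w(b)=\overline\eta_w K^{-1}_{\Omega_\delta}(w,b)$ as a discrete holomorphic function with a point source, read off the two boundary conditions from the Temperleyan combinatorics (Dirichlet for $\Re F_w$ on $\dd_\bot\Omega_\delta$, Dirichlet for $\Im G_w$ on $\dd_\top\Omega_\delta$ where $G_w$ is the discrete primitive), subtract the full-plane coupling function to identify the singularity, extract subsequential limits, and pin them down via uniqueness in Problem~\ref{bvp}. The paper does exactly this.

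The gap is in the step you yourself flag as the main obstacle: the uniform a priori bound. You propose to get it ``from the maximum principle for the discrete harmonic primitive together with the boundary combinatorics,'' but this does not work directly. The difficulty is that neither $\Re F_w$ nor $\Im F_w$ individually satisfies a Dirichlet condition on \emph{both} boundary components, and the primitive $G_w$ is not globally single-valued (it has additive monodromy around $w$, see Remark~\ref{rem:Gw-def}), so there is no harmonic quantity to which a global maximum principle applies. The paper's device is to prove uniform boundedness by \emph{contradiction}: assume $\sup|F_{w_\delta}|\to\infty$ on a compact, rescale by the inverse of the maximum to get $\widetilde F_\delta$ with $\sup|\widetilde F_\delta|=1$, and then run the Beurling/Harnack estimates \emph{starting from this normalization} to propagate boundedness from the bulk toward each boundary component separately. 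A subsequential limit of $\widetilde F_\delta$ then solves Problem~\ref{bvp} with $\eta=0$ (the parametrix term is killed by the vanishing rescaling factor), hence is identically zero by Lemma~\ref{uniqueness}, contradicting the normalization. In other words, uniqueness is used \emph{twice}: once to obtain the uniform bound, and once more to identify the limit. Your proposal only invokes it for the second purpose, leaving the first genuinely open.
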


\begin{rema}
Unlike the setups considered in~\cite{kenyon-I,russkikh-pwTemperley,russkikh-hedgehog,nicoletti-temperley}, the solutions to Problem~\ref{bvp} are \emph{not} conformally covariant. Indeed, the boundary condition on $\dd_\bot\Omega$ requires \mbox{$f(z)=f^{[\eta]}_\Omega(z_1,z)$} to be a conformally invariant \emph{function} while the boundary condition on $\dd_\top\Omega$ respects conformally invariant \emph{differential forms} $f(z)\,dz$. This prevents us from giving a simple explicit formula for the solution. 
\end{rema}

\subsection{Convergence of height fluctuations} Since Kenyon's foundational papers~\cite{kenyon-I,kenyon-II}, it is well known that the convergence of dimer coupling functions (together with appropriate uniform estimates of $K^{-1}_{\Omega_\delta}$ near $\dd\Omega$) implies the convergence of height correlations~\eqref{eq:Hn-def}. This is due to the determinantal expression 
\begin{equation}
\label{eq:n-dimers-P}
\P[(b_1w_1),\ldots,(b_nw_n)\in \DD]\,=\,\det[K^{-1}_{\GG}(w_j,b_k)]_{j,k=1}^n\tprod_{k=1}^nK_{\GG}(b_k,w_k)
\end{equation}
for the probability that a collection of given edges $(b_1w_1),\ldots,(b_nw_n)\in E(\GG)$ appear in a random dimer cover of $\GG$. In order to formulate the corresponding convergence result, note that the uniqueness of solutions to Problem~\ref{bvp} implies the existence of two antiperiodic (in each of the variables $z_1,z_2$) functions $f_\Omega^{[\pm +]}:=f_\Omega^{[1]}\pm if_\Omega^{[i]}:\Omega\times\Omega\to\C$ such that
\begin{equation}
\label{eq:feta=fpmpm}
f_\Omega^{[\eta]}(z_1,z_2)=\tfrac12\big[\overline{\eta}f_\Omega^{[++]}(z_1,z_2)+\eta f_\Omega^{[-+]}(z_1,z_2)]\,, \quad \eta\in\C.
\end{equation}
Both $f^{[++]}$ and $f^{[-+]}$ are holomorphic functions of~$z_2$, and it is not hard to show that $f^{[++]}$ is also holomorphic as a function of~$z_1$ while $f^{[-+]}$ is antiholomorphic in $z_1$; see Section~\ref{subsec:fpmpmdef} for further comments. 
Let $f_\Omega^{[\pm-]}:=\cc{f}_\Omega^{[\mp+]}$ and define closed differential forms 
\begin{equation}\label{An}
    \AA_{n,\Omega}(z_1,\dots,z_n):=\frac{1}{4^n}\sum_{s_1,\dots,s_n\in\{\pm\}}\det[\1_{j\neq k}f_\Omega^{[s_j,s_k]}(z_j,z_k)]_{j,k=1}^n\prod_{k=1}^ndz_k^{[s_k]},\quad z_1,\dots,z_n\in\Omega\,,
\end{equation}
where $dz_k^{[+]}:=dz_k$ and $dz_k^{[-]}:=d\cc{z}_k$. Contrary to the functions $f^{[\pm\pm]}_\Omega$, the forms $\AA_n$ are defined on $\Omega$ itself and not on its double cover (i.e., are periodic rather than antiperiodic) since each term in the expansion of the determinant is a periodic function of each of~$z_k$.

Let $\hbar_{\Omega_\delta}$ be the fluctuations of the height functions defined on the dual graphs of $\Omega_\delta$. Recall that from the combinatorial perspective we treat $\dd_\bot\Omega_\delta$ as the outer boundary of $\Omega_\delta$ on which $\hbar_{\Omega_\delta}$ vanishes while $\dd_\top\Omega$ is viewed as the boundary of a large face on which $\hbar_{\Omega_\delta}$ has nontrivial random values. We use the notation $H_{n,\Omega_\delta}$ introduced above (see~\eqref{eq:Hn-def}) for the correlations of $\hbar_{\Omega_\delta}$.

\begin{thrm}\label{Hnconvergencetheorem}
    Let $v_1,\dots,v_n\in\cc\Omega$ and $v_1^0,\dots,v_n^0\in\dd_\bot\Omega$ be pairwise distinct $n$-tuples of points. For each $k=1,\dots,n$, let $v_{k,\delta}$ be a vertex of the dual graph to $\Omega_\delta$ such that $v_{k,\delta}\to v_k$ as $\delta\to 0$. Then,
    \begin{equation}\label{Hnconvergence}
        H_{n,\Omega_\delta}(v_{1,\delta},\dots,v_{n,\delta})\;\longrightarrow\;h_{n,\Omega}(v_1,\dots,v_n)\,:=\,
        \int_{v_1^0}^{v_1}\cdots\int_{v_n^0}^{v_n}\AA_{n,\Omega}(z_1,\dots,z_n)
    \end{equation}
    as $\delta\to 0$, where the integral does not depend on the choice of smooth integration paths from $v_k^0\in\dd_\bot\Omega$ to~$v_k\in\overline{\Omega}$ provided that these paths do not intersect with each other. The integral converges near $\dd_\bot\Omega$ and near $\dd_\top\Omega$ if these paths approach the boundary of $\Omega$ non-tangentially.
\end{thrm}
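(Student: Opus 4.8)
The plan is to follow the strategy of~\cite{kenyon-I,kenyon-II}: express $H_{n,\Omega_\delta}$ as a multiple discrete contour integral assembled from entries of $K^{-1}_{\Omega_\delta}$, pass to the limit via Theorem~\ref{Kdconvergencetheorem}, and identify the answer with $\int\!\cdots\!\int\AA_{n,\Omega}$. The point specific to the black-and-white setup is that the type ($0$ or $1$) of the crossed vertices alternates along the integration paths, and this alternation is exactly what converts the discrete sums against the Kasteleyn weights into the differentials $dz_k^{[s_k]}$ appearing in~\eqref{An}. \textbf{Step 1 (combinatorial reduction).} Since $h_{\Omega_\delta}$ is deterministic along $\dd_\bot\Omega_\delta$, we may write $\hbar_{\Omega_\delta}(v_{k,\delta})=\sum_e\epsilon_e(\1[e\in\DD]-\P[e\in\DD])$, the sum running over primal edges $e=(b_ew_e)$ crossed by a simple dual path $\gamma_{k,\delta}$ from a point of $\dd_\bot\Omega_\delta$ near $v_k^0$ to $v_{k,\delta}$, with $\epsilon_e=\pm1$ an orientation sign. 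Take the $\gamma_{k,\delta}$ pairwise disjoint and approximating fixed pairwise non-intersecting smooth paths $\gamma_k$ from $v_k^0$ to $v_k$. An inclusion--exclusion computation applied to~\eqref{eq:n-dimers-P} then gives the exact identity
\[
H_{n,\Omega_\delta}(v_{1,\delta},\dots,v_{n,\delta})=\sum_{e_1\in\gamma_{1,\delta}}\!\cdots\!\sum_{e_n\in\gamma_{n,\delta}}\Big(\tprod_{k=1}^n\epsilon_{e_k}K_{\Omega_\delta}(b_{e_k},w_{e_k})\Big)\det\!\big[\1_{j\neq k}K^{-1}_{\Omega_\delta}(w_{e_j},b_{e_k})\big]_{j,k=1}^n,
\]
in which the vanishing diagonal records $\E[\1[e\in\DD]-\P[e\in\DD]]=0$ and disjointness of the paths forces the $e_k$ to be distinct, so that~\eqref{eq:n-dimers-P} applies verbatim. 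The structural parallel with~\eqref{An} is already apparent.

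\textbf{Step 2 (passage to the limit).} Rewriting the limit in Theorem~\ref{Kdconvergencetheorem} by means of~\eqref{eq:feta=fpmpm}, for $w_\delta\in W_{\mathfrak w}$ and $b_\delta\in B_{\mathfrak b}$ with $w_\delta\to z$, $b_\delta\to z'$ staying at a definite distance from one another and from $\dd\Omega$, one has, uniformly,
\[
K^{-1}_{\Omega_\delta}(w_\delta,b_\delta)\ \longrightarrow\ \tfrac{1}{4}\sum_{s,t\in\{\pm\}}\chi^{\mathfrak w}(s)\,\chi^{\mathfrak b}(t)\,f^{[s,t]}_\Omega(z,z'),\qquad\chi^p(\pm):=(\pm1)^p.
\]
Inserting this into Step~1 and expanding, one is reduced to iterated discrete sums of the form $\sum_{e_k\in\gamma_{k,\delta}}\epsilon_{e_k}K_{\Omega_\delta}(b_{e_k},w_{e_k})\,\chi^{\mathfrak w(w_{e_k})}(s_k)\,\chi^{\mathfrak b(b_{e_k})}(t_k)\,G_\delta(z_k)$, where $G_\delta$ converges uniformly to a continuous $G$. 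The key lemma is that every such sum converges to $\1[s_k=t_k]\int_{\gamma_k}G\,dz_k^{[s_k]}$: the pair of types alternates along $\gamma_{k,\delta}$, so for $s_k\neq t_k$ the character factor changes sign between consecutive terms and the sum is $O(\delta)$; for $s_k=t_k$ the character factor is $+1$ on the segments of $\gamma_{k,\delta}$ crossing vertical primal edges and, on the segments crossing horizontal primal edges, equals $+1$ if $s_k=+$ and $-1$ if $s_k=-$, and since the former segments carry the real Kasteleyn weights $\pm\delta$ and the latter the imaginary weights $\pm i\delta$, the sum becomes a Riemann sum for $\int_{\gamma_k}G\,dz_k$ when $s_k=+$ and for $\int_{\gamma_k}G\,d\bar z_k$ when $s_k=-$. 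Carrying this through all $n$ integrations forces $t_k=s_k$ for every $k$, reassembles the zero-diagonal determinant, and produces $\int_{\gamma_1}\!\cdots\!\int_{\gamma_n}\tfrac{1}{4^n}\sum_{\vec s}\det[\1_{j\neq k}f^{[s_j,s_k]}_\Omega(z_j,z_k)]\prod_kdz_k^{[s_k]}$, i.e.\ $\int_{v_1^0}^{v_1}\!\cdots\!\int_{v_n^0}^{v_n}\AA_{n,\Omega}$.

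\textbf{Step 3 (path-independence and boundary behavior).} Independence of the limit of the choice of non-intersecting integration paths, and of the choice of $v_k^0\in\dd_\bot\Omega$, follows from closedness of $\AA_{n,\Omega}$ together with a residue analysis of its singularities along the diagonals $z_j=z_k$, inherited from the simple poles of $f^{[\cdot,\cdot]}_\Omega$. The convergence of the integral under non-tangential approach to $\dd_\bot\Omega\cup\dd_\top\Omega$ reduces to the boundary behavior of $f^{[\eta]}_\Omega$ prescribed in Problem~\ref{bvp}: vanishing of $\Re f^{[\eta]}_\Omega$ on $\dd_\bot\Omega$ and of the imaginary part of its primitive on $\dd_\top\Omega$ make $\AA_{n,\Omega}$ integrable along such paths.

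\textbf{Main obstacle.} The technical core is the discrete contour-integral lemma of Step~2: carrying out the sign-cancellation rigorously --- in particular near the finitely many turns of the dual paths --- and bookkeeping the Kasteleyn signs so as to land precisely on $dz_k^{[s_k]}$. A second delicate point is that Theorem~\ref{Kdconvergencetheorem} controls $K^{-1}_{\Omega_\delta}$ only away from $\dd\Omega$, whereas Step~1 sums over edges up to $\dd_\bot\Omega$ and, once $v_k$ is moved to the boundary, up to $\dd_\top\Omega$; one therefore needs a priori near-boundary estimates on $K^{-1}_{\Omega_\delta}$ --- of the kind underpinning the proof of Theorem~\ref{Kdconvergencetheorem} --- both to justify the interchange of limit and summation and to obtain the convergence of the boundary-approaching integrals.
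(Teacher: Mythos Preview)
Your proposal is correct and follows essentially the same route as the paper. The paper also writes $H_{n,\Omega_\delta}$ as a multiple discrete contour sum via~\eqref{eq:n-dimers-P}, passes to the limit using the convergence of $K^{-1}_{\Omega_\delta}$, and invokes near-boundary estimates (stated there as Lemma~\ref{lemma:K-1-Beurling-estimate}, giving $K^{-1}_{\Omega_\delta}=O(\dist^{\beta-1})$) to control the contributions close to $\dd\Omega$---precisely the second obstacle you flag.

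The one methodological difference is in your Step~2. Rather than carrying out the sign-cancellation/Riemann-sum lemma by hand on the square grid, the paper repackages $K^{-1}_{\Omega_\delta}$ via the t-embeddings functions $F^{[\pm\pm]}_{\Omega_\delta}$ of~\eqref{eq:K-1=Fpmpm} and then appeals to the ``small origami'' mechanism of~\cite{clrI}: the terms carrying factors $\eta_b^2\,d\TT_\delta$ or $\eta_w^2\,d\TT_\delta$ vanish in the limit, and only $d\TT_\delta$ and $\eta_b^2\eta_w^2\,d\TT_\delta=d\overline{\TT_\delta}$ survive, which is exactly your statement that $s_k\ne t_k$ contributes $O(\delta)$ while $s_k=t_k$ yields $\int G\,dz_k^{[s_k]}$. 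The paper itself notes that this machinery is not strictly necessary and can be replaced by direct square-grid computations \`a la~\cite{kenyon-I}, which is what you outline. Your approach is more elementary and self-contained; the paper's buys uniformity of language with the general t-embedding framework.

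One small simplification for your Step~3: path-independence of the limiting integral need not be argued via closedness and residues. Since $H_{n,\Omega_\delta}$ is intrinsically defined (independent of the discrete paths $\gamma_{k,\delta}$), and the convergence in Step~2 holds for \emph{any} family of non-intersecting paths, the limit automatically agrees for all such choices. Also, for the integrability near $\dd\Omega$ you should rely on the quantitative $O(\dist^{\beta-1})$ estimate rather than the qualitative boundary conditions of Problem~\ref{bvp}; the latter alone do not give a rate.
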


In principle, Theorem~\ref{Hnconvergencetheorem} provides the convergence of $\hbar_{\Omega_\delta}$ to an explicit random distribution, at least in the sense of correlation functions. However, in lack of simple explicit formulas for the functions $f^{[\pm\pm]}_\Omega$, the characterization of this limit is not straightforward.
We now state the main result of this paper. Let the conformal modulus of $\Omega$ equal~$\pi/\cm$. (In other words, assume that~$\Omega$ is conformally equivalent to a straight cylinder~$\R/\cm\Z\times(0,\pi)$ or, equivalently, to an annulus $B(0,R)\smallsetminus B(0,r)$ with $\log(R/r)=2\pi^2\cm^{-1}$.) Let $\xi_\mu$ be a \emph{centered} discrete Gaussian random variable of parameters $\mu\in\R/\Z$ and $\cm^{-1}\in\R_+$, which means that $\xi_\mu$ has distribution
\begin{equation}
\label{eq:discrGauss}
\P[\,\xi_\mu=k-a_\mu\,]\,=\,Z_\mu^{-1}e^{-\frac12\cm(k-\mu)^2},\quad k\in\mathbb\Z\,,
\end{equation}
where $Z_\mu$ is a normalizing constant and the shift $a_\mu$ is chosen so that $\E\xi_\mu=0$. (Note that $\xi_{\mu+1}$ and~$\xi_\mu$ have the same distribution.) Let $M_n(\mu):=\E \xi_\mu^n$ be the moments of $\xi_\mu$ and 
\[
M_{n,\Omega}\,:=\,h_{n,\Omega}\big|_{\dd_\top\Omega\times\dots\times\dd_\top\Omega}\,.
\]
It follows from Theorem~\ref{Hnconvergencetheorem} that $M_{n,\Omega}$ is the limit of the $n$th moments of the height fluctuations of the top boundary of~$\Omega_\delta$. In particular, the value $h_{n,\Omega}(v_1,\ldots,v_n)$ does not depend on $v_k\in\dd_\top\Omega$.

\begin{thrm}\label{mainthrm} (i) In the same setup, there exists $\mu\in\R/\Z$ such that $M_{n,\Omega}=M_n(\mu)$ for all $n\in\N$.

\noindent (ii) At least in the sense of convergence of multi-point correlation functions~\eqref{Hnconvergence}, we have 
\begin{equation}
\label{eq:GFF+disc}
\hbar_{\Omega_\delta}\ \overset{\mathrm{(d)}}{\longrightarrow}\ \pi^{-1/2}\textnormal{GFF}_\Omega+\xi_\mu\cdot\hm_\Omega(\,\cdot\,;\dd_\top\Omega),
\end{equation}
where $\mathrm{GFF}_\Omega$ denotes the Gaussian Free Field in $\Omega$ with Dirichlet boundary conditions on both boundary components, $\xi_\mu$ is an independent centered discrete Gaussian random variable with distribution given by~\eqref{eq:discrGauss}, and~$\hm_\Omega(\,\cdot\,;\dd_\top\Omega)$ is the harmonic measure of the top boundary, i.e., the unique harmonic function in $\Omega$ that vanishes on $\dd_\bot\Omega$ and equals $1$ on $\partial_\top\Omega$.
\end{thrm}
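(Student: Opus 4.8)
The plan is to prove parts (i) and (ii) simultaneously, in three steps: first extract from the limiting field its value on $\dd_\top\Omega$ as a single random variable, then use a flux-sector decomposition to see the $\mathrm{GFF}$ component and its independence, and finally identify the top-boundary value with a centered discrete Gaussian by comparing the free energies of the flux sectors. By Theorem~\ref{Hnconvergencetheorem} the fluctuations $\hbar_{\Omega_\delta}$ converge, in the sense of all multi-point moments~\eqref{Hnconvergence}, to a random generalized function $\mathcal H$ with correlations $h_{n,\Omega}$. The stated path-independence of the integrals in~\eqref{Hnconvergence}, together with the fact that $h_{n,\Omega}$ does not depend on the choice of points on $\dd_\top\Omega$, shows that the restriction $\mathcal N:=\mathcal H|_{\dd_\top\Omega}$ is a \emph{single} real random variable and that $\mathcal G:=\mathcal H-\mathcal N\cdot\hm_\Omega(\,\cdot\,;\dd_\top\Omega)$ is a generalized function vanishing on $\dd\Omega$. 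On the discrete side, let $N_\delta$ be the (centered) value of $\hbar_{\Omega_\delta}$ on $\dd_\top\Omega_\delta$; this is integer-valued up to the deterministic shift $-\E[h_{\Omega_\delta}|_{\dd_\top}]$, one has $N_\delta\Rightarrow\mathcal N$, and since $\mathrm{Var}(N_\delta)\to M_{2,\Omega}<\infty$ and all moments converge, $\{N_\delta\}$ is tight and its limit is moment-determined, with $M_{n,\Omega}=\E\mathcal N^{\,n}$.

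Next I would condition the dimer model on the flux sector $\{N_\delta=N\}$ and show that in each such sector the recentered height fluctuations converge to $\pi^{-1/2}\mathrm{GFF}_\Omega+N\cdot\hm_\Omega(\,\cdot\,;\dd_\top\Omega)$. Within a fixed flux sector the relevant geometry is effectively simply connected, so this is a statement of the type established by Kenyon and, in the robust form needed here, in~\cite{clrI}: it requires only the uniform boundedness of $K^{-1}_{\Omega_\delta}$ near $\dd\Omega$ (equivalently, the uniform estimates underlying Theorems~\ref{Kdconvergencetheorem}--\ref{Hnconvergencetheorem}) and does \emph{not} use the explicit form of the limiting coupling functions. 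Combining this with the previous step and the tightness of $N_\delta$ yields $\mathcal H\overset{\mathrm{(d)}}{=}\pi^{-1/2}\mathrm{GFF}_\Omega+\mathcal N\cdot\hm_\Omega(\,\cdot\,;\dd_\top\Omega)$ with $\mathrm{GFF}_\Omega$ independent of $\mathcal N$; in particular $\mathcal G=\pi^{-1/2}\mathrm{GFF}_\Omega$. This reduces part~(ii) to identifying the law of $\mathcal N$, which is precisely part~(i).

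To identify $\mathcal N$ with the centered discrete Gaussian $\xi_\mu$, I would compare the free energies of the flux sectors: $\P[N_\delta=N]=Z_{\Omega_\delta}^{(N)}\big/\sum_{N'}Z_{\Omega_\delta}^{(N')}$. Because the in-sector fluctuations are asymptotically the \emph{same} Gaussian field for every $N$, the only systematic $N$-dependence of $-\log Z_{\Omega_\delta}^{(N)}$ is the (suitably regularized) Dirichlet energy of the deterministic profile $N\cdot\hm_{\Omega_\delta}(\,\cdot\,;\dd_\top\Omega_\delta)$ carried by that sector; concretely one shows $-\log Z_{\Omega_\delta}^{(N)}=\mathrm{const}+\tfrac12\cm\,(N-\mu)^2+o(1)$ as $\delta\to0$, uniformly over the $O(1)$ values of $N$ that carry non-negligible mass, where $\mathrm{const}$ and $\mu\in\R/\Z$ are independent of $N$. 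The quadratic coefficient $\cm$ is fixed by the conformal modulus $\pi/\cm$ of $\Omega$; one can pin it down either by conformal invariance of the flux law together with a direct Fourier-series solution of Problem~\ref{bvp} on the straight cylinder $\R/\cm\Z\times(0,\pi)$ (a computation available even though Problem~\ref{bvp} is not conformally covariant), or by matching with $\mathrm{Var}(\mathcal N)=M_{2,\Omega}$ computed from the two-point function. Passing to the limit gives $\P[\mathcal N=N]\propto e^{-\frac12\cm(N-\mu)^2}$; shifting to enforce $\E\mathcal N=0$ identifies $\mathcal N$ with $\xi_\mu$, which is~(i), and substituting back into the previous step gives~\eqref{eq:GFF+disc}, i.e.~(ii).

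The main obstacle is the third step: controlling the relative free energies $Z_{\Omega_\delta}^{(N)}/Z_{\Omega_\delta}^{(0)}$ precisely enough to extract the exact quadratic-in-$N$ law with the correct coefficient $\cm$, while using only the uniform boundedness of the dimer coupling functions rather than their explicit limits — this is exactly where the ``inevitability'' of the discrete Gaussian is demonstrated. Secondary technical points are making the first step rigorous near $\dd_\top\Omega$ (the non-tangential boundary behavior of the integrals of $\AA_{n,\Omega}$ from~\eqref{An}) and justifying, in the second step, that only finitely many flux sectors carry non-negligible mass and that the flux-wise conditioned measures genuinely converge.
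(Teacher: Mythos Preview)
Your approach has genuine gaps in steps~2 and~3. In step~2, conditioning the dimer model on a flux sector $\{N_\delta=N\}$ does \emph{not} make the problem ``effectively simply connected'': the domain~$\Omega_\delta$ remains doubly connected, and the conditioned measure is no longer a determinantal dimer model to which~\eqref{eq:n-dimers-P} or the results of~\cite{clrI} apply. The uniform bounds on $K^{-1}_{\Omega_\delta}$ in Section~\ref{section:convergence} concern the \emph{unconditioned} model and say nothing about a correlation kernel of the flux-conditioned ensemble, which you would have to construct and control from scratch. In step~3 you correctly identify the free-energy comparison as the crux, but you offer no mechanism for proving $-\log Z_{\Omega_\delta}^{(N)}=\mathrm{const}+\tfrac12\cm(N-\mu)^2+o(1)$; this is essentially Theorem~\ref{mainthrm}(i) restated, and ``uniform boundedness of the coupling functions'' does not by itself yield partition-function asymptotics to $o(1)$ precision.

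The paper takes a completely different, purely analytic route that never conditions on flux and never touches partition functions. Working directly with the limiting correlations, one first shows (Proposition~\ref{prop:h2h3}) that $h_2=\pi^{-1}G+M_2\,\hm_\top\!\otimes\hm_\top$ and $h_3=M_3\,\hm_\top^{\otimes 3}$, so by Proposition~\ref{M2M3Mn} all $\AA_n$ are determined by the two numbers $M_2,M_3$. The central idea is Theorem~\ref{thrm:cubicequation}: a zero-multiplicity argument (a certain combination of the $F_2,F_3$ is a perfect square, forcing an elliptic function built from $\wp$ to have only even-order zeros) shows that $(M_2,M_3)$ lies on an explicit elliptic curve, hence is parameterized by some $\mu\in\R/\Z$ via~\eqref{eq:M2M3=M2M3(mu)}. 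One then writes down explicit theta-function ``model'' kernels $f^{[\pm\pm]}_\mu$ (Section~\ref{subsec:modelfcts}), checks that they reproduce the same $F_2,F_3$ and hence all $\AA_n$, and uses a recursion in $\mu$ to compute the connected correlations $h_n^\conn=\kappa_{n,\mu}\,\hm_\top^{\otimes n}$ for $n\ge 3$ (Corollary~\ref{cor:hconn=kn}); matching the $\kappa_{n,\mu}$ with the cumulants of $\xi_\mu$ proves (i) and (ii) at once.
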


\begin{rema} (i) Similarly to~\cite{kenyon-II}, one can use the convergence of multi-point correlation functions $H_{n,\Omega_\delta}$ to deduce convergence, in distribution, of random fields $\hbar_{\Omega_\delta}$ viewed as elements of a negative-index Sobolev space on $\Omega$. This relies on standard uniform estimates of the dimer coupling functions~$K^{-1}_{\Omega_\delta}(w_\delta,b_\delta)$ for small $|w_\delta-b_\delta|$ and we do not discuss this passage in our paper.

\noindent (ii) In the proof of Theorem~\ref{mainthrm} we do not identify the value of $\mu$ even though one can, at least in principle, do this using the quasi-explicit formulas~\eqref{Hnconvergence}. This is intentional as one of our goals is to provide a generalization of the framework suggested in~\cite[Theorem 1.4]{clrI} to the doubly connected setup: we deduce that the instanton component of $\hbar_{\Omega_\delta}$ converges to an independent discrete Gaussian distribution from general principles rather than from an explicit analysis of the limit of $K^{-1}_{\Omega_\delta}$. (In our setup, the latter is not straightforward due to the lack of conformal covariance.) 
\end{rema}

\addtocontents{toc}{\protect\setcounter{tocdepth}{1}}

\subsection*{Organization of the paper} We discuss discrete holomorphicity and boundary conditions of the dimer coupling functions~$K^{-1}_{\Omega_\delta}$ in Section~\ref{subsec:K-1dischol}. In Section~\ref{subsec:Kdconvergence} we prove that the $K^{-1}_{\Omega_\delta}$ remain uniformly bounded on compact subsets of $(\Omega\times\Omega)\smallsetminus\mathrm{diag}$ as $\delta\to 0$, which allows us to prove Theorem~\ref{Kdconvergencetheorem} using discrete complex analysis techniques. We discuss the functions~$f^{[\pm\pm]}_\Omega$ and their discrete counterparts in Section~\ref{subsec:fpmpmdef} and prove Theorem~\ref{Hnconvergencetheorem} in Section~\ref{subsec:Hnconvergence}. 
The uniqueness of solutions of Problem~\ref{bvp} is proved in Section~\ref{subsec:uniqueness}. We discuss the intrinsic structure of differential forms $\AA_{n,\Omega}$ in Section~\ref{subsec:An,hn,Mn}. The central part of our arguments is Section~\ref{subsec:cubicequation}, in which we prove that the values $(M_{2,\Omega},M_{3,\Omega})$ satisfy a cubic equation and that Theorem~\ref{mainthrm}(i) holds for $n=2$ and $n=3$. Then, we introduce antiperiodic ``model functions'' $f_\mu^{[\pm\pm]}$ that produce the same differential forms $\AA_{2,\Omega}$ and $\AA_{3,\Omega}$ as $f^{[\pm\pm]}_\Omega$, which implies that $\AA_{n,\Omega}$ can be obtained from these ``model functions'' for all $n$; this is done in Section~\ref{subsec:modelfcts}. Finally, in Sections~\ref{subsec:conncorr} and ~\ref{subsec:proofmainthrm} we complete the proof of Theorem~\ref{mainthrm} by explicitly computing connected correlation functions obtained from~$f_\mu^{[\pm\pm]}$ similarly to~\cite{berggren-nicoletti}.

\subsection*{Acknowledgments} D.C. would like to thank Mikhail Basok for many useful discussions and, in particular, for advice on the proof of Lemma~\ref{uniqueness}.  Z.D. would like to thank Dylan Cordaro, Zhengjun Liang, and Yuchuan Yang for helpful discussions.  Z.D. was partially supported by the Rackham One-Term Dissertation Fellowship at the University of Michigan.

\addtocontents{toc}{\protect\setcounter{tocdepth}{2}}

\section{Proofs of Theorem~\ref{Kdconvergencetheorem} and Theorem~\ref{Hnconvergencetheorem}}\label{section:convergence}
\setcounter{equation}{0}

The purpose of this section is to prove Theorem~\ref{Kdconvergencetheorem} and Theorem~\ref{Hnconvergencetheorem}. The proofs rely on the uniqueness of solutions to Problem~\ref{bvp}. In this section we take this uniqueness statement (see Lemma~\ref{uniqueness}) for granted.  (Note that the proof of this lemma is independent of this section.)
Even though our paper is formally self-contained, we assume that the reader has some familiarity with Kenyon's original paper~\cite{kenyon-I} and basics of discrete complex analysis on the square grid. (Below we use~\cite{chelkak-smirnov-discretecomplex} as a reference for these facts.) When passing from Theorem~\ref{Kdconvergencetheorem} to Theorem~\ref{Hnconvergencetheorem}, we find it convenient to recall a more general framework of t-holomorphic functions that has been recently introduced in~\cite{clrI}. However, let us emphasize that this is \emph{not} strictly needed and can be replaced by less universal computations adapted to the square grid setup similarly to~\cite{kenyon-I}.

\subsection{Discrete holomorphicity and boundary conditions of dimer coupling functions} \label{subsec:K-1dischol}
Following the notation used in~\cite{clrI}, denote 
\[
\eta_b:=\begin{cases} 1 & \text{if $b\in B_0(\Omega_\delta)$},\\
i & \text{if $b\in B_1(\Omega_\delta)$},
\end{cases} \qquad \text{and similarly}\qquad  \eta_w:=\begin{cases} 1 & \text{if $w\in W_0(\Omega_\delta)$},\\
i & \text{if $w\in W_1(\Omega_\delta)$.} \end{cases}
\]
According to our choice of the Kasteleyn matrix, $K_{\Omega_\delta}(b,w)\in \overline{\eta}_b\overline{\eta}_w\R$ for all $b\in B(\Omega_\delta)$, $w\in W(\Omega_\delta)$. Therefore, the functions
\begin{equation}\label{eq:Fw-def}
F_w(b):=\overline{\eta}_w K^{-1}_{\Omega_\delta}(w,b)\,\in\,\eta_b\R 
\end{equation}
have purely real values on vertices of type $B_0$ and purely imaginary values on vertices of type $B_1$. 

The crucial observation made in~\cite{kenyon-I}, which motivates the particular choice of the Kasteleyn signs in the matrices $K_{\Omega_\delta}$, is the \emph{discrete holomorphicity} of the $F_w$: if $u^+,u^\sharp,u^-,u^\flat\in B(\Omega_\delta)$ are the right, upper, left, and lower neighbors of a non-boundary white vertex $u\in W(\Omega_\delta)\smallsetminus\{w\}$, then
\begin{equation}
\label{eq:discrete-CR}
F_w(u^\sharp)-F_w(u^\flat)\,=\,i\cdot (F_w(u^+)-F_w(u^-)).
\end{equation}
Moreover, this discrete Cauchy--Riemann equation also holds if $w$ is a boundary vertex provided that we define $F_w(b):=0$ for all vertices $b$ that lie outside $\Omega_\delta$ but are adjacent to one of the boundary components of $\Omega_\delta$. Let us denote the two sets of such vertices by $\dd_\bot\Omega_\delta$ and $\dd_\top\Omega_\delta$, and define
\[
\overline{\Omega}_\delta\,:=\,\Omega_\delta\cup \dd_\bot\Omega_\delta \cup \dd_\top\Omega_\delta.
\]
As in~\cite{kenyon-I}, the discrete holomorphicity of $F_w$ implies that the functions 
\[
\Re F_w:=F_w|_{B_0(\overline{\Omega}_\delta)}\qquad  \text{and}\qquad  \Im F_w:=-iF_w|_{B_1(\overline{\Omega}_\delta)}
\]
are \emph{discrete harmonic} in $\Omega_\delta$ except at the two neighbors of $w$. According to our definition of black-and-white Temperleyan cylinders, all black vertices of $\dd_\bot\Omega_\delta$ have type $B_0$. This yields the Dirichlet boundary condition
\begin{equation}
\label{eq:bc-ReFw}
\Re F_w\big|_{B_0(\dd_\bot\Omega_\delta)}=0
\end{equation}
at the bottom boundary of $\Omega_\delta$, which is fully similar to~\cite{kenyon-I}.

However, in our setup the top boundary of $\Omega_\delta$ has a Temperleyan structure with respect to the white vertices rather than the black ones. (This is conceptually different from the setups considered in~\cite{kenyon-I,russkikh-pwTemperley,nicoletti-temperley}.) In order to formulate the boundary conditions that the functions $F_w$ satisfy at $\dd_\top\Omega$, note that the discrete Cauchy--Riemann equation~\eqref{eq:discrete-CR} implies that one can \emph{locally} define the \emph{discrete primitive} $G_w$ of $F_w$ (on white vertices) by consistently specifying its increments
\begin{equation}
\label{eq:primitiveF-def}
G_w(b^+)-G_w(b^-)\,:=\,2\delta\cdot F_w(b)\quad \text{and}\quad G_w(b^\sharp)-G_w(b^\flat)\,:=\,2i\delta\cdot F_w(b)
\end{equation}
if $b^+,b^\sharp,b^-,b^\flat\in W(\overline{\Omega}_\delta)$ are the right, upper, left, and lower neighbors of a black vertex $b\in B(\Omega_\delta)$. Let us emphasize that, with the conventions fixed above, the restriction of $G_w$ onto vertices of type $W_0$ is purely \emph{imaginary} rather than purely real and vice versa for vertices of type $W_1$. The two functions
\[
\Re G_w:=G_w\,|_{W_1(\overline{\Omega}_\delta)}\qquad \text{and}\qquad \Im G_w:=-iG_w\,|_{W_0(\overline{\Omega}_\delta)}
\]
are discrete harmonic and harmonic conjugate to each other. 

It is a straightforward consequence of the combinatorics of Temperleyan discretizations that, with an appropriate choice of the additive constant in its definition, the discrete harmonic function~$\Im G_w$ satisfies Dirichlet boundary conditions on the top boundary:
\begin{equation}
\label{eq:bc-ImGw}
\Im G_w\big|_{W_0(\dd_\top \Omega_\delta)}=0.
\end{equation}
\begin{rema} \label{rem:Gw-def}
Recall that the \emph{antiperiodic} function $F_w$ is formally defined on the double cover of~$\Omega_\delta$ (so that its values on the two sheets differ by the sign) rather than on $\Omega_\delta$ itself and that its discrete primitive $G_w$ is defined by~\eqref{eq:primitiveF-def} only locally. However, the boundary conditions~\eqref{eq:bc-ImGw} hold on both sheets of this double cover, which implies that $\Im G_w$ is well-defined in a vicinity of $\dd_\top\Omega_\delta$ as an \emph{antiperiodic} function. Also, note that 
\begin{itemize}
\item if $w\in W_0(\Omega_\delta)$, then $\Im G_w$ is well-defined near $w$ but not harmonic at this vertex while $\Re G_w$ has additive monodromy $2\overline{\eta}_w=2$ when going aroud $w$ counterclockwise;
\item vice versa, if $w\in W_1(\Omega_\delta)$, then $\Re G_w$ is well-defined near $w$ but not harmonic at this vertex while $\Im G_w$ has additive monodromy $2\overline{\eta}_w=-2i$ when going around $w$ counterclockwise.
\end{itemize}
\end{rema}

In what follows, we speak about the convergence of discrete holomorphic functions~$F_\delta$ defined on subsets of the domains $\Omega_\delta$ to holomorphic limits~$f$. This is understood as
\[
\Re F_{w_\delta}=F_{w_\delta}|_{B_0(\Omega_\delta)}\to \Re f\quad\text{and}\quad \Im F_{w_\delta}=-iF_{w_\delta}|_{B_1(\Omega_\delta)}\to \Im f
\]
uniformly on compact subsets and similarly for the discrete primitives:
\[
\Re G_{w_\delta}=G_{w_\delta}|_{W_1(\Omega_\delta)}\to \Re g\quad\text{and}\quad \Im G_{w_\delta}=-iG_{w_\delta}|_{W_0(\Omega_\delta)}\to \Im g\,.
\]
It is easy to see that these conventions imply that $g$ is a primitive of $f$.

\subsection{Proof of Theorem~\ref{Kdconvergencetheorem}}
\label{subsec:Kdconvergence}

We start the proof by showing that the dimer coupling functions $K^{-1}_{\Omega_\delta}(\,\cdot\,,\,\cdot\,)$, or equivalently discrete holomorphic functions $F_w$ defined by~\eqref{eq:Fw-def}, remain uniformly bounded as $\delta\to 0$ on compact subsets of the set
\[
(\Omega\times\Omega)\smallsetminus\diag:=\{(z_1,z_2)\in\Omega\times\Omega:z_1\ne z_2\}\,.
\]
This is done in Proposition~\ref{uniformboundedness}, which uses the nonexistence of nontrivial solutions of Problem~\ref{bvp} with no singularity, i.e., with~$\eta=0$.  After that, standard Harnack estimates and the Arzel\`a--Ascoli theorem guarantee the existence of subsequential limits of the functions $F_w$ as $\delta\to 0$. To complete the proof of Theorem~\ref{Kdconvergencetheorem}, it remains to show that all such limits have the prescribed singularity as $z_2\to z_1$ and that the boundary conditions~\eqref{eq:bc-ReFw} and~\eqref{eq:bc-ImGw} remain true for the limiting function. This repeats the proof of Proposition~\ref{uniformboundedness}, which is the main part of the argument.

\begin{prop}\label{uniformboundedness}
    The functions~\eqref{eq:Fw-def} are uniformly bounded on compact subsets of $(\Omega\times\Omega)\smallsetminus\diag$.
\end{prop}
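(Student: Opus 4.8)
The plan is to argue by contradiction in the standard ``compactness + uniqueness'' scheme familiar from discrete complex analysis. Suppose that the functions $F_w$ of~\eqref{eq:Fw-def} are \emph{not} uniformly bounded on some compact set $\mathcal{K}\subset(\Omega\times\Omega)\smallsetminus\diag$. Then there exist sequences $w_\delta\to z_1$, $b_\delta\to z_2$ with $(z_1,z_2)\in\mathcal{K}$ and a subsequence of meshes along which $M_\delta:=\max_{\mathcal{K}_\delta}|F_{w_\delta}|\to\infty$, where $\mathcal{K}_\delta$ is a discrete approximation of $\mathcal{K}$ (or, more robustly, the maximum is taken over a slightly larger compact region at a definite distance from $\mathrm{diag}$ and from $\partial\Omega$). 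We renormalize: set $\widetilde F_\delta:=M_\delta^{-1}F_{w_\delta}$. Since $K^{-1}_{\Omega_\delta}$ has a simple-pole-type singularity only of order $\delta^{-1}\times\delta=O(1)$ at the two neighbors of $w_\delta$ -- more precisely, $F_{w_\delta}$ is uniformly bounded near the singularity at scale comparable to~$\delta$ by the explicit structure of $K^{-1}$ restricted to adjacent vertices -- dividing by $M_\delta\to\infty$ kills the singular contribution in the limit.

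First I would record the a~priori regularity: the $\widetilde F_\delta$ are discrete holomorphic (satisfy~\eqref{eq:discrete-CR}) away from the two neighbors of $w_\delta$, with $\Re\widetilde F_\delta,\Im\widetilde F_\delta$ discrete harmonic there, bounded by $1$ on $\mathcal{K}_\delta$ and by $O(M_\delta^{-1})=o(1)$ near the singularity. Standard discrete Harnack/gradient estimates for discrete harmonic functions on $\delta\Z^2$ (see~\cite{chelkak-smirnov-discretecomplex}) then give equicontinuity on compact subsets of $\Omega\smallsetminus\{z_1\}$, whence by Arzel\`a--Ascoli we may pass to a further subsequence along which $\widetilde F_\delta\to f$ uniformly on compacts, with $f$ holomorphic in $\Omega\smallsetminus\{z_1\}$ and, in fact, holomorphic across $z_1$ since the singular part vanished in the rescaling. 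The discrete primitives $\widetilde G_\delta:=M_\delta^{-1}G_{w_\delta}$ converge (after fixing the additive constant via~\eqref{eq:bc-ImGw}) to a primitive $g$ of $f$; here one must be slightly careful about monodromy, but since $\widetilde F_\delta$ is antiperiodic on the double cover, so is $f$, and the limiting $g$ is the antiperiodic primitive near $\partial_\top\Omega$.

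Next I would transfer the boundary conditions. The Dirichlet condition~\eqref{eq:bc-ReFw}, $\Re F_{w_\delta}=0$ on $B_0(\partial_\bot\Omega_\delta)$, survives rescaling and the limit: using boundary Harnack estimates for discrete harmonic functions with zero Dirichlet data (again~\cite{chelkak-smirnov-discretecomplex}, and exactly as in~\cite{kenyon-I}), $\Re\widetilde F_\delta$ extends continuously by $0$ up to $\partial_\bot\Omega_\delta$, so $\Re f$ is continuous up to $\partial_\bot\Omega$ and vanishes there. Likewise~\eqref{eq:bc-ImGw} passes to the limit: $\Im\widetilde G_\delta=0$ on $W_0(\partial_\top\Omega_\delta)$ gives, in the limit, that the imaginary part of the primitive $g$ is continuous up to $\partial_\top\Omega$ and vanishes there. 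Thus $f$ solves Problem~\ref{bvp} with $\eta=0$. By the uniqueness statement (Lemma~\ref{uniqueness}), the only such solution is $f\equiv 0$, hence $g$ is constant; combined with the normalization $\Im g=0$ on $\partial_\top\Omega$ this forces $g\equiv 0$ as well. But $\max_{\mathcal{K}_\delta}|\widetilde F_\delta|=1$ for every $\delta$ in the subsequence, and uniform convergence on $\mathcal{K}$ would then force $\max_{\mathcal{K}}|f|=1$, contradicting $f\equiv 0$. This contradiction proves the proposition.

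I expect the main obstacle to be \emph{the boundary regularity near $\partial_\top\Omega$}: unlike the bottom boundary, where we have a direct Dirichlet condition on the discrete harmonic function $\Re F_w$ exactly as in~\cite{kenyon-I}, on the top boundary the Dirichlet condition~\eqref{eq:bc-ImGw} is imposed on the discrete \emph{primitive} $\Im G_w$, which is only locally defined and lives on a double cover. One must check that the compactness argument (equicontinuity up to the boundary, no loss of mass escaping to $\partial_\top\Omega$, correct passage of the antiperiodic monodromy to the limit) goes through for $G_w$, and that the zero boundary data is genuinely ``seen'' by the limit rather than concentrating near the corners of the Temperleyan structure. A secondary technical point is handling the singularity uniformly in $\delta$: one needs the quantitative bound that $F_{w_\delta}$ is $O(1)$ (with constant independent of $\delta$) on the circle of radius, say, $\tfrac12|z_1-z_2|$ around $w_\delta$, which follows from the maximum principle applied to $\Re F_{w_\delta}$ and $\Im F_{w_\delta}$ together with the explicit local behavior of $K^{-1}_{\Omega_\delta}$ at the neighbors of $w_\delta$; this is routine but must be stated, since it is precisely what makes the rescaling by $M_\delta$ legitimate.
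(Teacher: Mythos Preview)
Your overall scheme---contradiction, renormalization by the maximum over a bulk region, compactness via Arzel\`a--Ascoli, and appeal to Lemma~\ref{uniqueness} to force $f\equiv 0$---is precisely the paper's strategy, and your identification of the top-boundary regularity as the delicate step is on target. (The paper handles both boundaries via weak-Beurling estimates: $\Re\widetilde F_\delta=O(\dist(\cdot,\partial_\bot\Omega)^\beta)$ and $\Im\widetilde G_\delta=O(\dist(\cdot,\partial_\top\Omega)^\beta)$; these serve both to extend the uniform bound from the normalization region to all compacts of $\Omega\smallsetminus\{z_1\}$ and to verify the boundary conditions for the limit~$f$.)

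The genuine gap is your treatment of the singularity at $z_1$. The claim that ``$F_{w_\delta}$ is uniformly bounded near the singularity at scale comparable to~$\delta$'' is false in this normalization: with $K_{\Omega_\delta}(b,w)=O(\delta)$ one has $K^{-1}_{\Omega_\delta}(w,b)=O(|b-w|^{-1})$, hence of order $\delta^{-1}$ at the neighbors of~$w$. So dividing by $M_\delta$ does not automatically kill the singularity unless $M_\delta\gg\delta^{-1}$, which is not known. Worse, your final paragraph asserts that $F_{w_\delta}$ is $O(1)$ on a circle of radius $\tfrac12|z_1-z_2|$ around~$w_\delta$, to be obtained ``from the maximum principle''---but this is exactly the content of the proposition, so the argument is circular. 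The maximum principle for $\Re F_{w_\delta}$ or $\Im F_{w_\delta}$ cannot supply this bound: neither function has controlled data on a full closed contour (only $\Re F_w$ vanishes on $\partial_\bot\Omega_\delta$, and only $\Im G_w$ on $\partial_\top\Omega_\delta$).

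The paper's fix is to subtract the \emph{full-plane} coupling function $C_\delta(w_\delta,\cdot)$, which shares the discrete singularity of $K^{-1}_{\Omega_\delta}(w_\delta,\cdot)$. The difference $c_\delta\bigl(\overline\eta_{w_\delta}K^{-1}_{\Omega_\delta}(w_\delta,\cdot)-\overline\eta_{w_\delta}C_\delta(w_\delta,\cdot)\bigr)$ is discrete holomorphic throughout a disk $B(z_1,2d)$; on the boundary of this disk both terms are uniformly $O(1)$ (the first because that boundary lies in the normalization region $\overline\Omega^{(d)}_w$, the second because $|C_\delta(w_\delta,b)|=O(|b-w_\delta|^{-1})$ uniformly in~$\delta$), so the maximum principle bounds the difference inside the whole disk. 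Passing to a subsequential limit and using $c_\delta\to 0$ (whence $c_\delta C_\delta\to 0$ away from~$w_\delta$), one sees that $f$ extends holomorphically across~$z_1$. This subtraction trick is the missing ingredient in your proposal.
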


\begin{proof} For small enough $d>0$, denote by $\Omega^{(d)}$ the connected component of the $d$-interior of the set
$\{z\in\Omega:\dist(z,\dd\Omega)>d\}$ that contains the bulk of $\Omega$. (In particular, $\Omega^{(d)}$ is a doubly connected cylindrical domain.)
We aim to prove that discrete holomorphic functions~\eqref{eq:Fw-def} are uniformly bounded, as $\delta\to 0$, if 
\[
w=w_\delta\in\oOmega^{(3d)}\quad \text{and}\quad b=b_\delta\in\oOmega^{(d)}_w,\quad \text{where}\quad \Omega^{(d)}_{w}:=\Omega^{(d)}\smallsetminus \overline{B(w,d)}\,;
\]
see~Fig.~\ref{fig:d-interior-cylinder} below. Suppose, for the sake of contradiction, that this is not true along a certain sequence of $(b_\delta,w_\delta)$. Without loss of generality, we can assume that the function $|F_{w_\delta}|$ attains its maximum in $\oOmega^{(d)}_w$ at~$b_\delta$. Passing to a subsequence if necessary, we can also assume that $w_\delta\to z_1\in \oOmega^{(3d)}$ and $b_\delta\to z_2\in \oOmega^{(d)}_w$ as $\delta\to 0$. The idea, which is standard now, is to consider the rescaled functions 
\[
\widetilde{F}_\delta\,:=\,c_\delta F_{w_\delta}\,,\quad \text{where}\quad c_\delta:=|F_{w_\delta}(b_\delta)|^{-1}\to 0\ \ \text{as}\ \ \delta\to 0,
\]
and to prove that they converge to a solution $f^{[0]}(z_1,\,\cdot\,)$ of Problem~\ref{bvp} with no singularity, i.e., with $\eta=0$. Since this boundary value problem has no nontrivial solution (see Lemma~\ref{subsec:uniqueness}), this convergence contradicts the normalization $1=|\widetilde{F}_{\delta}(b_\delta)|\to f^{[0]}(z_1,z_2)=0$.

Recall that the functions $\widetilde{F}_\delta$ are uniformly bounded by~$1$ in $\oOmega^{(d)}_w$. Let $\Omega_\bot$ and $\Omega_\top$ be the bottom and the top parts of $\Omega\smallsetminus \oOmega^{(d)}_w$. (Note that this set also contains the disk $B(w,d)$.) Since~$\Re \widetilde{F}_\delta=0$ on $\dd_\bot\Omega_\delta$, it follows by the maximum principle that $|\Re \widetilde{F}_\delta|\le 1$ in $\Omega_\bot$. Moreover, a weak-Beurling estimate (see for example \cite[Proposition~2.11]{chelkak-smirnov-discretecomplex}) implies that, uniformly in $\delta\to 0$,
\begin{equation}
\label{eq:ReFw-Beurling}
\Re\widetilde{F}_\delta\,=\,O(\dist(\,\cdot\,,\dd_\bot\Omega_\delta)^\beta),
\end{equation}
where $\beta>0$ is an absolute constant. (As we are working on the square grid, it is known from~\cite{lawler-limic} that one can take the optimal value~$\beta=\frac12$. However, we do not need this sharp estimate.) By a discrete Harnack estimate (see for example \cite[Propostion~2.7]{chelkak-smirnov-discretecomplex}) this yields a uniform estimate
\[
|\nabla \Im\widetilde{F}_\delta|\,=\,|\nabla \Re \widetilde{F}_\delta(\,\cdot\,)|\,=\,O(\dist(\,\cdot\,,\dd_\bot\Omega_\delta)^{\beta-1})
\]
for the discrete gradient of the harmonic conjugate functions~$\Im \widetilde{F}_\delta$ and $\Re\widetilde{F}_\delta$. Then, letting $\gamma$ be a discrete approximation of a smooth path going from a point in $\Omega_\bot$ to $\oOmega^{(d)}_w$, we obtain the estimate
\[
|\Im \widetilde{F}_\delta|\,\le\,1+\int_\gamma|\nabla\Im\widetilde{F}_\delta(z)||dz|\,=\,O(1)\ \ \text{on compact subsets of $\Omega_\bot$}\,,
\]
uniformly in $\delta\to 0$. A similar consideration applies to the discrete primitive $\widetilde{G}_\delta:=c_\delta G_{w_\delta}$ of the function $\widetilde{F}_\delta$ defined in the top part of $\Omega_\delta$ (see Remark~\ref{rem:Gw-def} above). Namely, the boundary conditions~$\Im \widetilde{G}_\delta=0$ on $\dd_\top\Omega_\delta$ imply that we have, uniformly in $\delta$,
\begin{equation}
\label{eq:ImGw-Beurling}
\Im\widetilde{G}_\delta\,=\,O(\dist(\,\cdot\,,\dd_\top\Omega_\delta)^\beta).
\end{equation}
By the discrete Harnack inequality, this allows us to deduce that
\[
|\widetilde{F}_\delta|=|\nabla\Im\widetilde{G}_\delta(\,\cdot\,)|=O(\dist(\,\cdot\,,\dd_\top\Omega_\delta)^{\beta-1})=O(1) \ \ \text{on compact subsets of $\Omega_\top$}\,.
\]

At this point, we know that the values of $\widetilde{F}_\delta$ remain uniformly bounded as $\delta\to 0$ on each compact subset of $\Omega\smallsetminus B(w_\delta, d)$. A standard application of the Harnack inequality and the Arzel\`a--Ascoli theorem implies the existence of a subsequential limit $f:\Omega\smallsetminus B(z_1,d)\to\C$ of discrete holomorphic functions $\widetilde{F}_\delta$. The function~$f$ inherits the antiperiodicity property of each of $\widetilde{F}_\delta$ and is holomorphic. Moreover, the convergence of $\widetilde{F}_\delta$ to $f$ implies the convergence of the discrete primitives $\widetilde{G}_\delta$ to a primitive~$g$ of~$f$. Taking limits as $\delta\to 0$ in the uniform estimates~\eqref{eq:ReFw-Beurling} and~\eqref{eq:ImGw-Beurling}, one sees that
\begin{equation}
\label{eq:RefImg-bc}
\Re f\,=\,O(\dist(\,\cdot\,,\dd_\bot\Omega)^\beta)\quad \text{and}\quad 
\Im g\,=\,O(\dist(\,\cdot\,,\dd_\top\Omega)^\beta),
\end{equation}
which means that $f$ satisfies the boundary conditions in Problem~\ref{bvp}.

In order to get the desired contradiction, it remains to show that $f$ admits a holomorphic continuation in the disk $B(z_1,d)$. Let $C_{\delta}(w_\delta,\,\cdot\,)$ denote the dimer coupling function on the full plane $\delta\Z^2$, which by \cite{kenyon-I} has a discrete singularity at $w_\delta$ of the same type as~$K_{\Omega_\delta}^{-1}(w_\delta,\,\cdot\,)$. Thus, the function
\begin{equation}
\label{eq:x-Kmodified}
c_\delta \cdot (\overline{\eta}_{w_\delta}K_{\Omega_\delta}^{-1}(w_\delta,\,\cdot\,)-\overline{\eta}_{w_\delta}C_\delta(w_\delta,\,\cdot\,))
\end{equation}
is discrete holomorphic in the disk $B(z_1,2d)$. It is known that one has $|C_{\delta}(w_\delta,b)|=O(|b-w_\delta|^{-1})$, uniformly in $\delta$, provided that $|b-w_\delta|\ge d$. (Note that, contrary to~\cite{kenyon-I}, in our convention the entries of the Kasteleyn matrix are of order $\delta$ rather than $1$.) This implies that the functions~\eqref{eq:x-Kmodified} are uniformly bounded near the boundary of $B(z_1,2d)$ and hence uniformly bounded inside this disk by the maximum principle. Applying the Arzel\`a--Ascoli theorem as above and passing to a subsequence once again, one obtains that the functions~\eqref{eq:x-Kmodified} have a holomorphic limit in $B(z_1,2d)$. However, since $c_\delta\to 0$ as $\delta\to 0$, the second term in~\eqref{eq:x-Kmodified} converges to $0$ and hence this limit coincides with $f$ in the annulus $B(z_1,2d)\smallsetminus B(z_1,d)$. Thus, $f$ admits a holomorphic continuation to the disk $B(z_1,d)$ and hence solves Problem~\ref{bvp} with $\eta=0$. In particular, $f(z_1,z_2)=0$ (see Lemma~\ref{uniqueness}), which contradicts the fact that $c_\delta |K^{-1}_\delta(w_\delta,b_\delta)|=1$ for all $\delta$.
\end{proof}

\begin{proof}[Proof of Theorem~\ref{Kdconvergencetheorem}] The proof repeats the proof of Proposition~\ref{uniformboundedness}. From this proposition, we already know that the values $F_{w_\delta}(b_\delta)$ are uniformly bounded on compact subsets of $(\Omega\times \Omega)\smallsetminus\diag$. Applying the Harnack principle and the Arzel\`a--Ascoli theorem, we can find a subsequential limit $f:\Omega\smallsetminus\{z_1\}\to\C$ of discrete holomorphic functions $F_{w_\delta}$. Similarly to the proof of Proposition~\ref{uniformboundedness}, the functions~$F_{w_\delta}$ and their discrete primitives~$G_{w_\delta}$ satisfy the uniform bounds~\eqref{eq:bc-ReFw} and~\eqref{eq:bc-ImGw} near $\dd_\bot\Omega_\delta$ and $\dd_\top\Omega_\delta$ respectively, which imply the estimates~\eqref{eq:RefImg-bc} for this~$f$ and for a primitive~$g$ of~$f$. Therefore, the function~$f$ satisfies the boundary conditions required in Problem~\ref{bvp}.

It remains to prove that $f$ has the prescribed singularity at $z_1$. To this end, let us consider functions~\eqref{eq:x-Kmodified} with $c_\delta=1$. It is known (see~\cite[Theorem~11]{kenyon-I}) that the full plane coupling functions $\overline{\eta}_{w_\delta}C_\delta(w_\delta,\,\cdot\,)$ converge, uniformly on compacts as $\delta\to 0$, to $-i(\pi(z-z_1))^{-1}$ if $\eta_\mathfrak{w}=1$ (i.e., if the $w_\delta$ have type $W_0$) and to $-(\pi(z-z_1))^{-1}$ if $\eta_\mathfrak{w}=i$ (i.e., if the $w_\delta$ have type $W_1$); see also Remark~\ref{rem:Cmultiple} below. Similarly to the proof of Proposition~\ref{uniformboundedness}, this means that the function $f(z)-\overline{\eta}_\mathfrak{w}\cdot (\pi i(z-z_1))^{-1}$ admits an analytic continuation to a vicinity of $z_1$ and allows us to conclude that $f=f^{[\eta_\mathfrak{w}]}(z_1,\,\cdot\,)$. Finally, recall that in our convention the values of $F_{w_\delta}=\overline{\eta}_{w_\delta}K^{-1}_{\Omega_\delta}(w_\delta,\,\cdot\,)$ on vertices of type~$B_0$ (resp., of type $B_1$) are purely real (resp., purely imaginary) and thus converge to $\Re f$ (resp., $i\Im f= i\Re[-if]$) as $\delta\to 0$.
\end{proof}

\begin{rema} \label{rem:Cmultiple}
As our notation for $B_0,B_1$ and $W_0,W_1$ differs from the one used by Kenyon in~\cite{kenyon-I}, let us provide a simple argument to check the prefactors in the limits of the full-plane coupling function. In our notation, the discrete primitive of the function $F_{w_\delta}=\overline{\eta}_{w_\delta}C_\delta(w_\delta,\,\cdot\,)$ has additive monodromy~$2\overline{\eta}_{w_\delta}$ when going around $w_\delta$ counterclockwise; see Remark~\ref{rem:Gw-def}. The limit of functions~$F_{w_\delta}$ should have the same property, which corresponds to the residue $\overline{\eta}_\mathfrak{w}(\pi i)^{-1}$.
\end{rema}

\begin{corl} \label{existence}
Problem~\ref{bvp} has a solution for each $\eta\in\C$.
\end{corl}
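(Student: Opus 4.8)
The plan is to reduce Problem~\ref{bvp} for general $\eta\in\C$ to the two special cases $\eta=1$ and $\eta=i$, which are already handled by Theorem~\ref{Kdconvergencetheorem}, and then to exploit the fact that Problem~\ref{bvp} depends $\R$-linearly (though not $\C$-linearly) on $\eta$.

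First I would record that the proof of Theorem~\ref{Kdconvergencetheorem} in fact \emph{constructs} solutions to Problem~\ref{bvp}: applying it along a sequence $w_\delta\in W_0(\Omega_\delta)$ (resp.\ $w_\delta\in W_1(\Omega_\delta)$) with $w_\delta\to z_1$ for an arbitrary interior point $z_1\in\Omega$, the subsequential limit $f$ of the discrete holomorphic functions $F_{w_\delta}$ produced there is an antiperiodic holomorphic function on $\Omega\smallsetminus\{z_1\}$ that satisfies the two boundary conditions of Problem~\ref{bvp} and has the prescribed singularity at $z_1$ with $\eta=1$ (resp.\ $\eta=i$). Denote these solutions by $f_\Omega^{[1]}(z_1,\,\cdot\,)$ and $f_\Omega^{[i]}(z_1,\,\cdot\,)$, and let $g_\Omega^{[1]}$, $g_\Omega^{[i]}$ be the corresponding primitives near $\dd_\top\Omega$ supplied by the third bullet of Problem~\ref{bvp}.

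Next, for an arbitrary $\eta=a+ib$ with $a,b\in\R$, I would set
\[
f_\Omega^{[\eta]}(z_1,\,\cdot\,)\;:=\;a\cdot f_\Omega^{[1]}(z_1,\,\cdot\,)+b\cdot f_\Omega^{[i]}(z_1,\,\cdot\,)
\]
and verify the three requirements directly. Holomorphicity on $\Omega\smallsetminus\{z_1\}$ and antiperiodicity are preserved under $\R$-linear combinations. The principal part of $f_\Omega^{[\eta]}$ at $z_1$ is $(\pi i)^{-1}(z_2-z_1)^{-1}\big(a\cdot\overline{1}+b\cdot\overline{i}\big)=(\pi i)^{-1}(z_2-z_1)^{-1}\overline{\eta}$, which is exactly the asymptotics required for the parameter $\eta$. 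Since $z\mapsto\Re z$ is $\R$-linear, $\Re[f_\Omega^{[\eta]}]=a\,\Re[f_\Omega^{[1]}]+b\,\Re[f_\Omega^{[i]}]$ is continuous up to $\dd_\bot\Omega$ and vanishes there. Finally, $a\,g_\Omega^{[1]}+b\,g_\Omega^{[i]}$ is a primitive of $f_\Omega^{[\eta]}(z_1,\,\cdot\,)$ near $\dd_\top\Omega$, and its imaginary part $a\,\Im[g_\Omega^{[1]}]+b\,\Im[g_\Omega^{[i]}]$ is continuous up to $\dd_\top\Omega$ and vanishes there. Hence $f_\Omega^{[\eta]}$ solves Problem~\ref{bvp}; for $\eta=0$ this recovers the trivial solution $f\equiv0$, consistently with the removable-singularity discussion in Proposition~\ref{uniformboundedness}.

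I do not expect any genuine obstacle here; the only point requiring a little care is that the primitive in the third condition is defined only in a neighbourhood of $\dd_\top\Omega$ rather than globally on $\Omega\smallsetminus\{z_1\}$ (cf.\ Remark~\ref{rem:Gw-def}). However, since an antiderivative of a sum is the sum of antiderivatives, no additive constants need to be readjusted, and the boundary condition $\Im[g_\Omega^{[\eta]}]=0$ on $\dd_\top\Omega$ follows term by term from the corresponding conditions for $f_\Omega^{[1]}$ and $f_\Omega^{[i]}$.
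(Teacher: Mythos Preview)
Your proposal is correct and takes essentially the same approach as the paper: obtain $f_\Omega^{[1]}$ and $f_\Omega^{[i]}$ from Theorem~\ref{Kdconvergencetheorem} and then set $f_\Omega^{[\eta]}:=\Re(\eta)\,f_\Omega^{[1]}+\Im(\eta)\,f_\Omega^{[i]}$, which has the required singularity and inherits both boundary conditions. The paper states this in one line, while you spell out the verification of each bullet, but there is no substantive difference.
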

\begin{proof} It follows from Theorem~\ref{Kdconvergencetheorem} that Problem~\ref{bvp} has solutions $f^{[1]}_\Omega$ and $f^{[i]}_\Omega$ for $\eta=1$ and $\eta=i$ that appear as the limits of the dimer coupling functions in $\Omega_\delta$. For general $\eta\in\C$, the function $f^{[\eta]}_\Omega:=\Re(\eta)f^{[1]}_\Omega+\Im(\eta)f^{[i]}_\Omega$ has the desired singularity and the same boundary conditions.
\end{proof}

\begin{figure}
    \centering
    \begin{adjustbox}{raise=-0.25cm}
    \begin{tikzpicture}[scale=1.1]
    
    \filldraw[fill=gray!50,draw=none] (0,-1.25) .. controls (1,-2.25) and (2,-0.25) .. (3,-1.25) -- (3,1.25) .. controls (1.5,1.75) .. (0,1.25) -- (0,-1.25);
    
    \draw[very thick,dashed] (0,-2) -- (0,2);
    \draw[very thick,dashed] (3,-2) -- (3,2);
    
    \draw[very thick] (0,1.5) .. controls (1.5,2) .. (3,1.5);
    \draw[thick] (0,1.25) .. controls (1.5,1.75) .. (3,1.25);
    
    \draw[very thick] (0,-1.5) .. controls (1,-2.5) and (2,-0.5) .. (3,-1.5);
    \draw[thick] (0,-1.25) .. controls (1,-2.25) and (2,-0.25) .. (3,-1.25);
    \draw[thin] (0,-0.75) .. controls (1,-1.75) and (2,0.25) .. (3,-0.75);

    \filldraw[fill=white,thick,dotted] (1.75,1.125) arc (0:360:0.25) -- (1.75,1.125);
    
    \draw[thin] (0,0.75) .. controls (1.5,1.25) .. (3,0.75);
    
    \filldraw (1.5,1.125) circle (2pt);

    \node at (1.5,0.625){$w$};
    \node at (2.25,-1.75){$\Omega$};

    \draw[decorate,decoration={brace,amplitude=5pt},thick] (-0.25,-1.25) -- (-0.25,1.25);
    \draw[decorate,decoration={brace,mirror,amplitude=5pt},thick] (3.25,-0.75) -- (3.25,0.75);
    \draw[decorate,decoration={brace,mirror,amplitude=2.5pt},thick] (3.25,1.25) -- (3.25,1.5);
    \draw[decorate,decoration={brace,mirror,amplitude=2.5pt},thick] (3.25,-1.5) -- (3.25,-1.25);
    \node at (-0.875,0.0625){$\Omega^{(d)}$};
    \node at (4,0.0625){$\Omega^{(3d)}$};
    \node at (3.875,-1.42){$\Omega_\bot$};
    \node at (3.875,1.3125){$\Omega_\top$};

    \end{tikzpicture}
    \end{adjustbox}
        \qquad\qquad
    \begin{adjustbox}{raise=0.25cm}
    \begin{tikzpicture}[scale=1.1]
       \newcommand\DiagUp[3]{
           \node at (#1+0.23,#2+0.77){$\C$};
           \node at (#1+0.77,#2+0.23){$\C$};
           \filldraw[fill=lightgray, draw=lightgray] (#1+0.02,#2+0.02) .. controls (#1+0.5,#2+0.2) and (#1+0.8,#2+0.5) .. (#1+0.98,#2+0.98) 
                                                                       .. controls (#1+0.5,#2+0.8) and (#1+0.2,#2+0.5) .. (#1+0.02,#2+0.02);
           \node at (#1+0.5,#2+0.5){\rotatebox{45}{\scalebox{0.72}{$e^{#3 i\frac\pi 4}\R$}}}}
       \newcommand\DiagDown[3]{
           \node at (#1+0.23,#2+0.23){$\C$};
           \node at (#1+0.77,#2+0.77){$\C$};
           \filldraw[fill=lightgray, draw=lightgray] (#1+0.02,#2+0.98) .. controls (#1+0.5,#2+0.8) and (#1+0.8,#2+0.5) .. (#1+0.98,#2+0.02) 
                                                                       .. controls (#1+0.5,#2+0.2) and (#1+0.2,#2+0.5) .. (#1+0.02,#2+0.98);
           \node at (#1+0.5,#2+0.5){\rotatebox{-45}{\scalebox{0.72}{$e^{#3 i\frac\pi 4}\R$}}}}   
    
       \foreach \x in {0,1,2,3,4}{
       \foreach \y in {0,1,2,3}{
       \ifodd \x 
         \ifodd \y 
         \else 
            \filldraw[fill=lightgray, draw=lightgray] (\x,\y) -- (\x+1,\y) -- (\x+1,\y+1) -- (\x,\y+1);
            \node at (\x+0.5,\y+0.5) {$i\R$}
         \fi
       \else  
         \ifodd \y 
           \filldraw[fill=lightgray, draw=lightgray] (\x,\y) -- (\x+1,\y) -- (\x+1,\y+1) -- (\x,\y+1);
           \node at (\x+0.5,\y+0.5) {$\R$}
         \else
         \fi 
       \fi;}}
       
       \DiagUp{0}{0}{-}; \DiagDown{0}{2}{};
       \DiagUp{1}{1}{}; \DiagDown{1}{3}{-};
       \DiagDown{2}{0}{}; \DiagUp{2}{2}{-};
       \DiagDown{3}{1}{-}; \DiagUp{3}{3}{};
       \DiagUp{4}{0}{-}; \DiagDown{4}{2}{};
       
       \foreach \x in {1,2,3,4}{
        \draw[thick] (\x,0) -- (\x,4);}
       \foreach \y in {1,2,3}{
        \draw[thick] (0,\y) -- (5,\y);}
    \end{tikzpicture}
    \end{adjustbox}
        
    \vspace{-10pt}

    \caption{\textsc{Left:} Notation used in the proof of Proposition~\ref{uniformboundedness}: a point $w\in\oOmega^{(3d)}$ and the regions $\Omega^{(d)}_{w}$, $\Omega_\top$ and~$\Omega_\bot$ of $\Omega\smallsetminus \overline{B(w,d)}$. Note that $\overline{B(w,d)}\subset\Omega^{(d)}$. \\ \textsc{Right:} A splitting of the white squares of~$\delta\mathbb{Z}^2$ and the types of values ($\eta_b\R$ or $\C$) of t-white-holomorphic functions on this splitting; see also~\cite[Fig.~15]{clrI}. }
    \label{fig:d-interior-cylinder} \label{fig:splitting-t-hol}
\end{figure}

\subsection{Functions $f^{[\pm\pm]}_\Omega$ and their discrete counterparts} \label{subsec:fpmpmdef}
In this section we use a longer notation
$f^{[\eta+]}_\Omega(z_1,z_2)$ instead of $f^{[\eta]}_\Omega(z_1,z_2)$, in particular to emphasize the holomorphicity of these functions in the \emph{second} variable~$z_2$. Recall that we define
\[
    f_\Omega^{[\pm+]}(z_1,z_2):=f_\Omega^{[1+]}(z_1,z_2)\pm if_\Omega^{[i+]}(z_1,z_2),\qquad f_\Omega^{[\mp-]}(z_1,z_2):=\cc{f_\Omega^{[\pm+]}(z_1,z_2)},
\]
and that the solutions of Problem~\ref{bvp} for all $\eta\in\C$ can then be written as $f_\Omega^{[\eta+]}=\tfrac12\big(\cc\eta f_\Omega^{[++]}+\eta f_\Omega^{[-+]}\big)$. Using such defined functions~$f_\Omega^{[\pm\pm]}$, the convergence statement from Theorem~\ref{Kdconvergencetheorem} reads as
\begin{equation}
\label{eq:K-1limit_fpmpm}
K^{-1}_{\Omega_\delta}(w_\delta,b_\delta)\,\to\,\tfrac{1}{4}\big[f^{[++]}_\Omega(z_1,z_2)+
\eta_\mathfrak{w}^2f^{[-+]}_\Omega(z_1,z_2)+\eta_\mathfrak{b}^2f^{[+-]}_\Omega(z_1,z_2)+\eta_\mathfrak{w}^2\eta_\mathfrak{b}^2f^{[--]}_\Omega(z_1,z_2)\big]
\end{equation}
if $w_\delta\to z_1$ and $b_\delta\to z_2$ as $\delta\to 0$. It follows from the formulation of Problem~\ref{bvp} that
\begin{equation}
\label{eq:fpmpm-sing}
f_\Omega^{[++]}(z_1,z_2)=\frac{2}{\pi i}\cdot\frac{1}{z_2-z_1}+O(1)\quad\text{ and }\quad f_\Omega^{[-+]}(z_1,z_2)=O(1)\quad\text{as}\ \ z_2\to z_1.
\end{equation}

Let us now exchange the roles of the black and white vertices and consider a \emph{dual boundary value problem}: given $z_1\in\Omega$ and $\eta\in\C$, find a holomorphic antiperiodic function~$f(z)=f^{[+\eta]}_{\Omega}(z,z_2)$ that has singularity $-\overline\eta\cdot (\pi i (z-z_2))^{-1}$ as $z\to z_2$ and satisfies boundary conditions $\Re f(z)=0$ if $z\in\dd_\top\Omega$ and $\Im g(z)=0$ if $z\in\dd_\bot\Omega$, where $g$ denotes a properly-chosen primitive of $f$. By symmetry, all statements involving Problem~\ref{bvp} discussed above also hold for this dual boundary value problem. In particular, the analogue of Theorem~\ref{Kdconvergencetheorem} reads as
\[
K^{-1}_{\Omega_\delta}(w_\delta,b_\delta)\,\to\, \eta_\mathfrak{b}\eta_\mathfrak{w}\Re[\,\overline\eta_\mathfrak{w}f^{[+\eta_\mathfrak{b}]}_\Omega(z_1,z_2)\,]\ \ \text{as}\ \ \delta\to 0.
\]
Comparing this with~\eqref{eq:K-1limit_fpmpm}, it is clear that similarly to~\eqref{eq:feta=fpmpm} one has a decomposition
\begin{equation}
\label{eq:f+eta=fpmpm}
f^{[+\eta]}_\Omega(z_1,z_2)=\tfrac12(\overline{\eta}f^{[++]}_\Omega(z_1,z_2)+\eta f^{[+-]}_\Omega(z_1,z_2))
\end{equation}
with the \emph{same} functions $f^{[+\pm]}_\Omega(z_1,z_2)$ as above. In particular, these functions are holomorphic in~$z_1$.

\begin{rema} 
Instead of referring to the discrete model, one can note that the boundary conditions of functions $f^{[\eta_1+]}_\Omega(z_1,\,\cdot\,)$ and $f^{[+\eta_2]}_\Omega(\,\cdot\,,z_2)$ imply that the integrals of the (periodic) differential form $\Re[f^{[\eta_1+]}_\Omega(z_1,\,\cdot\,)f^{[+\eta_2]}_\Omega(\,\cdot\,,z_2)]$ along the boundary components of~$\Omega$ vanish. This gives, by Cauchy's residue formula, the identity
\[
\Re[\overline{\eta}_2f^{[\eta_1 +]}_\Omega(z_1,z_2)]\,=\,\Re[\overline{\eta}_1f^{[+\eta_2]}_\Omega(z_1,z_2)]\ \ \text{for all}\ \ \eta_1,\eta_2\in\C,
\]
which also implies that the functions~$f^{[\pm\pm]}_\Omega(z_1,z_2)$ obtained from the decomposition~\eqref{eq:feta=fpmpm} and the ones arising from the decomposition~\eqref{eq:f+eta=fpmpm} are the same functions.
\end{rema}

Let us now discuss discrete analogues of the functions~$f^{[\pm\pm]}_\Omega(z_1,z_2)$. To do this, it is useful to pass to the dual graphs $\Omega_\delta^*$, in which vertices of~$\Omega_\delta$ become squares and dimer covers become domino tilings; see Fig.~\ref{fig:BWTempDomain}. This is a particular case of the \emph{t-embeddings} framework developed in~\cite{clrI}. 

Recall that discrete holomorphic functions
\[
F_w(b)=\overline{\eta}_{w}K^{-1}_{\Omega_\delta}(w,b)\in\eta_b\R,\quad b\in B(\Omega_\delta),
\]
are originally defined on black squares (faces) of~$\Omega_\delta^*$. As explained in~\cite[Section~8.4.1]{clrI}, one can naturally define ``true complex values'' of~$F_w$ on \emph{white} faces using the following procedure; see Fig.~\ref{fig:splitting-t-hol} for an illustration. First, consider a \emph{splitting} of each white face of~$\Omega_\delta^*$ into two triangles and denote the set of such triangles by $W^\circ_\spl(\Omega_\delta)$. Second, for each~$u^\circ\in W^\circ_\spl(\Omega_\delta)$ define the value $F_w(u^\circ)$ to be the sum of the two values $F_w(b)$ on two black squares adjacent to this triangle. (One of these two values is purely real while the other is purely imaginary). Further, let~$B_\spl^\circ(\Omega_\delta)$ be the set of black squares~$B(\Omega_\delta)$ together with all diagonals of white faces used in the splitting, interpreted as black bigons. For each such black bigon, let $\eta_b=e^{i\frac{\pi}{4}}$ or $\eta_b=e^{-i\frac{\pi}{4}}$ depending on the direction of this diagonal and its location compared to squares of~$B_0$ and~$B_1$ types. It is straightforward to check that the classical discrete holomorphicity condition~\eqref{eq:discrete-CR} is \emph{equivalent} to saying that one can also define values of~$F_w(b)\in \eta_b\R$ on black bigons so that the identity
\begin{equation}
\label{eq:Fb=PrFu}
F_w(b)=\Pr[F_w(u^\circ);\eta_b\R]=\tfrac12[\,F_w(u^\circ)+\eta_b^2\overline{F_w(u^\circ)}\,]
\end{equation}
holds for all $b\in B_\spl^\circ(\Omega_\delta)$ and $u^\circ\in W_\spl^\circ(\Omega_\delta)$ adjacent to each other, except on the white square~$w$. (This identity automatically holds if $b\in B(\Omega_\delta)$ is a black square due to the definition of~$F_w(u)$.) This procedure generalizes classical discrete holomorphic functions $F_w:B(\Omega_\delta)\to \R\cup i\R$ 
to \emph{t-white-holomorphic functions} $F_w:W_\spl^\circ(\Omega_\delta)\cup B_\spl^\circ(\Omega_\delta)\to\C$ in the terminology of~\cite{clrI}.

One can now do a similar procedure with respect to the white face $w$ in~\eqref{eq:Fb=PrFu}. As stated in~\cite[Proposition 3.12]{clrI}, there exist functions $F_{\Omega_\delta}^{[\pm\pm]}(u^\bullet,u^\circ)$ defined on $u^\bullet\in B_\spl^\bullet(\Omega_\delta)$ and $u^\circ\in W_\spl^\circ(\Omega_\delta)$ such that the identity
\begin{equation}
\label{eq:K-1=Fpmpm}
K_{\Omega_\delta}^{-1}(w,b)\,=\,\tfrac14\big[\,F_{\Omega_\delta}^{[++]}+\eta_w^2F_{\Omega_\delta}^{[-+]}+ \eta_b^2F_{\Omega_\delta}^{[+-]}+\eta_w^2\eta_b^2F_{\Omega_\delta}^{[--]}\,\big](u^\bullet,u^\circ)
\end{equation}
holds for each white square~$w\in W(\Omega_\delta)$ adjacent to $u^\bullet$ and each black square~$b\in B(\Omega_\delta)$ adjacent to $u^\circ$, provided that $w$ and $b$ are not adjacent to each other. Using this notation, Theorem~\ref{Kdconvergencetheorem} and~\eqref{eq:K-1=Fpmpm} can be further reformulated as the convergence
\begin{equation}
\label{eq:Fpmpm-conv}
F_{\Omega_\delta}^{[\pm\pm]}(u^\bullet_\delta,u^\circ_\delta)\,\to\,f^{[\pm\pm]}_\Omega(z_1,z_2)\ \ \text{if}\ \ u^\bullet_\delta\to z_1,\ u^\circ_\delta\to z_2
\end{equation}
as $\delta\to 0$, uniformly on compact subsets of~$(\Omega\times\Omega)\smallsetminus\diag$. 

\subsection{Proof of Theorem~\ref{Hnconvergencetheorem}} \label{subsec:Hnconvergence}
Due to the identity~\eqref{eq:n-dimers-P}, the passage from Theorem~\ref{Kdconvergencetheorem} to Theorem~\ref{Hnconvergencetheorem}
is relatively straightforward and can be performed as in Kenyon's original paper~\cite{kenyon-I}. Below we use the notation introduced in~\cite{clrI} and discussed in the previous section, which makes these computations more invariant with respect to the structure of the underlying grids. We also need the following lemma, which we prove after discussing the proof of Theorem~\ref{Hnconvergencetheorem}:

\begin{lmma}\label{lemma:K-1-Beurling-estimate} 
There exists an absolute constant $\beta>0$ such that for each $d>0$ the estimate 
\begin{equation}
\label{eq:K-1-lemma}
K^{-1}_{\Omega_\delta}(w_\delta,b_\delta)=O(\dist(w_\delta,\dd_\bot\Omega)^{\beta-1}\dist(b_\delta,\dd_\top\Omega)^{\beta-1})
\end{equation}
holds uniformly in $\delta$ provided that $|w_\delta-b_\delta|\ge d$. The same uniform estimate holds for the functions $F^{[\pm\pm]}_{\Omega_\delta}$ defined above and for their limits $f^{[\pm\pm]}_\Omega$. (In fact, one can take $\beta=\frac12$ due to~\cite{lawler-limic}.)
\end{lmma}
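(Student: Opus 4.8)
\textbf{Proof plan for Lemma~\ref{lemma:K-1-Beurling-estimate}.}
The plan is to combine the two Dirichlet boundary conditions~\eqref{eq:bc-ReFw} and~\eqref{eq:bc-ImGw} with the weak-Beurling estimate and a discrete Harnack inequality, essentially rerunning the boundary part of the proof of Proposition~\ref{uniformboundedness} while keeping track of the explicit power of $\dist(\,\cdot\,,\dd\Omega)$ rather than just boundedness. First I would fix $d>0$ and work with a white vertex $w_\delta$ and a black vertex $b_\delta$ with $|w_\delta-b_\delta|\ge d$. Write $F:=F_{w_\delta}=\overline\eta_{w_\delta}K^{-1}_{\Omega_\delta}(w_\delta,\,\cdot\,)$ as before. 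The key point is that $F$ is discrete holomorphic away from the two neighbors of $w_\delta$, that $\Re F$ vanishes on $\dd_\bot\Omega_\delta$, and that the discrete primitive $G$ satisfies $\Im G=0$ on $\dd_\top\Omega_\delta$ (Remark~\ref{rem:Gw-def}).

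The proof breaks into controlling the two marginal decays separately and then multiplying them. For the $b$-dependence near $\dd_\top\Omega$: by Proposition~\ref{uniformboundedness} we already know $\|F\|_{L^\infty(\oOmega^{(d)}_{w_\delta})}=O(\dist(w_\delta,\dd_\bot\Omega)^{\beta-1})$, or at worst $O(1)$; combined with the maximum principle for the harmonic function $\Im G$ (which vanishes on $\dd_\top\Omega_\delta$) and the weak-Beurling estimate~\cite[Proposition~2.11]{chelkak-smirnov-discretecomplex}, we get $\Im G = O(\dist(\,\cdot\,,\dd_\top\Omega_\delta)^\beta)$ times the same prefactor. A discrete Harnack inequality~\cite[Proposition~2.7]{chelkak-smirnov-discretecomplex} then upgrades this to a gradient bound, and since $|F|=|\nabla \Im G|$, this yields the factor $\dist(b_\delta,\dd_\top\Omega)^{\beta-1}$. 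For the $w$-dependence near $\dd_\bot\Omega$: here I would exploit the symmetry of the setup under exchanging black and white vertices and top and bottom boundaries (the ``dual boundary value problem'' discussed in Section~\ref{subsec:fpmpmdef}): the function $b\mapsto K^{-1}_{\Omega_\delta}(w_\delta,b)$ viewed in the $w$-variable, $w\mapsto \overline\eta_b K^{-1}_{\Omega_\delta}(w,b_\delta)$, is discrete holomorphic with $\Re$ vanishing on $\dd_\top\Omega_\delta$ and the primitive's $\Im$ vanishing on $\dd_\bot\Omega_\delta$; the same weak-Beurling-plus-Harnack argument gives the factor $\dist(w_\delta,\dd_\bot\Omega)^{\beta-1}$. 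Iterating: fix $b_\delta$ in the bulk, run the $w$-argument to see $|K^{-1}_{\Omega_\delta}(w_\delta,b_\delta)|=O(\dist(w_\delta,\dd_\bot\Omega)^{\beta-1})$ uniformly for $b_\delta$ at distance $\ge d/2$ from $w_\delta$ and from $\dd_\top\Omega$; then feed this improved bound as the bulk input into the $b$-argument to pick up the second factor. One must check that the intermediate region where both decays are invoked can be taken a fixed distance from both boundaries, which is why the constant $d$ appears and the estimate is only claimed for $|w_\delta-b_\delta|\ge d$.

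The transfer to the functions $F^{[\pm\pm]}_{\Omega_\delta}$ is immediate from~\eqref{eq:K-1=Fpmpm}: the four functions $F^{[\pm\pm]}_{\Omega_\delta}$ are fixed linear combinations (over $w$ and $b$ running through the four color classes in a fixed neighborhood) of values $K^{-1}_{\Omega_\delta}(w,b)$, so the same bound holds for them up to absolute constants, and the limits $f^{[\pm\pm]}_\Omega$ inherit it by passing to the limit via~\eqref{eq:Fpmpm-conv} (using that the estimate is uniform in $\delta$ and that $\dist(\,\cdot\,,\dd\Omega_\delta)\to\dist(\,\cdot\,,\dd\Omega)$). The sharp exponent $\beta=\tfrac12$ follows by invoking the optimal Beurling exponent on $\delta\Z^2$ from~\cite{lawler-limic} in place of the qualitative one, exactly as remarked after~\eqref{eq:ReFw-Beurling}. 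The main obstacle I expect is bookkeeping the two-step iteration cleanly — making sure the bulk bound used as input to the second Beurling step is itself uniform in $\delta$ and holds on a region staying a definite distance from the relevant boundary component — rather than any essentially new analytic input; all the tools (weak-Beurling, discrete Harnack, maximum principle, the black/white symmetry) are already in place from the proof of Proposition~\ref{uniformboundedness} and Section~\ref{subsec:fpmpmdef}.
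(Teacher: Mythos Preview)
Your iteration scheme handles the case where $w_\delta$ is near $\dd_\bot\Omega$ and $b_\delta$ is near $\dd_\top\Omega$ (and its dual), but it does not cover the case where \emph{both} $w_\delta$ and $b_\delta$ lie near the \emph{same} boundary component, say $\dd_\top\Omega$, while still satisfying $|w_\delta-b_\delta|\ge d$. In that situation the estimate~\eqref{eq:K-1-lemma} reduces to $K^{-1}_{\Omega_\delta}(w_\delta,b_\delta)=O(\dist(b_\delta,\dd_\top\Omega)^{\beta-1})$, and your ``$b$-argument'' would need to apply the weak-Beurling estimate to $\Im G_{w_\delta}$ in a top strip that necessarily contains $w_\delta$. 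But $\Im G_{w_\delta}$ is not harmonic at $w_\delta$ when $w_\delta\in W_0$ (it has a logarithmic blow-up there), and is not even single-valued near $w_\delta$ when $w_\delta\in W_1$ due to the additive monodromy in Remark~\ref{rem:Gw-def}. So the maximum principle / weak-Beurling step fails, and there is no ``bulk'' region between $w_\delta$ and $\dd_\top\Omega$ through which to route the iteration.

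The paper treats this case separately and it is the main content of the proof. For $w_\delta\in W_0$, one exploits that $\Im G_{w_\delta}$ \emph{is} globally well-defined (just not harmonic at $w_\delta$) and compares it to the antiperiodic discrete Green's function $\Im\widetilde G_{w_\delta}$ in an auxiliary ``top half'' cylinder $\widetilde\Omega_\delta$ with white-Temperleyan combinatorics on both boundaries; the difference is genuinely harmonic and bounded, while $\Im\widetilde G_{w_\delta}$ is controlled via random-walk crossing estimates. For $w_\delta\in W_1$, one cannot use $\Im G_{w_\delta}$ directly and instead feeds the $W_0$ result into the dual functions $\overline\eta_b K^{-1}_{\Omega_\delta}(\,\cdot\,,b)$ (whose real parts live on $W_0$ vertices), applying weak-Beurling and Harnack once more. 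This Green's-function comparison and the $W_0$/$W_1$ case split are genuinely new ingredients beyond what appears in Proposition~\ref{uniformboundedness}, not just bookkeeping.
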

\begin{proof}[Proof of Theorem \ref{Hnconvergencetheorem}]
    The proof follows~\cite[Section 7.2]{clrI}.  Recall the notation 
        \[
        H_{n,\Omega_\delta}(v_{1,\delta},\dots,v_{n,\delta})=\E[\hbar_{\Omega_\delta}(v_{1,\delta})\cdots\hbar_{\Omega_\delta}(v_{n,\delta})],
        \]
        for the correlations of the fluctuations of the height function on $\Omega_\delta$, where $v_{k,\delta}\to v_k$ are vertices of the dual graph of~$\Omega_\delta$, which we denote by $\TT_\delta$ in what follows. Let $v_{1,\delta}^0,\dots,v_{n,\delta}^0\in\dd_\bot\Omega_\delta$, where each $v_{k,\delta}^0\to v_k^0\in\dd_\bot\Omega$ as $\delta\to 0$, and let $\gamma_{k,\delta}$ be simple nonintersecting discrete paths running over edges of $\TT_\delta$ from~$v_{k,\delta}^0$ to~$v_{k,\delta}$. Denote by $v_{k,\delta}^{m_k}$, $m_k=0,\ldots,M_k$ the vertices of paths~$\gamma_{k,\delta}$ and let $(b_{k,\delta}^{m_k}w_{k,\delta}^{m_k})$, $m_k=1,\ldots,M_k$ be the corresponding edges of~$\Omega_\delta$ that are dual to the edges of~$\gamma_{k,\delta}$. In our setting, $K_{\Omega_\delta}(b,w)=d\TT_\delta((bw)^*)$ when $b\sim w$ and $K_\delta(b,w)=0$ when $b\not\sim w$, which agrees with the notation of~\cite{clrI}. By definition of $\hbar_{\Omega_\delta}$, we have~$\hbar_{\Omega_\delta}(v_{k,\delta}^0)=0$ for each $k=1,\ldots,n$. As in~\cite{kenyon-I}, one can deduce from the identity~\eqref{eq:n-dimers-P} that
    \[
    H_{n,\Omega_\delta}(v_{1,\delta},\dots,v_{n,\delta})=
    \sum_{m_1=1}^{M_1}\ldots\sum_{m_n=1}^{M_n}\det\big[\1_{j\neq k}K_{\Omega_\delta}^{-1}(w_{j,\delta}^{m_j},b_{k,\delta}^{m_k})\big]_{j,k=1}^n\prod_{k=1}^n\big(\pm d\TT_\delta((b_{k,\delta}^{m_k} w_{k,\delta}^{m_k})^*)\big)\,,
    \]
    where the $\pm$ sign depends on whether $b_{k,\delta}^{m_k}$ is to the left or right of $\gamma_{k,\delta}$, and the diagonal $j=k$ is excluded as we are analyzing the fluctuations $\hbar_{\Omega_\delta}$ and not the height functions $h_{\Omega_\delta}$ themselves. From this we can conclude, using equation~\eqref{eq:K-1=Fpmpm} and the convergence~\eqref{eq:Fpmpm-conv} similarly to the proof of~\cite[Proposition~7.2]{clrI}, that
    \[
    H_{n,\Omega_\delta}(v_{1,\delta},\dots,v_{n,\delta})\;\underset{\delta\,\downarrow\,0}{\longrightarrow}\; \frac{1}{4^n}\!\!\sum_{s_1,\ldots,s_n\in\{\pm\}}\int_{v_1^0}^{v_1}\cdots\int_{v_n^0}^{v_n}\det\big[\1_{j\neq k}f_\Omega^{[s_j,s_k]}(z_j,z_k)\big]_{j,k=1}^n\prod_{k=1}^ndz_k^{[s_k]}
    \] 
    provided that the contribution of near-to-boundary parts of the paths~$\gamma_k$ to the sum is negligible and that the latter multiple integral converges near $\dd\Omega$. (Similarly to~\cite{clrI}, in this passage we use the fact that square grids are a particular case of t-embeddings with \emph{small origami maps}, which implies that only the terms that contain the factors $d\TT_\delta((bw)^*)$ or $\eta_b^2\eta_w^2d\TT_\delta((bw)^*)=d\overline{\TT_\delta((bw)^*)}$ but not $\eta_b^2d\TT_\delta((bw)^*)$ or $\eta_w^2d\TT_\delta((bw)^*)$ survive in the limit $\delta\to 0$; cf.~\cite[Section~3.2]{clrII}.) It follows from Lemma~\ref{lemma:K-1-Beurling-estimate} that for each permutation~$\sigma\in S_n$ without fixed points we have the estimate
    \[
    \prod_{j=1}^n K^{-1}_{\Omega_\delta}(w_{j,\delta},b_{\sigma(j),\delta})\,=\,O\biggl(\prod_{j=1}^n\dist((w_{j,\delta}b_{j,\delta}),\dd\Omega)^{\beta-1}\biggr)
    \]
    uniformly in $\delta$, which ensures the aforementioned convergence of the multiple sums over discrete paths~$\gamma_{k,\delta}$ to the multiple integrals over continuous paths~$\gamma_k$ joining~$v_k^0$ and~$v_k$ provided that the latter paths approach the boundary of~$\Omega$ non-tangentially. This completes the proof.
    \end{proof}
    
\begin{proof}[Proof of Lemma~\ref{lemma:K-1-Beurling-estimate}] Recall that from Proposition~\ref{uniformboundedness} we already know that $K^{-1}_{\Omega_\delta}(w_\delta,b_\delta)=O(1)$ if none of the~$w=w_\delta$ and~$b=b_\delta$ are close to~$\dd\Omega$.

Suppose now that only $b$ is close to $\dd\Omega$. In this case we can consider the discrete holomorphic functions~$F_w(b)=\overline{\eta}_wK^{-1}_{\Omega_\delta}(w,b)$ and proceed in a manner similar to the proof of Proposition \ref{uniformboundedness} via weak-Beurling and Harnack estimates. Namely, if $b$ is close to $\dd_\bot\Omega_\delta$, then $\Re F_w=O(\dist(\,\cdot\,,\dd_\bot\Omega)^\beta)$ and $|\grad F_w|=O(\dist(\,\cdot\,,\dd_\bot\Omega)^{\beta-1})$, hence $\Im F_w=O(1)$.  If instead $b$ is close to $\dd_\top\Omega$, then $\Im(G_w)=O(\dist(\,\cdot\,,\dd_\top\Omega)^\beta)$ and hence $F_w=O(\dist(\,\cdot\,,\dd_\top\Omega)^{\beta-1})$ as desired. 

If only~$w$ is close to $\dd\Omega$, then one can apply a similar argument by exchanging the roles of black and white vertices and considering discrete holomorphic functions~$\overline{\eta}_bK^{-1}_{\Omega_\delta}(\,\cdot\,,b)$ instead. Moreover, if $b$ and~$w$ are close to \emph{different} components of~$\dd\Omega$, then one can start with such an estimate---namely, $F_w(b)=O(\dist(w,\dd_\bot\Omega)^{\beta-1})$ for $b$ in the bulk of~$\Omega$---and apply similar arguments to prove that~$F_w(b)=O(1)$ when $w$ is close to $\dd_\top\Omega$ and $b$ is close to $\dd_\bot\Omega$, and that the estimate~\eqref{eq:K-1-lemma} holds when $w$ is close to $\dd_\bot\Omega$ and $b$ is close to $\dd_\top\Omega$.

Thus, it only remains to prove~\eqref{eq:K-1-lemma} in the situation when both~$w$ and~$b$ are close to the \emph{same} boundary component of~$\Omega$. By symmetry, it is sufficient to consider the case when both of these vertices are close to~$\dd_\top\Omega$ and we need to show that $F_w(b)=O(\dist(b,\dd_\top\Omega)^{\beta-1})$. 

Let us first consider the case when~$w$ has type~$W_0$. Recall from Remark~\ref{rem:Gw-def} that in this case $\Im G_w$ is a well-defined antiperiodic function \emph{everywhere} in~$\Omega_\delta$ that has zero Dirichlet boundary conditions on~$\dd_\top\Omega_\delta$ and is harmonic everywhere in~$\Omega_\delta$ except at~$w=w_\delta$. Let $\widetilde{\Omega}_\delta$ be the ``top half'' of $\Omega_\delta$ whose top boundary $\dd_\top\widetilde{\Omega}$ equals $\dd_\top\Omega$ and whose bottom boundary $\dd_\bot\widetilde{\Omega}_\delta$ has the same white-Temperleyan combinatorics as the top one. Let~$\Im \widetilde{G}_w$ denote the \textit{antiperiodic} Green's function defined on vertices of type~$W_0$ in $\widetilde\Omega$, with zero Dirichlet boundary conditions. The difference $\Im G_w-\Im \widetilde{G}_w$ is discrete harmonic in~$\widetilde{\Omega}_\delta$, vanishes on~$\dd_\top\widetilde{\Omega}_\delta$, and is uniformly bounded on~$\dd_\bot\widetilde{\Omega}_\delta$ since $\Im \widetilde{G}_w$ vanishes there and we already know the uniform estimate~$\Im G_w(b)=O(1)$ for $b$ in the bulk of~$\Omega$ and $w$ close to the top boundary. Therefore, 
\[
\Im G_w(u)-\Im\widetilde{G}_w(u)\,=\,O(1)\ \ \text{for all}\ \ u\in W_0(\widetilde{\Omega}_\delta)\,.
\]
It is also not hard to see that $\Im\widetilde{G}_w(u)=O(1)$ provided that~$u$ stays at a definite distance from~$w$. To prove this, one can first note that $|\Im G_w|$ is bounded from above by the \emph{periodic} Green's function in~$\widetilde{\Omega}_\delta$ and then use standard crossing estimates for random walks on the square grid to estimate the latter. Combining this with the weak-Beurling estimate, we see that $\Im G_w(u)\,=\,O(1)$ and hence
\[
\Im G_w(u)\,=\,O(\dist(u;\partial\Omega_\delta)^\beta)\ \ \text{for all}\ \ u\in W_0(\widetilde{\Omega}_\delta)\ \ \text{such that}\ \ |u-w|\ge d.
\]
It then follows from the discrete Harnack estimate that $F_w=O(\dist(\,\cdot\,,\dd_\top\Omega)^{\beta-1})$ as desired. 

Let us now handle the case when~$w$ has type~$W_1$. To this end, consider the discrete holomorphic functions~$\overline{\eta}_b K^{-1}_{\Omega_\delta}(\,\cdot\,,b)$ on white vertices of~$\Omega_\delta$. We know from above that their \emph{real parts}, which are defined on vertices of type~$W_0$, are uniformly bounded by $C_b=O(\dist(b,\dd_\top\Omega)^{\beta-1})$ and have zero Dirichlet boundary conditions on $\dd_\top\Omega_\delta$. Applying weak-Beurling and Harnack estimates as above, one concludes that the discrete gradients of these functions are  $O(C_b\dist(\,\cdot\,,\dd_\top\Omega_\delta)^{\beta-1})$. Integrating this estimate along paths going from the bulk of~$\Omega_\delta$ to~$w$ as before, we conclude that the imaginary parts of these functions, which are defined on vertices of type~$W_1$, admit a similar upper bound $O(C_b)$. This completes the proof of the estimate~\eqref{eq:K-1-lemma}.

The functions~$F^{[\pm\pm]}_{\Omega_\delta}(u^\bullet,u^\circ)$ can be written as sums of four terms~$K^{-1}_{\Omega_\delta}(w,b)$ for two~$w$'s adjacent to~$u^\bullet$ and two~$b$'s adjacent to~$u^\circ$, which means that they satisfy the same uniform (in~$\delta$) upper bound. Finally, the functions~$f^{[\pm\pm]}_\Omega$ are limits of the $F^{[\pm\pm]}_{\Omega_\delta}$ as $\delta\to 0$ and hence admit the same bound. (Alternatively, one can reproduce the proof given above directly in the continuous setup.)
\end{proof}

\section{Identification of the limit}\label{section:identification}
\setcounter{equation}{0}

This section is devoted to the proof of Theorem~\ref{mainthrm}. Let
\[
\phi:T^+\to \Omega,\quad\text{where}\quad T^+:=\R/\cm\Z\times(0,\pi),
\]
be a conformal isomorphism of a straight cylinder~$T^+$ with circular boundary components onto $\Omega$. (Note that such a uniformization always exists and that the conformal modulus of~$\Omega$ is $\pi/\cm$.) Also, denote $T^-:=\R/\cm\Z\times (-\pi,0)$, and let
\[
T\;:=\;\R/\cm\Z\times \R/2\pi\Z\;=\;\C/\Lambda\,,
\]
where~$\Lambda:=\{m\omega_1+n\omega_2:m,n\in\Z\}$ with~$\omega_1=\cm$ and~$\omega_2=2\pi i$, be the \emph{Schottky double} of $T^+$.

\subsection{Uniqueness of solutions of Problem~\ref{bvp}} 
\label{subsec:uniqueness} 

\begin{lmma}\label{uniqueness}
    If a solution $f_\Omega^{[\eta]}$ to Problem \ref{bvp} exists, then it is unique.  In particular, $f_\Omega^{[0]}\equiv 0$.
\end{lmma}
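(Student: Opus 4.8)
The plan is to reduce the uniqueness statement to a vanishing argument for the difference of two solutions, which by linearity is an antiperiodic holomorphic function $f$ on $\Omega$ (no singularity, i.e.\ the $\eta=0$ case) satisfying the two boundary conditions of Problem~\ref{bvp}. I would then show any such $f$ vanishes identically, which simultaneously proves uniqueness for every $\eta$ and the claim $f_\Omega^{[0]}\equiv 0$.

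First I would transport everything to the straight cylinder via the conformal map $\phi:T^+\to\Omega$. Since the boundary condition on $\dd_\bot\Omega$ is a condition on the \emph{function} $\Re f$ while the condition on $\dd_\top\Omega$ is a condition on the \emph{primitive} $\Im g$ (hence on the differential form $f(z)\,dz$), these transform differently: $\tilde f := (f\circ\phi)\cdot\phi'$ on $T^-$ near the top and $\tilde f := f\circ\phi$ near the bottom. This is exactly the lack of conformal covariance flagged in the remark after Theorem~\ref{Kdconvergencetheorem}, so the cleanest route is probably \emph{not} to push to a single function on $T^+$ but to use a pairing/energy argument intrinsically. Concretely, I would consider the real differential form $\Re[f(z)\overline{f(z)}\,dz] = |f|^2\,\Re(dz)$ — wait, that is not closed; instead the right object is $f(z)^2\,dz$, whose real part is a closed $1$-form on $\Omega$ because $f$ is holomorphic (so $f^2$ is holomorphic and $f^2\,dz$ is a closed holomorphic $1$-form, hence $\Re(f^2\,dz)$ is closed). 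One checks that $\oint_{\dd_\bot\Omega}\Re(f^2\,dz)=0$ because $\Re f=0$ on $\dd_\bot\Omega$ forces $f\,dz$ to be purely imaginary there (the tangential derivative of the purely imaginary function $i\Im f$ is purely imaginary, while on the boundary $dz$ is tangential), so $f^2\,dz$ is real there and... here one must be careful; the slicker statement is that $\oint_{\dd_\bot\Omega} f(z)^2\,dz$ is real and $\oint_{\dd_\top\Omega} f(z)^2\,dz$ is purely imaginary (the latter using $\Im g=0$ on $\dd_\top\Omega$, i.e.\ $g$ real there, so $g'\cdot(dz)=f\,dz$ integrates against a real parametrization of $g$). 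Since by Cauchy's theorem (no singularity, $f$ antiperiodic so $f^2$ periodic) the sum of these two boundary integrals is zero, and a real number plus a purely imaginary number vanishes only if both vanish, we get $\oint_{\dd_\bot\Omega} f^2\,dz = 0$ with, moreover, the integrand being a real nonnegative-up-to-sign multiple of $|f|^2$ along $\dd_\bot\Omega$; arguing similarly on $\dd_\top\Omega$ for $g$ forces $f\equiv 0$ on the boundary, and then $f\equiv 0$ in $\Omega$ by the identity principle, or more robustly by the maximum principle applied to the harmonic functions $\Re f$ and $\Im g$.

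The main obstacle I anticipate is making the boundary-integral bookkeeping genuinely rigorous given only $L^2$/non-tangential regularity of $f$ and $g$ up to $\dd\Omega$, rather than smoothness — one wants a Green's-identity / reflection argument that does not assume more boundary regularity than Problem~\ref{bvp} provides. A robust way to handle this is Schwarz reflection: near $\dd_\bot\Omega$ (after uniformizing so this component is a straight circle) the condition $\Re f=0$ lets one reflect $f$ by $f(z)\mapsto -\overline{f(\bar z)}$ (in suitable coordinates) to a holomorphic function on a doubled domain; near $\dd_\top\Omega$ the condition $\Im g=0$ lets one reflect the \emph{primitive} $g$ by $g(z)\mapsto \overline{g(\bar z)}$ and then differentiate. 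Reflecting across both boundary components yields a holomorphic function (or section of a line bundle) on the Schottky double $T$ of the cylinder, which is compact; the antiperiodicity becomes a twist by a nontrivial flat line bundle (a theta-characteristic-type twist coming from the $-1$ monodromy), and a holomorphic section of such a degree-zero nontrivial line bundle on an elliptic curve is forced to be zero — that is the one-line conclusion. I would also double-check that the two reflections are compatible (they act on overlapping collar neighborhoods only through $f$ itself, which is single-valued there) so that the glued object is well defined on all of $T$. Finally, I'd note that the argument is purely continuous and self-contained, as promised in the remark preceding the lemma, so it can legitimately be invoked in Section~\ref{section:convergence} without circularity.
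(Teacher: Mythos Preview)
Your high-level strategy---reduce to $\eta=0$, pull back to the cylinder $T^+$, and use Schwarz reflection to pass to the Schottky double $T$---matches the paper's. But the sentence ``the two reflections are compatible (they act on overlapping collar neighborhoods only through $f$ itself, which is single-valued there)'' is exactly where the argument breaks. Reflecting across $\dd_\bot T^+$ uses $\Re(f_\Omega\circ\phi)=0$ and returns $f_\Omega\circ\phi$ on $T^+$; reflecting across $\dd_\top T^+$ uses $\Im g=0$ on the \emph{primitive}, so after differentiating you return $(f_\Omega\circ\phi)\cdot\phi'$ on $T^+$. These differ by the factor $\phi'$, so the glued object is \emph{not} a function on $T$, and the line bundle it defines is not merely the antiperiodicity twist---it also carries a transition function $\phi'$ along the cut. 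Your ``one-line conclusion'' about nontrivial degree-zero bundles does not apply until you have identified this bundle, which is precisely the hard part created by the lack of conformal covariance you yourself flagged.

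The paper confronts this mismatch head-on rather than gluing. It defines $f:=f_\Omega\circ\phi$ on a bottom piece $T^+_\bot$ and $f:=(f_\Omega\circ\phi)\cdot\phi'$ on a top piece $T^+_\top$, reflects each separately to cylinders $T_\bot$ and $T_\top$ covering $T$, and observes that antiperiodicity forces at least one zero of $f$ on the real axis. Assuming $f\not\equiv 0$, it then integrates $\Delta\log|f|$ over $T_\bot\cup T_\top$: the bulk contribution is $\ge 2\pi$ from that zero, while Green's formula reduces the sum to the boundary jump $-2\int\partial_y\log|\phi'|\,dx = 2\int\partial_x\arg\phi'\,dx$, which is the total rotation of a closed loop in the cylindrical domain $\Omega$ and hence zero. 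That contradiction is what replaces your missing line-bundle computation; without an argument of this kind (or an honest degree count for the $\phi'$-twisted bundle), the proposal has a genuine gap. Your earlier $f^2\,dz$ idea runs into the same wall: the two boundary integrals are computed in different conformal frames and do not combine cleanly.
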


\begin{proof} If $f_\Omega^{[\eta]}$ and $\widetilde{f}_\Omega^{[\eta]}$ are two solutions of Problem~\ref{bvp} with the same $z_1$ and $\eta$, then the difference $f_\Omega^{[\eta]}-\widetilde{f}_\Omega^{[\eta]}$ has no singularity at $z_1$ and satisfies the same boundary conditions as in Problem~\ref{bvp}. Thus, it is sufficient to prove that each holomorphic antiperiodic function $f_\Omega=f^{[0]}_\Omega(z_1,\,\cdot\,):\Omega\to\C$ satisfying these boundary conditions is identically zero.

Choose $0<y_0<\pi$ such that the function $f_\Omega\circ\phi$ has no zeros on the line $\Im z=y_0$. This line splits the cylinder $T^+$ into two components $T^+_\bot:=\R/\cm\Z\times (0,y_0)$ and $T^+_\top:=\R/\cm\Z\times(y_0,\pi)$. Denote
\[
T_\bot:=\R/\cm\Z\times (-y_0,y_0) \quad \text{and}\quad T_\top:=\R/\cm\Z\times (y_0,2\pi-y_0);
\]
we view these cylindrical domains as subsets of the torus~$T$. Note that $T_\bot\cup T_\top$ is the full torus~$T$ cut along two horizontal circles $\Im z=\pm y_0$.
Now let
\[
f(z):=\begin{cases}f_\Omega(\phi(z)) & \text{if}\ z\in T^+_\bot,\\
f_\Omega(\phi(z))\cdot\phi'(z) & \text{if}\ z\in T^+_\top\,.\end{cases}
\]
The function $f$ has Dirichlet boundary conditions~$\Re f(z)=0$ on the bottom boundary of~$T^+$. This allows us to continue $f$ to~$T_\bot$ by Schwarz reflection with respect to~$\R$. Similarly, the primitive $g$ of~$f$ has Dirichlet boundary conditions~$\Im g(z)=0$ on the top boundary of~$T^+$. This allows us to continue $g$ and hence $f$ to $T_\top$ by Schwarz reflection with respect to the line $\Im z= \pi$. 

Recall that~$f$ is an \emph{antiperiodic} function and that $\Re f(z)=0$ if $\Im z=0$. By continuity, there is a point $z_0\in\R/\cm\Z$ such that $\Im f(z_0)=0$. This means that $f$ has at least one zero in~$T_\bot$. Assume for the sake of contradiction that $f\not\equiv 0$. Since $f$ is holomorphic, the periodic function~$\log|f|$ is subharmonic (and, moreover, harmonic except at zeros of~$f$) in both $T_\bot$ and $T_\top$. Therefore,
\[
\iint_{T_\bot}\Delta\log|f(z)|\,d^2z+\iint_{T_\top}\Delta\log|f(z)|\,d^2z\,\ge\,2\pi\,>\,0\,,
\]
where $2\pi$ is the contribution of~$z_0$. On the other hand, by Green's formula and the symmetry $|f(\overline{z})|=|f(z)|$, this sum also equals
\begin{align*}
2\int_{\R/\cm\Z} (\partial_y \log|f_\Omega(\phi (x\!+\!iy_0))|-\partial_y\log|f_\Omega(\phi(x\!+\!iy_0))\phi'(x\!+\!iy_0)|)dx\\
=\ -2\int_{\R/\cm\Z} \partial_y\log|\phi'(x\!+\!iy_0)|dx\ =\ 2\int_{\R/\cm\Z}\partial_x\arg \phi'(x\!+\!iy_0)dx\,.
\end{align*}
The last integral is the total rotation angle of the closed curve $\{\phi(x\!+\!iy_0),x\in\R/\cm\Z\}$ in $\Omega$, which is zero and thus cannot be strictly positive. This is a contradiction.
\end{proof}

\subsection{Differential forms~$\AA_n$ on $T^+$, their primitives $h_n$, and the sequence of moments~$M_n$} 
\label{subsec:An,hn,Mn}
Let us define the functions $f^{[\pm\pm]}:T^+\times T^+\to\C$ by
\begin{equation}
\label{eq:fpmpm-inT-def}
f^{[s_1,s_2]}(z_1,z_2)\,:=\,f^{[s_1,s_2]}_\Omega(\phi(z_1),\phi(z_2))\cdot (\phi^{[s_1]}(z_1))^{1/2}(\phi^{[s_2]}(z_2))^{1/2},\qquad s_1,s_2\in\{\pm\},
\end{equation}
where we use the notation $\phi^{[+]}(z):=\phi'(z)$ and $\phi^{[-]}(z):=\overline{\phi'(z)}$ for brevity. (Note that one can define a single-valued branch of the square root~$(\phi')^{1/2}:T^+\to\C\smallsetminus\{0\}$ since $\phi:T^+\to\Omega$ is a conformal map and the increment of $\arg\phi'$ along the horizontal loop equals $0$.) One can easily check that the functions $f^{[\pm\pm]}$ have similar holomorphicity/antiholomorphicity properties and satisfy the same asymptotics~\eqref{eq:fpmpm-sing} as the functions~$f_\Omega^{[\pm\pm]}$.  

\begin{rema}
    One can alternatively define 
    \begin{equation}
    \label{eq:fpmpm-inT-def-alt}
    f^{[s_1,s_2]}(z_1,z_2):=f^{[s_1,s_2]}_\Omega(\phi(z_1),\phi(z_2))\cdot(\phi^{[s_1]}(z_1))^{\alpha}(\phi^{[s_2]}(z_2))^{1-\alpha}
    \end{equation}
    for any $\alpha\in\R$. We emphasize that our choice of $\alpha=\frac12$ is \emph{not} canonical. Moreover, in our setup there is \emph{no} canonical~$\alpha$ as the functions $f_\Omega^{[\pm\pm]}$ are \emph{not} conformally covariant due to different boundary conditions on~$\dd_\bot\Omega$ and $\dd_\top\Omega$. The particular choice of $\alpha$ does not matter for what follows as we will not be using any information about the boundary behavior of the functions $f^{[\pm\pm]}$.
\end{rema}

Let $\AA_n(z_1,\ldots,z_n)$ be differential forms defined for $z_1,\ldots,z_n\in T^+$ similarly to~\eqref{An}, with $f^{[\pm\pm]}_\Omega$ replaced by~$f^{[\pm\pm]}$. If the latter functions are defined by~\eqref{eq:fpmpm-inT-def}, or more generally by~\eqref{eq:fpmpm-inT-def-alt}, then the two differential forms $\AA_{n,\Omega}$ (defined on $\Omega$) and $\AA_n$ (defined on $T^+$) are conformally equivalent. In particular, for all pairwise distinct $z_1,\ldots,z_n\in \overline{T^+}$ and~$z_1^0,\ldots,z_n^0\in\dd_\bot T^+$ we have the identity
\[
h_n(z_1,\ldots,z_n)\,:=\,h_{n,\Omega}(\phi(z_1),\ldots,\phi(z_n))\,=\,\int_{z_1^0}^{z_1}\ldots\int_{z_n^0}^{z_n}\AA_n,
\]
provided that the $n$ paths of integration from~$z_k^0$ to~$z_k$ do not intersect each other. 

For brevity, let us define
\[
M_n\,:=\,h_n\big|_{\dd_\top T^+\times\dots \times\dd_\top T^+}\,=\,h_{n,\Omega}\big|_{\dd_\top\Omega\times\dots\times\dd_\top\Omega}\,=\,M_{n,\Omega},\quad n\in\N.
\]
By definition,~$\AA_1=0$ and hence~$h_1=0$ and $M_1=0$. We know from Theorem~\ref{Hnconvergencetheorem} that $M_n$ is the limit of the $n$th moments of the centered height differences between $\dd_\top\Omega_\delta$ and $\dd_\bot\Omega_\delta$. In particular, this implies that $M_2\ge 0$. 

Let $G(z_1,z_2)$ be the \emph{positive} Green's function on the straight cylinder~$T^+$, with zero Dirichlet boundary conditions on both boundary components, and normalized so that as $z_2\to z_1$ we have $G(z_1,z_2)=-(2\pi)^{-1}\log|z_2-z_1|+O(1)$. 
Further, let $\hm_\top(z)=\pi^{-1}\Im z$ denote the harmonic measure of $\dd_\top T^+$ in $T^+$.

\begin{prop} \label{prop:h2h3}
The following identities hold for all $z_1,z_2,z_3\in T^+$:
\begin{align}
        h_2(z_1,z_2)\,&=\,\pi^{-1}G(z_1,z_2)+M_2\hm_\top(z_1)\hm_\top(z_2), \label{h2}\\
        h_3(z_1,z_2,z_3)\,&=\,M_3\hm_\top(z_1)\hm_\top(z_2)\hm_\top(z_3). \label{h3}
    \end{align}
\end{prop}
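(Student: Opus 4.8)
The plan is to pin down $h_2$ and $h_3$ as the unique solutions of explicit Dirichlet‑type problems on $T^+$, using only the holomorphicity/antiholomorphicity of the $f^{[\pm\pm]}$ and their local behaviour (established above), together with the combinatorial fact that $\hbar_{\Omega_\delta}$ is constant on the face bounded by $\dd_\top\Omega_\delta$. First, write the two forms out: for $n=2$ the determinant in~\eqref{An} is $-f^{[s_1,s_2]}(z_1,z_2)f^{[s_2,s_1]}(z_2,z_1)$, so $\AA_2=-\tfrac{1}{16}\sum_{s_1,s_2}f^{[s_1,s_2]}(z_1,z_2)f^{[s_2,s_1]}(z_2,z_1)\,dz_1^{[s_1]}dz_2^{[s_2]}$, while for $n=3$ it is the sum of the two $3$‑cycle monomials $f^{[s_1,s_2]}(z_1,z_2)f^{[s_2,s_3]}(z_2,z_3)f^{[s_3,s_1]}(z_3,z_1)$ and $f^{[s_1,s_3]}(z_1,z_3)f^{[s_3,s_2]}(z_3,z_2)f^{[s_2,s_1]}(z_2,z_1)$. \emph{Step 1 (harmonicity).} Since $f^{[++]}$ is holomorphic and $f^{[--]}$ antiholomorphic in each variable, while $f^{[-+]}$ is holomorphic in the second and antiholomorphic in the first variable (and $f^{[+-]}=\overline{f^{[-+]}}$ reverses this), the coefficient of $dz_k$ in $\AA_n$ is holomorphic in $z_k$ and that of $d\bar z_k$ antiholomorphic in $z_k$, away from the diagonals; as $\partial_{z_k}h_n$ is this $dz_k$‑coefficient integrated over the remaining paths, $h_n$ is harmonic in each $z_k$ off the diagonals. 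The crucial point for $n=3$ is that $\AA_3$ has in fact \emph{no} diagonal singularity: near $z_2=z_1$ the only poles come from $f^{[++]}(z_1,z_2)$ in the first $3$‑cycle and $f^{[++]}(z_2,z_1)$ in the second (and their $f^{[--]}$ conjugates), and because $f^{[++]}(z_1,z_2)=\tfrac{2}{\pi i}(z_2-z_1)^{-1}+O(1)$ these two residues are exactly opposite; the remainder is bounded. Hence $\AA_3$ extends to a smooth closed form on all of $(T^+)^3$ and $h_3$ is harmonic in each variable everywhere in $T^+$.

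\emph{Step 2 (singularity and boundary conditions).} The only pole of $\AA_2$ at $z_2=z_1$ is the $s_1=s_2$ part $-\tfrac{1}{4\pi^2}\big[(z_2-z_1)^{-2}dz_1dz_2+(\bar z_2-\bar z_1)^{-2}d\bar z_1 d\bar z_2\big]$, whose iterated integral is $-\tfrac{1}{2\pi^2}\log|z_2-z_1|+O(1)=\pi^{-1}G(z_1,z_2)+O(1)$, so $h_2(z_1,\,\cdot\,)-\pi^{-1}G(z_1,\,\cdot\,)$ extends harmonically across $z_1$. Next, $h_n$ vanishes whenever some $z_k\in\dd_\bot T^+$, directly from the definition $h_n=\int_{z_1^0}^{z_1}\!\cdots$ with $z_k^0\in\dd_\bot T^+$ and path‑independence. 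Finally, $z_n\mapsto h_n(z_1,\dots,z_{n-1},z_n)$ is constant on $\dd_\top T^+$ for every fixed $z_1,\dots,z_{n-1}$: indeed $\hbar_{\Omega_\delta}$ takes a single random value on the large face bounded by $\dd_\top\Omega_\delta$, so $H_{n,\Omega_\delta}(v_1,\dots,v_{n-1},v_n)$ does not depend on the choice of $v_n$ among the vertices of that face; passing to the limit via Theorem~\ref{Hnconvergencetheorem} (which covers boundary points) and transporting by $\phi$ gives the claim.

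\emph{Step 3 (assembling).} For $n=2$, by Steps~1--2 the function $h_2(z_1,\,\cdot\,)$ is harmonic on $T^+\smallsetminus\{z_1\}$ with the logarithmic singularity of $\pi^{-1}G(z_1,\,\cdot\,)$, vanishes on $\dd_\bot T^+$, and equals some constant $c(z_1)$ on $\dd_\top T^+$; the maximum principle then forces $h_2(z_1,z_2)=\pi^{-1}G(z_1,z_2)+c(z_1)\hm_\top(z_2)$. Using the symmetry $h_2(z_1,z_2)=h_2(z_2,z_1)$ and that of $G$, one gets $c(z_1)\hm_\top(z_2)=c(z_2)\hm_\top(z_1)$, hence $c(z_1)=M\hm_\top(z_1)$ for a constant $M$; restricting both arguments to $\dd_\top T^+$ (where $G=0$, $\hm_\top=1$) identifies $M=M_2$, which is~\eqref{h2}. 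For $n=3$, $h_3(z_1,z_2,\,\cdot\,)$ is harmonic on $T^+$, vanishes on $\dd_\bot T^+$, and is constant on $\dd_\top T^+$, so it equals $c(z_1,z_2)\hm_\top(z_3)$; evaluating at a fixed $z_3$ with $\hm_\top(z_3)\neq 0$ shows $c$ is harmonic in $z_1,z_2$, vanishes when $z_1$ or $z_2\in\dd_\bot T^+$, and is constant in $z_2$ on $\dd_\top T^+$, whence $c(z_1,z_2)=c_1(z_1)\hm_\top(z_2)$; one more iteration gives $c_1\equiv\mathrm{const}$, and restricting all three variables to $\dd_\top T^+$ identifies this constant as $M_3$, which is~\eqref{h3}.

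I expect the main obstacle to be the residue cancellation in Step~1 for $n=3$: it is exactly this algebraic identity between the leading singular parts of $f^{[++]}(z_1,z_2)$ and $f^{[++]}(z_2,z_1)$ that removes the would‑be Green's‑function contribution to $h_3$ and makes it a pure product of harmonic measures (for $n=2$, by contrast, the pole survives and produces $\pi^{-1}G$). A secondary technical point, which I would handle separately, is justifying the boundary behaviour---integrability of $\AA_n$ near $\dd_\top T^+$ and the passage to the limit up to the boundary in Step~2---which rests on the Beurling‑type estimate of Lemma~\ref{lemma:K-1-Beurling-estimate} (transported to $T^+$ via $\phi$) together with the combinatorial constancy of the height on the top face.
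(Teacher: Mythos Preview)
Your proof is correct and follows essentially the same approach as the paper: harmonicity in each variable, identification of the diagonal singularity of $\AA_2$ (and cancellation of that of $\AA_3$), and a uniqueness argument for harmonic functions with the given boundary behaviour. The organizational differences are minor: the paper subtracts $M_n\hm_\top(z_1)\cdots\hm_\top(z_n)$ at the outset and argues that the resulting function has zero Dirichlet values on $(\partial T^+)^n$, then iterates the maximum principle, whereas you use the combinatorial constancy of $\hbar_{\Omega_\delta}$ on the top face to deduce that $h_n$ is constant on $\dd_\top T^+$ in each variable separately and peel off the $\hm_\top$ factors one at a time (your route makes the induction slightly more explicit). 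One small point the paper makes explicit and you leave implicit: to conclude that $\AA_2$ is $O(1)$ after subtracting the double pole (i.e., that there is no residual simple pole), one uses that the regular part $r(z_1,z_2)$ of $f^{[++]}$ is jointly holomorphic (Hartogs), so $r(z_1,z_2)-r(z_2,z_1)=O(|z_2-z_1|)$ and the cross terms in $f^{[++]}(z_1,z_2)f^{[++]}(z_2,z_1)$ are bounded.
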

\begin{proof} 
The proof is similar to the proof of~\cite[Theorem~1.4]{clrI} for~$n=2$ and~$n=3$; we briefly recall it below for completeness. Note that $h_n(z_1,\ldots,z_n)-M_n\hm_\top(z_1)\cdots\hm_\top(z_n)$ is a real-valued harmonic function in each of the variables with possible singularities at $z_j=z_k$. Moreover, this function has zero Dirichlet boundary values if all $z_k\in\partial T^+$ (either boundary component): if at least one~$z_k\in\dd_\bot T^+$, then both terms equal zero, and if all~$z_k\in\dd_\top T^+$, then both terms equal~$M_n$. 

For $n=2$, it follows from the asymptotics~\eqref{eq:fpmpm-sing} that
\[
\AA_2(z_1,z_2) = -\frac{1}{2\pi^2}\Re\left(\frac{dz_1\,dz_2}{(z_2-z_1)^2}\right)+O(1)\ \ \text{as}\ \ |z_2-z_1|\to 0.
\]
(This relies on Hartogs's theorem, which implies that~$f^{[++]}(z_1,z_2)-f^{[++]}(z_2,z_1)=O(|z_2-z_1|)$ since this function is holomorphic everywhere in both variables.) A direct (double) integration of $\AA_2(z_1,z_2)$ yields the asymptotics 
\[
h_2(z_1,z_2)=-(2\pi^2)^{-1}\log|z_2-z_1|+O(1)\ \ \text{as}\ \ |z_2-z_1|\to 0.
\]
This behavior matches the logarithmic singularity of the harmonic function~$\pi^{-1}G(z_1,z_2)$, which also has zero Dirichlet boundary conditions. 
Therefore, identity~\eqref{h2} holds. 

For $n=3$, it follows from~\eqref{eq:fpmpm-sing} that the differential form~$\AA_3$ has \emph{no} singularities at all. To see why, observe that as $z_3\to z_2\neq z_1$, for each $s_1\in\{\pm\}$ we have
    $$f^{[s_1,+]}(z_1,z_2)f^{[++]}(z_2,z_3)f^{[+,s_1]}(z_3,z_1)=\frac{2}{\pi i}\cdot\frac{f^{[s_1,+]}(z_1,z_2)f^{[+,s_1]}(z_2,z_1)}{z_3-z_2}+O(1),$$
    $$f^{[s_1,+]}(z_1,z_3)f^{[++]}(z_3,z_2)f^{[+,s_1]}(z_2,z_1)=\frac{2}{\pi i}\cdot\frac{f^{[s_1,+]}(z_1,z_2)f^{[+,s_1]}(z_2,z_1)}{z_2-z_3}+O(1).$$
    Both quantities have a simple pole at $z_3=z_2$ of the same residue but with opposite sign.  Their sum hence has no singularity as $z_3\to z_2$ or as $z_3\to z_1$.  Similar computations work for all other combinations of $s_1,s_2,s_3\in\{\pm\}$. Hence,~$h_3$ has no singularities, which implies~\eqref{h3}.
\end{proof}

In principle, one can generalize Proposition~\ref{prop:h2h3} to $n\ge 4$. However, a direct analysis of singularities of the differential forms~$\AA_n$ becomes more complicated. Instead, we start with the following structural statement about the differential forms~$\AA_n$. For $s_1,s_2,s_3\in\{\pm\}$ and $z_1,z_2,z_3\in T^+$, let
\begin{align}
    F_2^{[s_1,s_2]}(z_1,z_2) & := f^{[s_1,s_2]}(z_1,z_2)f^{[s_2,s_1]}(z_2,z_1), \label{eq:F2-def}\\
    F_3^{[s_1,s_2,s_3]}(z_1,z_2,z_3) & := f^{[s_1,s_2]}(z_1,z_2)f^{[s_2,s_3]}(z_2,z_3)f^{[s_3,s_1]}(z_3,z_1). \label{eq:F3-def}
\end{align}

\begin{lmma}\label{lemma:F2F3toAn}
The functions~\eqref{eq:F2-def} and~\eqref{eq:F3-def} completely determine all differential forms~$\AA_n$, $n\ge 2$.
\end{lmma}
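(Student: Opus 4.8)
The plan is to show that every differential form $\AA_n$ can be written as a polynomial (with numerical coefficients) in the functions $F_2^{[\pm\pm]}$ and $F_3^{[\pm\pm\pm]}$, times the monomial $\prod_k dz_k^{[s_k]}$, summed over the sign choices. The point is that, by definition \eqref{An}, $\AA_n$ involves the determinant $\det[\1_{j\ne k}f^{[s_j,s_k]}(z_j,z_k)]_{j,k=1}^n$, and expanding this determinant gives a sum over permutations $\sigma\in S_n$ \emph{without fixed points} of the signed products $\prod_{j=1}^n f^{[s_j,s_{\sigma(j)}]}(z_j,z_{\sigma(j)})$. Such a product factorizes over the cycles of $\sigma$, so everything reduces to understanding the \emph{cycle contributions}
\[
C_\ell^{[s_{i_1},\dots,s_{i_\ell}]}(z_{i_1},\dots,z_{i_\ell}):=f^{[s_{i_1},s_{i_2}]}(z_{i_1},z_{i_2})\,f^{[s_{i_2},s_{i_3}]}(z_{i_2},z_{i_3})\cdots f^{[s_{i_\ell},s_{i_1}]}(z_{i_\ell},z_{i_1})
\]
for a cycle of length $\ell\ge 2$. (A derangement has all cycles of length $\ge 2$, which is why only $F_2$ and $F_3$-type objects and their longer analogues enter — but the claim is that the longer ones are themselves determined.) So the heart of the matter is: express $C_\ell$ for $\ell\ge 4$ in terms of $C_2=F_2$ and $C_3=F_3$.

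The key algebraic identity I would use is the \emph{cocycle/multiplicativity} relation. Since the $f^{[s_i,s_j]}$ are factors of a single two-point object, one has, for any three indices and any intermediate sign, the ``chain'' identity
\[
C_\ell^{[\dots]}(z_{i_1},\dots,z_{i_\ell})\;=\;\frac{F_3^{[s_{i_1},s_{i_2},s_{i_3}]}(z_{i_1},z_{i_2},z_{i_3})\cdot C_{\ell-1}^{[s_{i_1},s_{i_3},s_{i_4},\dots,s_{i_\ell}]}(z_{i_1},z_{i_3},z_{i_4},\dots,z_{i_\ell})}{F_2^{[s_{i_1},s_{i_3}]}(z_{i_1},z_{i_3})}\,,
\]
which one checks simply by writing out both sides as products of $f$'s and cancelling: the numerator has $f^{[s_{i_1},s_{i_2}]}f^{[s_{i_2},s_{i_3}]}f^{[s_{i_3},s_{i_1}]}$ times $f^{[s_{i_1},s_{i_3}]}f^{[s_{i_3},s_{i_4}]}\cdots f^{[s_{i_\ell},s_{i_1}]}$, and the denominator $f^{[s_{i_1},s_{i_3}]}f^{[s_{i_3},s_{i_1}]}$ removes exactly the spurious factors, leaving $C_\ell$. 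Iterating this identity lets me ``triangulate'' any $\ell$-cycle into $\ell-2$ triangles glued along diagonals, so that
\[
C_\ell^{[\dots]}=\frac{\prod_{t=2}^{\ell-1}F_3^{[s_{i_1},s_{i_t},s_{i_{t+1}}]}(z_{i_1},z_{i_t},z_{i_{t+1}})}{\prod_{t=2}^{\ell-2}F_2^{[s_{i_1},s_{i_{t+1}}]}(z_{i_1},z_{i_{t+1}})}\,,
\]
which is the desired expression of $C_\ell$ purely in terms of $F_2$ and $F_3$. Plugging this back into the permutation expansion of the determinant in \eqref{An} then writes every coefficient of $\prod_k dz_k^{[s_k]}$ in $\AA_n$ as a rational expression in $F_2$'s and $F_3$'s, proving the lemma. (One should note the cycle contributions $C_\ell$ for a cycle and for its inverse differ only by swapping the two arguments of each $F_2,F_3$ and the order of sign labels, so both orientations are covered; and that a $2$-cycle contributes $C_2=F_2$ directly, with no triangulation needed.)

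The main obstacle — really the only subtlety — is making sure the triangulation formula is well-defined, i.e.\ that dividing by $F_2^{[s_1,s_3]}(z_1,z_3)$ is legitimate. A priori $F_2^{[s_1,s_2]}(z_1,z_2)=f^{[s_1,s_2]}(z_1,z_2)f^{[s_2,s_1]}(z_2,z_1)$ could vanish on a codimension-one set, so the identity for $C_\ell$ is a priori an equality of meromorphic functions with the diagonals $z_j=z_k$ and these extra loci removed. The clean way to handle this is to establish the identity first as a formal/pointwise relation wherever all denominators are nonzero — which holds on a dense open set since the $f^{[\pm\pm]}$ are not identically zero (Lemma~\ref{uniqueness} plus \eqref{eq:fpmpm-sing}) — and then observe that both sides of the final formula for $\AA_n$, namely the determinant in \eqref{An} and its expansion in terms of $F_2$ and $F_3$, extend continuously (indeed, the determinant is manifestly defined wherever the $f^{[\pm\pm]}$ are, i.e.\ off the diagonals) so that the identity propagates by density. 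Thus the statement ``$F_2$ and $F_3$ completely determine all $\AA_n$'' is understood in the natural sense: the data of the functions $F_2^{[\pm\pm]}$ and $F_3^{[\pm\pm\pm]}$ pins down each $\AA_n$ via the explicit combinatorial formula above. I would present the triangulation identity, verify it by direct cancellation, and record the resulting closed form for the generic coefficient of $\AA_n$; the density argument is a one-line remark.
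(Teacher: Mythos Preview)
Your proof is correct and follows essentially the same approach as the paper: expand the determinant over derangements, factor over cycles, and express each cycle contribution via the triangulation identity $C_\ell=\prod F_3/\prod F_2$ (the paper writes exactly this formula for the $n$-cycle $(12\cdots n)$). You add a careful remark about the zero locus of $F_2$ and the density argument, which the paper simply omits.
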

\begin{proof}
Note that each differential form $\AA_n$ is a sum of terms of the form
    \[
        f^{[s_{\sigma(1)},s_{\sigma(2)}]}(z_{\sigma(1)},z_{\sigma(2)})f^{[s_{\sigma(2)},s_{\sigma(3)}]}(z_{\sigma(2)},z_{\sigma(3)})\cdots f^{[s_{\sigma(n)},s_{\sigma(1)}]}(z_{\sigma(n)},z_{\sigma(1)}),
    \]
    where $\sigma\in D_n$ is a \textit{derangement} of $n$ elements; that is, $D_n:=\{\sigma\in S_n:\forall k,\,\sigma(k)\neq k\}$.  These expressions can be written in terms of $F_2^{[s_1,s_2]}$ and $F_3^{[s_1,s_2,s_3]}$ only.  For example, if $\sigma\in D_n$ is the $n$-cycle $\sigma=(12\cdots n)$, then
    \[
        f^{[s_1,s_2]}(z_1,z_2)f^{[s_2,s_3]}(z_2,z_3)\cdots f^{[s_n,s_1]}(z_n,z_1) = \frac{\prod_{j=2}^{n-1}F_3^{[s_1,s_j,s_{j+1}]}(z_1,z_j,z_{j+1})}{\prod_{j=3}^{n-1}F_2^{[s_1,s_j]}(z_1,z_j)}.
    \]
    This gives a desired formula for $\AA_n$ in terms of~$F_2^{[s_1,s_2]}$ and~$F_3^{[s_1,s_2,s_3]}$.
\end{proof}

\begin{prop}\label{M2M3Mn}
  The values~$M_2$ and~$M_3$ completely determine differential forms~$\AA_n$ for all~$n\ge 2$. In particular, $M_2$ and~$M_3$ completely determine~$M_n$ for all $n\ge 4$.
\end{prop}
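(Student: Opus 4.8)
The plan is to combine Lemma~\ref{lemma:F2F3toAn} with the explicit formulas of Proposition~\ref{prop:h2h3}. Since $h_2$ and $h_3$ are expressed there in closed form in terms of $M_2$ and $M_3$, and since the forms $\AA_2,\AA_3$ are recovered back from $h_2,h_3$ by differentiating in each variable --- the coefficient of $dz_1^{[s_1]}\cdots dz_k^{[s_k]}$ in $\AA_k$ is $\partial^{[s_1]}_{z_1}\cdots\partial^{[s_k]}_{z_k}h_k$, with $\partial^{[+]}:=\partial_z$ and $\partial^{[-]}:=\partial_{\bar z}$ --- the differential forms $\AA_2$ and $\AA_3$ are determined by the pair $(M_2,M_3)$. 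By Lemma~\ref{lemma:F2F3toAn} it then suffices to show that the functions $F_2^{[s_1,s_2]}$ and $F_3^{[s_1,s_2,s_3]}$ are themselves recoverable from $\AA_2$ and $\AA_3$: once that is done, the lemma delivers $\AA_n$ for every $n\ge2$, the formula $h_n=\int_{z_1^0}^{z_1}\!\!\cdots\!\int_{z_n^0}^{z_n}\AA_n$ (with base points on $\dd_\bot T^+$, which fixes the integration constants) delivers all $h_n$, and restriction to $\dd_\top T^+$ delivers all $M_n$, in particular those with $n\ge4$.

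Recovering $F_2$ is immediate: expanding the $2\times2$ determinant in the definition~\eqref{An} of $\AA_2$ gives
\[
\AA_2(z_1,z_2)=-\tfrac1{16}\sum_{s_1,s_2\in\{\pm\}}F_2^{[s_1,s_2]}(z_1,z_2)\,dz_1^{[s_1]}dz_2^{[s_2]},
\]
so $F_2^{[s_1,s_2]}$ equals $-16$ times the $dz_1^{[s_1]}dz_2^{[s_2]}$-component of $\AA_2$; by~\eqref{h2} it is an explicit affine function of $M_2$.

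Recovering $F_3$ is the main point. Expanding the $3\times3$ determinant with zero diagonal shows that the $dz_1^{[s_1]}dz_2^{[s_2]}dz_3^{[s_3]}$-component of $\AA_3$ equals $\tfrac1{64}\bigl(F_3^{[s_1,s_2,s_3]}(z_1,z_2,z_3)+F_3^{[s_1,s_3,s_2]}(z_1,z_3,z_2)\bigr)$, i.e.\ $\AA_3$ records only the \emph{sum} of a ``forward'' and a ``backward'' product. On the other hand, regrouping the six factors in~\eqref{eq:F2-def}--\eqref{eq:F3-def} yields the companion identity
\[
F_3^{[s_1,s_2,s_3]}(z_1,z_2,z_3)\cdot F_3^{[s_1,s_3,s_2]}(z_1,z_3,z_2)=F_2^{[s_1,s_2]}(z_1,z_2)\,F_2^{[s_2,s_3]}(z_2,z_3)\,F_2^{[s_3,s_1]}(z_3,z_1),
\]
whose right-hand side is already known from $M_2$ by the previous step. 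Thus the \emph{unordered} pair $\{F_3^{[s_1,s_2,s_3]}(z_1,z_2,z_3),F_3^{[s_1,s_3,s_2]}(z_1,z_3,z_2)\}$ is determined by $(M_2,M_3)$ --- it consists of the two roots of a quadratic with known coefficients --- and what is left, the actual difficulty, is to decide which root is $F_3^{[s_1,s_2,s_3]}(z_1,z_2,z_3)$. Since only two sign values occur, at least one of the pairs $(s_1,s_2),(s_2,s_3),(s_3,s_1)$ consists of equal signs; say $s_2=s_3$. Then, as $z_3\to z_2$, the factor $f^{[s_2,s_3]}$ in $F_3^{[s_1,s_2,s_3]}$ develops a simple pole (in $z_3-z_2$ or in $\bar z_3-\bar z_2$, according to the sign) by~\eqref{eq:fpmpm-sing}, with ``residue'' a known nonzero constant times $F_2^{[s_1,s_2]}(z_1,z_2)$, whereas its backward partner $F_3^{[s_1,s_3,s_2]}(z_1,z_3,z_2)$ develops the \emph{opposite} singularity. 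This distinguishes the two roots near that diagonal; since the singular part is built from $F_2$ (hence from $M_2$) and does not vanish identically, it pins down a single well-defined continuous branch of $F_3^{[s_1,s_2,s_3]}$ over the connected configuration space of distinct ordered triples in $T^+$ with the coincidence locus of the two roots (a proper analytic subset) removed. Hence $F_3^{[s_1,s_2,s_3]}$ is determined by $(M_2,M_3)$, which concludes the proof via Lemma~\ref{lemma:F2F3toAn}.

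The single delicate step is the forward/backward disambiguation of $F_3$ described above; the remainder is routine bookkeeping with the determinant in~\eqref{An} and the closed forms~\eqref{h2}--\eqref{h3}.
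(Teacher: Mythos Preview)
Your proof is correct and follows essentially the same route as the paper: recover $F_2^{[\pm\pm]}$ as the coefficients of $\AA_2$, recover the sum and product of the forward/backward $F_3$'s from $\AA_3$ and $F_2$, disambiguate the two quadratic roots via the pole when two equally-signed points collide, and then invoke Lemma~\ref{lemma:F2F3toAn}. The only cosmetic difference is that the paper phrases the local-to-global step by saying ``$F_3^{[s_1,s_2,s_3]}$ is (anti)holomorphic in each variable, so it suffices to fix the choice on a small open set,'' which is a slightly cleaner way to justify your branch-selection argument (and makes it transparent that the coincidence locus is complex-analytic in the variables $z_k^{[s_k]}$, hence its complement is connected).
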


\begin{proof}
It is clear from equations~\eqref{h2} and~\eqref{h3} that~$M_2$ and~$M_3$ determine~$\AA_2$ and~$\AA_3$, which are the exterior derivatives (in each of the variables) of~$h_2$ and~$h_3$. In particular, the functions~\eqref{eq:F2-def} are uniquely determined as coefficients of the form~$\AA_2$. Let us prove that the functions~\eqref{eq:F3-def} are also completely determined by~$M_2$ and~$M_3$. The coefficients of the form~$\AA_3$ are the sums
\[
F_3^{[s_1,s_2,s_3]}(z_1,z_2,z_3)+F_3^{[s_3,s_2,s_1]}(z_3,z_2,z_1)\,,
\]
which are hence determined by~$M_3$. Note also that the products
\[
F_3^{[s_1,s_2,s_3]}(z_1,z_2,z_3)F_3^{[s_3,s_2,s_1]}(z_3,z_2,z_1)=F_2^{[s_1,s_2]}(z_1,z_2)F_2^{[s_2,s_3]}(z_2,z_3)F_2^{[s_3,s_1]}(z_3,z_1)
\]
are uniquely determined by~$M_2$. This allows us to express~$F_3^{[s_1,s_2,s_3]}(z_1,z_2,z_3)$ as a solution of a quadratic equation, the coefficients of which are uniquely determined by~$M_2$ and~$M_3$. To choose one of the two solutions, note that each such function is holomorphic or antiholomorphic in each of~$z_1,z_2,z_3$. Therefore, it is sufficient to fix this choice on a (small) open subset of $T^+\times T^+\times T^+$. To this end, let us assume that, say, $s_1=s_2=+$. (One can always find two equal signs among $s_1$, $s_2$, and $s_3$, so all other cases can be considered similarly.) It follows from ~\eqref{eq:fpmpm-sing} that
\[
F_3^{[+,+,s_3]}(z_1,z_2,z_3)=\frac{2}{\pi i}\cdot\frac{F_2^{[+,s_3]}(z_0,z_3)}{z_2-z_1}+O(1)\ \ \text{as}\ \ z_1,z_2\to z_0\ne z_3,
\]
and similarly but with the opposite sign for $F_3^{[s_3,+,+]}(z_3,z_2,z_1)$. These asymptotics allow us to identify $F_3^{[+,+,s_3]}$ with one of the two solutions of the aforementioned quadratic equation.

Using Lemma~\ref{lemma:F2F3toAn}, this gives a formula for $\AA_n$, and hence for its primitive~$h_n$ and also for $M_n=h_n|_{\dd_\top T^+\times\dots\times \dd_\top T^+}$, in terms of $M_2$ and~$M_3$. The proof is complete.
\end{proof}

\subsection{Cubic equation for the second and third moments} \label{subsec:cubicequation}
Unlike~\cite[Lemma~7.3]{clrI} in the simply connected setup, Proposition~\ref{M2M3Mn} leaves two real degrees of freedom ($M_2$ and $M_3$) for the possible sequences~$(M_n)_{n\ge 2}$. In this section we prove that all possible pairs~$(M_2,M_3)$ that can arise in equations~\eqref{h2} and ~\eqref{h3} satisfy a cubic equation, i.e., define a real point on an elliptic curve parameterized by the torus~$T$; see Fig.~\ref{fig:elliptic-curve}.

For~$s_1,s_2\in\{\pm\}$, denote 
\[
G^{[s_1,s_2]}(z_1,z_2)\,:=\,\dd^{[s_1]}_{z_1}\dd^{[s_2]}_{z_2}G(z_1,z_2),\qquad z_1,z_2\in T^+,
\]
where we use the notation~$\dd^{[+]}_z:=\dd_z$ and~$\dd^{[-]}_z:=\dd_{\overline{z}}$ for the Wirtinger derivatives. Recall that we define 
\[
\Lambda:=\{m\omega_1+n\omega_2\in\C:m,n\in\Z\},\quad \text{where}\quad \omega_1:=\cm\ \ \text{and}\ \ \omega_2:=2\pi i,
\]
and let $\wp(z)=\wp(z;\Lambda)$ be the Weierstrass $\wp$-function for this lattice.
\begin{lmma}
\label{lemma:p-function-G}
(i) There exists a constant $c_\cm\in\R$ such that 
    \begin{equation}\label{p-function-0}
         -4\pi G^{[s_1,s_2]}(z_1,z_2)=s_1s_2(\wp(z_2^{[s_2]}-z_1^{[s_1]})+c_\cm)
    \end{equation}
    for all $s_1,s_2\in\{\pm\}$ and $z_1,z_2\in T^+$, where we use the notation~$z^{[+]}:=z$ and~$z^{[-]}:=\overline{z}$. 
    
\noindent (ii) The following inequalities hold:~$\wp(\tfrac12{\omega_2})+c_\cm<\wp(\tfrac12(\omega_1+\omega_2))+c_\cm<0<\wp(\tfrac12{\omega_1})+c_\cm$.
\end{lmma}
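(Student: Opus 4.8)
\textbf{Plan for the proof of Lemma~\ref{lemma:p-function-G}.}

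\emph{Part (i).} The plan is to reconstruct the Green's function $G$ on $T^+$ by the Schottky double construction and then differentiate. Fix $z_1\in T^+$ and consider the function $z\mapsto G(z_1,z)$, harmonic on $T^+\smallsetminus\{z_1\}$ with a logarithmic pole at $z_1$ and zero Dirichlet data on both boundary circles $\Im z=0$ and $\Im z=\pi$. The Dirichlet condition on $\Im z=0$ lets us extend $G(z_1,\cdot)$ across that circle by odd reflection, producing a function on $\R/\cm\Z\times(-\pi,\pi)$ with a negative log-pole at $z_1$ and an equal-strength positive log-pole at $\overline{z_1}$; the Dirichlet condition on $\Im z=\pi$ is preserved (it becomes the condition $\Im z=-\pi$ as well by $2\pi i$-periodicity of the extended picture, matching the reflection across $\Im z=\pi$). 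Iterating the two reflections tiles the whole plane, and the result is a $\Lambda$-periodic harmonic function with simple log-poles of residue $-\tfrac{1}{2\pi}$ at the lattice translates of $z_1$ and $+\tfrac{1}{2\pi}$ at the lattice translates of $\overline{z_1}$. Consequently
\[
-2\pi G(z_1,z)\;=\;\log\left|\frac{\theta(z-z_1)}{\theta(z-\overline{z_1})}\right|\;+\;(\text{a real-linear term in }\Im z\text{ and }\Im z_1),
\]
for the appropriate theta function attached to $\Lambda$; the additive real-linear correction is forced by the requirement that both sides be genuinely $\Lambda$-periodic (the theta quotient alone is only quasi-periodic, and the linear term is fixed by $\Lambda$ and is an even function of $\Im z$, $\Im z_1$). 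Now apply $\dd^{[s_1]}_{z_1}\dd^{[s_2]}_{z_2}$. Writing $2\pi G=-\Re$ of a holomorphic-in-$z$, holomorphic-in-$z_1$ primitive away from the reflected sector, the holomorphic $\dd_{z_1}\dd_{z_2}$ of $\log\theta(z_2-z_1)$ is $-\wp(z_2-z_1)$ up to an additive constant (the classical identity $(\log\theta)'' = -\wp + \text{const}$), the term $\log\theta(z-\overline{z_1})$ is killed by $\dd_{z_1}$ when $s_1=+$, the linear term is killed by either Wirtinger derivative, and the factors $s_1,s_2$ appear because $\dd^{[-]}$ conjugates. Tracking the four sign choices $(s_1,s_2)$ and using $\wp$ even gives exactly \eqref{p-function-0}, with $c_\cm$ the universal additive constant collecting the $(\log\theta)''$ normalization and the linear-term contribution (real because $G$ is real-symmetric). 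This is essentially a bookkeeping computation once the reflection picture is set up; the one point requiring care is checking that the cross-terms (e.g. $\dd_{z_1}$ hitting $\log\theta(z-\overline{z_1})$ when $s_1=-$) reassemble into $\wp$ of the argument $z_2^{[s_2]}-z_1^{[s_1]}$ rather than of some other combination, which follows because reflection sends $z_1\mapsto\overline{z_1}$ consistently.

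\emph{Part (ii).} The plan is to read off the signs from the correspondence between $-4\pi G^{[s_1,s_2]}$ and second derivatives of $G$ along the three real two-torsion points, combined with positivity of $G$. Take $s_1=s_2=+$ and let $z_2\to z_1$ along the real direction $z_2=z_1+t$, $t\in\R$ small: then $\wp(z_2-z_1)+c_\cm\to +\infty$, consistent with $-4\pi G^{[++]}\to+\infty$ from the log-singularity, so this limit is not one of the three we need. Instead evaluate \eqref{p-function-0} at the half-periods, which correspond to specific geometric configurations of $z_1,z_2$: $z_2^{[+]}-z_1^{[+]}=\tfrac12\omega_1$ is a purely horizontal separation by half the circumference, $z_2^{[+]}-z_1^{[+]}=\tfrac12\omega_2$ would be a vertical separation by $\pi i$ (reaching the opposite sheet), and $\tfrac12(\omega_1+\omega_2)$ is the diagonal combination. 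The cleanest route is to instead argue directly on $T$: the three real half-periods $\tfrac12\omega_1,\tfrac12\omega_2,\tfrac12(\omega_1+\omega_2)$ are exactly the three points where $\wp$ takes its real critical values $e_1>e_3>e_2$ (with the standard ordering for a rectangular lattice $\Lambda=\cm\Z+2\pi i\Z$, where $\wp$ is real on the real axis and on $\Im z=\pi$, real-valued and monotone between consecutive half-periods). For a rectangular lattice it is classical that $\wp(\tfrac12\omega_1)=e_1$ is the largest, $\wp(\tfrac12\omega_2)=e_2$ the smallest, and $\wp(\tfrac12(\omega_1+\omega_2))=e_3$ is in between; since $e_1+e_2+e_3=0$ we get $e_2<e_3<0<e_1$. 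It remains to fix the constant $c_\cm$ so that the strict inequalities in the statement hold, i.e. to show $e_2+c_\cm<e_3+c_\cm<0<e_1+c_\cm$, equivalently $e_3<-c_\cm<e_1$ together with $e_2+c_\cm<e_3+c_\cm$ (automatic). The condition $-c_\cm<e_1$ is equivalent, via \eqref{p-function-0}, to $G^{[++]}(z_1,z_1+\tfrac12\omega_1)<0$, i.e. $\dd_{z_1}\dd_{z_2}G<0$ at a horizontal half-period separation; the condition $e_3<-c_\cm$ is equivalent to $G^{[++]}>0$ at the diagonal half-period, i.e. at $z_2=\overline{z_1}$-type antipodal placement.

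\emph{Where the work concentrates.} Part (i) is the substantive step: setting up the Schottky-double/reflection representation of $G$ carefully enough to be certain of the additive constant's reality and of the precise argument $z_2^{[s_2]}-z_1^{[s_1]}$ in each of the four sign sectors. Part (ii) then reduces to (a) the textbook fact that for a rectangular lattice the half-period values of $\wp$ are real with the stated ordering and sum to zero, and (b) pinning down the sign of $c_\cm$ — which I would do by evaluating \eqref{p-function-0} in a configuration where $G^{[s_1,s_2]}$ is manifestly signed, for instance using that $\dd_{z_1}\dd_{z_2}G(z_1,z_2) = \dd_{z_1}\dd_{z_2}\big(-\tfrac{1}{2\pi}\log|z_2-z_1| + (\text{harmonic})\big)$ has a definite sign near the diagonal versus at maximal separation, or alternatively by comparing with the known $\cm\to\infty$ and $\cm\to 0$ degenerations of the cylinder where $G$ and hence $c_\cm$ can be computed explicitly and the inequalities checked, then invoking continuity/monotonicity in $\cm$. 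The likely main obstacle is not any single hard estimate but ensuring the reflection bookkeeping in Part (i) produces the constant with the correct reality and that the half-period evaluations in Part (ii) are matched to the right geometric configuration without an off-by-a-half-period error; I would double-check Part (ii) against the two limiting regimes of $\cm$ as a sanity test.
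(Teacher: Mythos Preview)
Your Part~(i) plan is correct but more laborious than necessary. The paper bypasses the explicit theta-function representation of~$G$ entirely: it observes that since $G(z_1,\cdot)=0$ on $\partial T^+$, one has $G^{[+-]}(z_1,z_2)=-G^{[++]}(z_1,z_2)$ for $z_2\in\partial T^+$, which lets one glue $-G^{[++]}$ on $T^+$ with $\overline{G^{[+-]}}$ on $T^-$ directly into a meromorphic function on the torus~$T$. This glued function has a single double pole at $z_1$ with principal part $(z_2-z_1)^{-2}$, hence equals $\wp(z_2-z_1)+c_\cm(z_1)$; symmetry in $z_1,z_2$ forces $c_\cm$ constant, and reality follows from evaluating with both arguments on the real line. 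Your route via reflecting $G$ itself, writing it through $\log|\theta|$ plus a linear correction, and then differentiating gets to the same place, but with more bookkeeping (the linear term, the quasi-periodicity correction) that the paper's argument never has to touch.

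For Part~(ii) your plan has a gap and an error. The error: from $e_1+e_2+e_3=0$ and $e_2<e_3<e_1$ you \emph{cannot} conclude $e_3<0$ (take $e_1=2$, $e_3=1$, $e_2=-3$); in any case the statement concerns $e_j+c_\cm$, not the bare~$e_j$. The gap is that you never actually execute the ``evaluate where the sign is manifest'' idea, deferring instead to degenerations in~$\cm$. The paper's trick is concrete and short: take $z_1=0\in\partial_\bot T^+$. On the boundary the tangential derivative of $G$ vanishes, so $G^{[++]}(0,z_2)=-\tfrac14\,\partial_{y_1}\partial_{y_2}G(0,z_2)$. Now $\partial_{y_2}G(z_1,z_2)$ at $z_2\in\partial T^+$ is (up to sign) the Poisson kernel, which is positive for $z_1\in T^+$ and vanishes for $z_1\in\partial T^+$; hence its normal derivative $\partial_{y_1}$ at $z_1=0$ is positive when $z_2\in\partial_\bot T^+$ and negative when $z_2\in\partial_\top T^+$. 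Evaluating at $z_2=\tfrac12\omega_1$, $\tfrac12\omega_2$, $\tfrac12(\omega_1+\omega_2)$ immediately gives the three sign claims, with the remaining strict inequality $\wp(\tfrac12\omega_2)<\wp(\tfrac12(\omega_1+\omega_2))$ coming from standard monotonicity of $\wp$ on the segment $[\tfrac12\omega_2,\tfrac12(\omega_1+\omega_2)]$.
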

\begin{rema} 
Integrating~\eqref{p-function-0} over vertical segments, one can show that $c_\cm=\frac{2}{\omega_2}\zeta(\tfrac12\omega_2;\Lambda)$, where $\zeta(z;\Lambda)$ is the Weierstrass $\zeta$-function. We do not need this exact value of~$c_\cm$ in what follows.
\end{rema}

\begin{proof}
(i) By symmetry with respect to complex conjugation, it is sufficient to prove the identity~\eqref{p-function-0} for~$s_1=+$. 
For each~$z_1\in T^+$ we have $G(z_1,\,\cdot\,)=0$ on both horizontal boundary components of the cylinder $T^+$. Therefore, $\dd_{z_1}\dd_{x_2}G(z_1,\,\cdot\,)=0$ and hence $G^{[+-]}(z_1,z_2)=-G^{[++]}(z_1,z_2)$ if $z_2\in \dd T^+$. By Schwarz reflection, this means that 
the function
\[
z_2\mapsto 4\pi \cdot \begin{cases} -G^{[++]}(z_1,z_2) & \text{if $z_2\in\overline{T^+}$}\\
\overline{G^{[+-]}(z_1,z_2)} & \text{if $z_2\in\overline{T^-}$} \end{cases}
\]
is meromorphic on the torus~$T$. The only singularity of this function on~$T$ is the double pole at~$z_1$ with the leading term~$(z_2-z_1)^{-2}$ as~$z_2\to z_1$. As the Weierstrass $\wp$-function is defined by this property uniquely up to an additive constant, this proves that the identity~\eqref{p-function-0} holds with some constant $c_\cm(z_1)\in\C$ that might depend on~$z_1$. However, due to the symmetry of~\eqref{p-function-0} with respect to~$z_1$ and~$z_2$, this constant actually must be the same for all~$z_1\in T^+$. Finally, if~$z_1,z_2\in\R$, then  $G^{[++]}(z_1,z_2)=-\frac14\dd_{y_1}\dd_{y_2}G(z_1,z_2)\in\R$, which implies that~$c_\cm\in\R$.

\noindent (ii) We have $G^{[++]}(0,\tfrac12{\omega_1})=-\frac14\dd_{y_1}\dd_{y_2} G(0,\tfrac12{\omega_1})<0$ since the Green's function~$G$ is positive in~$T^+$. A similar consideration for the points~$z_1=0\in\dd_\bot T^+$ and for~$z_2=\frac12{\omega_2}\in\dd_\top T^+$ or~$z_2=\frac12(\omega_1+\omega_2)$ implies that~$G^{[++]}(0,\frac12{\omega_2})>0$ and~$G^{[++]}(0,\frac12(\omega_1+\omega_2))>0$. 
The fact that $\wp(\frac12(\omega_1+\omega_2))>\wp(\frac12{\omega_2})$ follows from the well-known monotonicity properties of the Weierstrass $\wp$-function and also can be derived from the inequality $G(iy_1,\frac12\omega_1+i(\frac12\omega_2-y_2))>G(iy_1,i(\frac12\omega_2-y_2))$ as $y_1,y_2\downarrow 0$.
\end{proof}

\begin{thrm}
\label{thrm:cubicequation}
    If~$M_2$ and~$M_3$ are obtained from the functions~$f^{[\pm \pm]}$ as in Section~\ref{subsec:An,hn,Mn}, then
    \begin{equation}\label{cubicequation}
        M_3^2\ =\ -4\big(M_2+\wp(\tfrac12{\omega_1})+c_\cm\big) \big(M_2+\wp(\tfrac12{\omega_2})+c_\cm\big) \big(M_2+\wp(\tfrac12(\omega_1\!+\!\omega_2))+c_\cm\big),
    \end{equation}
    where $\omega_1=\cm$ and $\omega_2=2\pi i$ as above, and $c_\cm$ is the constant from Lemma~\ref{lemma:p-function-G}.
\end{thrm}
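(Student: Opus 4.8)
The plan is to read off closed forms for $\AA_2$ and $\AA_3$ from Proposition~\ref{prop:h2h3} and then exploit the (anti)holomorphicity of the $f^{[\pm\pm]}$ in each variable, which converts a quadratic relation for $F_3$ into a ``perfect square'' constraint that can only hold when~\eqref{cubicequation} holds.

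First I would compute $F_2$ explicitly. Since $\AA_2=d_{z_1}d_{z_2}h_2$ and the coefficient of $dz_1^{[s_1]}dz_2^{[s_2]}$ in $\AA_2$ is $-\tfrac1{16}F_2^{[s_1,s_2]}(z_1,z_2)$, formula~\eqref{h2}, Lemma~\ref{lemma:p-function-G}(i), and the elementary identity $\dd^{[s]}_z\hm_\top(z)=s/(2\pi i)$ give
\[
F_2^{[s_1,s_2]}(z_1,z_2)\;=\;\frac{4s_1s_2}{\pi^2}\bigl(\wp(z_2^{[s_2]}-z_1^{[s_1]})+c_\cm+M_2\bigr),\qquad s_1,s_2\in\{\pm\}.
\]
Next I would extract a quadratic for $F_3^{[s_1,s_2,s_3]}$. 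Comparing the coefficient $\tfrac1{64}\bigl(F_3^{[s_1,s_2,s_3]}(z_1,z_2,z_3)+F_3^{[s_3,s_2,s_1]}(z_3,z_2,z_1)\bigr)$ of $dz_1^{[s_1]}dz_2^{[s_2]}dz_3^{[s_3]}$ in $\AA_3$ with the same coefficient of $d_{z_1}d_{z_2}d_{z_3}h_3$ computed from~\eqref{h3}, the sum $S:=F_3^{[s_1,s_2,s_3]}(z_1,z_2,z_3)+F_3^{[s_3,s_2,s_1]}(z_3,z_2,z_1)=\tfrac{8i}{\pi^3}M_3\,s_1s_2s_3$ turns out to be a \emph{constant}. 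On the other hand, by~\eqref{eq:F2-def}--\eqref{eq:F3-def} the product $F_3^{[s_1,s_2,s_3]}(z_1,z_2,z_3)\cdot F_3^{[s_3,s_2,s_1]}(z_3,z_2,z_1)$ equals $F_2^{[s_1,s_2]}(z_1,z_2)F_2^{[s_2,s_3]}(z_2,z_3)F_2^{[s_3,s_1]}(z_3,z_1)$, which by the formula above is $\tfrac{64}{\pi^6}\tprod_{k=1}^3(\wp(u_k)+C)$ with $C:=c_\cm+M_2$ and $u_1:=z_2^{[s_2]}-z_1^{[s_1]}$, $u_2:=z_3^{[s_3]}-z_2^{[s_2]}$, $u_3:=z_1^{[s_1]}-z_3^{[s_3]}$ (so $u_1+u_2+u_3=0$). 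Thus $F_3^{[s_1,s_2,s_3]}$ is a root of $t^2-St+\tfrac{64}{\pi^6}\tprod_k(\wp(u_k)+C)$.

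The key step is then: since $F_3^{[s_1,s_2,s_3]}$ is holomorphic or antiholomorphic in each argument, so is $2F_3^{[s_1,s_2,s_3]}-S$, and hence the discriminant
\[
\Bigl(\tfrac{\pi^3}{8i}\bigl(2F_3^{[s_1,s_2,s_3]}-S\bigr)\Bigr)^{2}\;=\;M_3^{2}+4\tprod_{k=1}^{3}\bigl(\wp(u_k)+C\bigr)
\]
must be the square of a function that is (anti)holomorphic in each variable. Using $u_1+u_2+u_3=0$, the three points $(\wp(u_k),\wp'(u_k))$ are collinear on the cubic $y^2=4\tprod_i(x-e_i)$ with $e_i=\wp(\tfrac12\omega_i)$, so evaluating $4\tprod_k(x-\wp(u_k))=4x^3-g_2x-g_3-(\lambda x+\mu)^2$ at $x=-C$ (and using $e_1+e_2+e_3=0$) yields $4\tprod_k(\wp(u_k)+C)=(\mu-\lambda C)^2+4\tprod_i(e_i+C)$. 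Hence the displayed discriminant equals $(\mu-\lambda C)^2+\kappa$ with $\kappa:=M_3^{2}+4\tprod_i(M_2+\wp(\tfrac12\omega_i)+c_\cm)$ \emph{independent of the configuration}, where $\mu-\lambda C$ is a nonconstant (anti)meromorphic function of each $z_j$. If $\kappa\neq0$ then $(\mu-\lambda C)^2+\kappa$ vanishes to odd order along each branch $\{\mu-\lambda C=\pm\sqrt{-\kappa}\}$ of its zero set, contradicting that it is a perfect square of a holomorphic function --- \emph{provided} this zero set meets $T^+\times T^+\times T^+$. I expect that to be the delicate point: one must produce an actual configuration $z_1,z_2,z_3\in T^+$ (with mixed conjugation signs $s_j$, which is what lets $u_1,u_2,u_3$ reach arbitrary points of $T$) at which the left-hand side vanishes; using $M_2\geq0$ and the inequalities $\wp(\tfrac12\omega_2)+c_\cm<\wp(\tfrac12(\omega_1+\omega_2))+c_\cm<0<\wp(\tfrac12\omega_1)+c_\cm$ from Lemma~\ref{lemma:p-function-G}(ii), one should be able to realise $\tprod_k(\wp(u_k)+C)=-M_3^2/4$ along a suitable real family and then perturb to make the resulting zero simple. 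Once $\kappa=0$ is established, unravelling the definition of $\kappa$ and using $\wp(\tfrac12\omega_3)=\wp(\tfrac12(\omega_1+\omega_2))$ is exactly~\eqref{cubicequation}.

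An appealing alternative for that last step, which I would try first, is to bypass the genericity argument by exhibiting \emph{the} special configuration at which $u_1,u_2,u_3$ are the three half-periods $\tfrac12\omega_1,\tfrac12\omega_2,-\tfrac12(\omega_1+\omega_2)$ (reachable inside $T^+$ with, e.g., $s_1=s_2=+$, $s_3=-$) and showing directly that there $F_3^{[s_1,s_2,s_3]}=F_3^{[s_3,s_2,s_1]}$. Indeed $F_3^{[s_1,s_2,s_3]}/F_3^{[s_3,s_2,s_1]}$ is a cyclic product of ratios $r^{[s,s']}(z,w):=f^{[s,s']}(z,w)/f^{[s',s]}(w,z)$; writing $f^{[++]}$ (hence each $f^{[s,s']}$) as a Weierstrass $\sigma$-quotient $\mathrm{const}\cdot\sigma(v-a)/\sigma(v)$ times a linear-in-$v$ exponential, with $\wp(a)=-C$, the exponentials telescope away (since $u_1+u_2+u_3=0$) and the $\sigma$-quotients collapse to $\prod_k e^{\mp\eta_{(k)}a}=1$ at half-periods because $\eta_1+\eta_2-\eta_3=0$. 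Then $F_3^2=\tfrac14S^2$ and~\eqref{cubicequation} is immediate from the product formula of the previous paragraph. The obstacle here is instead justifying the $\sigma$-quotient form of $f^{[\pm\pm]}$ (i.e.\ that in the relevant variable it has exactly one simple pole and one zero, with the monodromy fixing things up to the instanton parameter), which is precisely the ``model function'' input; depending on how self-contained one wants Section~\ref{subsec:cubicequation} to be, the parity argument of the previous paragraph may be the cleaner route.
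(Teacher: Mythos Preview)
Your opening matches the paper exactly: compute $F_2^{[s_1,s_2]}=\tfrac{4s_1s_2}{\pi^2}\bigl(\wp(z_2^{[s_2]}-z_1^{[s_1]})+c_\cm+M_2\bigr)$ from~\eqref{h2} and Lemma~\ref{lemma:p-function-G}, read off from~\eqref{h3} that $S:=F_3^{[s_1,s_2,s_3]}+F_3^{[s_3,s_2,s_1]}$ is the constant $\tfrac{8i}{\pi^3}s_1s_2s_3M_3$, and observe that $(F_3^{[s_1,s_2,s_3]}-F_3^{[s_3,s_2,s_1]})^2$ is on one hand the square of an explicit (anti)holomorphic function and on the other equals, up to a scalar, $M_3^2+4\prod_k(\wp(u_k)+C)$. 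The divergence is entirely in the endgame.

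Your addition-law rewriting of the discriminant as $(\mu-\lambda C)^2+\kappa$, with $\kappa=M_3^2+4\prod_i(e_i+C)$ a constant, is an elegant shortcut the paper does not take, but you leave the decisive step --- realising an odd-order zero inside $(T^+)^3$ --- as a hope rather than an argument. Your second route is circular: a $\sigma$-quotient form for $f^{[\pm\pm]}$ is exactly the content of the model functions in Section~\ref{subsec:modelfcts}, which are constructed \emph{after} Theorem~\ref{thrm:cubicequation} and rely on it (via~\eqref{eq:M2M3=M2M3(mu)}).

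The paper closes the argument differently and more concretely. It first uses that $M_3^2+4\prod_k(\wp(u_k)+C)$ depends only on differences of the $z_k$ to upgrade the even-order-zeros property from $T^+\cup T^-$ to the whole torus $T$. It then specialises to $z_2=-z_1$ with $z_1$ near $\tfrac14\omega_1$, so the condition becomes: $M_3^2$ is a critical value of the even order-four elliptic function $f(z_1,z):=-4(\wp(2z_1)+C)(\wp(z+z_1)+C)(\wp(z-z_1)+C)$ (here Lemma~\ref{lemma:p-function-G}(ii) and $M_2\ge 0$ ensure the constant factor is nonzero). The derivative $f'(z_1,\cdot)$ has order six; four of its zeros are the half-periods and the remaining pair $\pm z_0(z_1)$ moves continuously with $z_1$. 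The half-period critical values vary with $z_1$ while $M_3^2$ does not, so $M_3^2=f(z_1,z_0(z_1))$. At $z_1=\tfrac14\omega_1$ the function $f(\tfrac14\omega_1,\cdot)$ acquires the extra period $\tfrac12\omega_1$, forcing $z_0(\tfrac14\omega_1)=\tfrac14\omega_1+\tfrac12\omega_2$, and evaluating there gives~\eqref{cubicequation}. If you want to rescue your addition-law route, the translation-to-$T$ step is the missing ingredient; note, though, that even with it you would still have to argue that the square root $h=F_3-F_3'$, a priori defined only on $(T^+)^3$, extends without monodromy --- the paper's critical-point count sidesteps this issue entirely.
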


\begin{proof} 
Recall the functions~$F_2^{[s_1,s_2]}(z_1,z_2)$ and~$F_3^{[s_1,s_2,s_3]}(z_1,z_2,z_3)$ defined by~\eqref{eq:F2-def} and~\eqref{eq:F3-def}. Note that for each $s_1,s_2,s_3\in\{\pm\}$ the function
\[
\big(F_3^{[s_1,s_2,s_3]}(z_1,z_2,z_3)+F_3^{[s_3,s_2,s_1]}(z_3,z_2,z_1)\big)^2-4F_2^{[s_1,s_2]}(z_1,z_2)F_2^{[s_2,s_3]}(z_2,z_3)F_2^{[s_3,s_1]}(z_3,z_1)
\]
is a perfect square and hence for each fixed~$z_1,z_2\in T^+$ all its zeros~$z_3\in T^+$ must have even multiplicity.
It follows from~\eqref{h3} that
\[
\textstyle \AA_3(z_1,z_2,z_3)=M_3(-\tfrac{i}{2\pi})^3\sum_{s_1,s_2,s_3\in\{\pm\}}s_1s_2s_3\,dz_1^{[s_1]}\,dz_2^{[s_2]}\,dz_3^{[s_3]}\,.
\]
Comparing this with the definition~\eqref{An} of the forms~$\AA_3$, we see that
\[
\big(F_3^{[s_1,s_2,s_3]}(z_1,z_2,z_3)+F_3^{[s_3,s_2,s_1]}(z_3,z_2,z_1)\big)^2=-(\tfrac2\pi)^6M_3^2\,.
\]
Similarly, equation~\eqref{h2} and Lemma~\ref{lemma:p-function-G} imply that
\begin{align*}
F_2^{[s_1,s_2]}(z_1,z_2)\,&=\,-4^2\big(\pi^{-1}G^{[s_1,s_2]}(z_1,z_2)-\tfrac{1}{(2\pi)^2}s_1s_2M_2\big)\\
&=\,\tfrac{4}{\pi^2}s_1s_2\big(\wp(z_2^{[s_2]}-z_1^{[s_1]})+c_\cm+M_2\big)\,.
\end{align*}
Therefore, for each $z_1,z_2\in T^+\cup T^-$, the function
\[
z_3\,\mapsto\, M_3^2+4\big(\wp(z_2-z_1)+c_\cm+M_2\big)\big(\wp(z_3-z_2)+c_\cm+M_2\big)\big(\wp(z_1-z_3)+c_\cm+M_2\big)
\]
cannot have zeros of odd multiplicity in~$T^+\cup T^-$. This function does not change if we add a constant to all~$z_1,z_2,z_3$. Hence, it cannot have zeros of odd multiplicity on the whole torus~$T$.

Now let~$z_1$ be a point close to~$\frac14\omega_1$ and let $z_2=-z_1$. It follows from the preceding discussion that all zeros of the elliptic function~$f(z_1,\,\cdot\,)-M_3^2$, where
\[
f(z_1,z):=-4\big(\wp(2z_1)+c_\cm+M_2\big)\big(\wp(z+z_1)+c_\cm+M_2\big)\big(\wp(z-z_1)+c_\cm+M_2\big),
\]
must have even multiplicities. This is only possible if $M_3^2$ is the critical value of this function. (Note that the first factor $\wp(2z_1)+c_\cm+M_2$, which is constant in $z$, does not vanish provided that~$z_1$ is close enough to~$\tfrac14\omega_1$ as we know from Lemma~\ref{lemma:p-function-G} that $\wp(\tfrac12\omega_1)+c_\cm>0$ and $M_2\ge 0$.)

The elliptic function~$f'$ has order~$6$ since it has poles at~$\pm z_1$ each of order $3$. Therefore, $f'$ has exactly six zeros on~$T$ counted with multiplicity. Since~$f$ is an even function, four of these six zeros are given by~$0,\frac12\omega_1,\frac12\omega_2,\frac12(\omega_1+\omega_2)$ and the two remaining ones should be symmetric with respect to the origin. Let us call them~$z_0(z_1)$ and~$-z_0(z_1)$; note that, unlike the other four, these two zeros depend on $z_1$, and also $z_0$ is a continuous function of~$z_1$ due to Rouch\'e's theorem. 

It is clear that we cannot have $M_3^2=f(z_1,0)$ since this is a nonconstant function of~$z_1$, and similarly for $\frac12\omega_1$, $\frac12\omega_2$, and~$\frac12(\omega_1+\omega_2)$. Therefore, $M_3^2=f(z_1,z_0(z_1))$. Finally, if $z_1=\tfrac14\omega_1$, then the even function~$f(\frac14\omega_1,\,\cdot\,)$ has period~$\frac12\omega_1$ (and not just~$\omega_1$). This implies that~$z_0(\frac14\omega_1)=\frac14\omega_1+\frac12\omega_2$. Hence, $M_3^2=f(\tfrac14\omega_1,\tfrac14\omega_1+\tfrac12\omega_2)$, which gives the desired formula~\eqref{cubicequation}.
\end{proof}

\begin{corl} 
If~$M_2$ and~$M_3$ are obtained from the functions~$f^{[\pm \pm]}$ as above, then there is $\mu\in \R/\Z$ such that
\begin{equation}
\label{eq:M2M3=M2M3(mu)}
M_2=-\wp(\zm)-c_\cm,\quad M_3=\wp'(\zm),\quad \text{where}\ \ \zm:=(\tfrac12\!-\!\mu)\cm+\pi i
\end{equation}
and~$c_\cm$ is the constant from Lemma~\ref{lemma:p-function-G}.
\end{corl}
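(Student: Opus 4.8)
The plan is to realize the pair $(M_2,M_3)$ as a point on the real oval of the elliptic curve cut out by~\eqref{cubicequation} and to parametrize that oval by the horizontal circle $C:=\{x+\pi i:x\in\R/\cm\Z\}$ of the torus $T$ via the Weierstrass uniformization. Concretely, I would start from the classical identity $\wp'(z)^2=4\prod_{\omega}(\wp(z)-\wp(\tfrac12\omega))$, the product running over the three half-periods $\omega_1,\omega_2,\omega_1+\omega_2$. Setting $w:=-(M_2+c_\cm)$, the cubic~\eqref{cubicequation} says \emph{exactly} that $\wp'(z)^2=M_3^2$ for every $z$ with $\wp(z)=w$. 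Hence the corollary reduces to exhibiting a point of the prescribed form $z_\mu=(\tfrac12-\mu)\cm+\pi i$ with $\wp(z_\mu)=w$: once such a $z_\mu$ is found, $\wp'(z_\mu)=\pm M_3$, and replacing $\mu$ by $1-\mu$ (which sends $z_\mu$ to $-z_\mu$ modulo $\Lambda$ and flips the sign of $\wp'$) secures $\wp'(z_\mu)=M_3$; the degenerate case $M_3=0$ corresponds to $z_\mu$ being one of the two-torsion points $\tfrac12\omega_2$, $\tfrac12(\omega_1+\omega_2)$, where no sign issue arises.

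Next I would analyze $\wp$ along $C=\tfrac12\omega_2+\R\omega_1 \pmod{\Lambda}$. Since $\Lambda=\cm\Z+2\pi i\Z$ is conjugation-invariant and $\overline{x+\pi i}\equiv x+\pi i\pmod{\Lambda}$, both $\wp$ and $\wp'$ are real-valued on $C$. The only two-torsion points lying on $C$ are $\tfrac12\omega_2=\pi i$ and $\tfrac12(\omega_1+\omega_2)$; as the zeros of $\wp'$ on $T$ are precisely the two-torsion points, $\wp'$ does not vanish on either of the two open arcs into which these points split $C$, so $\wp$ is strictly monotone on each arc. Combined with $\wp(\tfrac12\omega_2)<\wp(\tfrac12(\omega_1+\omega_2))$ from Lemma~\ref{lemma:p-function-G}(ii), this shows that $\wp$ maps $C$ \emph{onto} the interval $[\wp(\tfrac12\omega_2),\wp(\tfrac12(\omega_1+\omega_2))]$ (covering it twice).

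Finally I would check that $w=-(M_2+c_\cm)$ belongs to this interval. Since $M_3\in\R$ and $M_2\ge0$ — the latter by Theorem~\ref{Hnconvergencetheorem}, as $M_2$ is the limit of a second moment of centered height differences — the right-hand side of~\eqref{cubicequation} is nonnegative; writing $Y:=M_2+c_\cm$ and using the ordering $\wp(\tfrac12\omega_2)<\wp(\tfrac12(\omega_1+\omega_2))<\wp(\tfrac12\omega_1)$ from Lemma~\ref{lemma:p-function-G}(ii), this forces $(Y+\wp(\tfrac12\omega_1))(Y+\wp(\tfrac12\omega_2))(Y+\wp(\tfrac12(\omega_1+\omega_2)))\le0$, whose real solutions are $Y\le-\wp(\tfrac12\omega_1)$ or $-\wp(\tfrac12(\omega_1+\omega_2))\le Y\le-\wp(\tfrac12\omega_2)$. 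The first alternative is excluded because $Y=M_2+c_\cm\ge c_\cm>-\wp(\tfrac12\omega_1)$, again by Lemma~\ref{lemma:p-function-G}(ii). Thus $Y$ lies in the second interval, i.e.\ $w\in\wp(C)$, which produces the desired point $z_\mu\in C$ and hence $\mu\in\R/\Z$ satisfying~\eqref{eq:M2M3=M2M3(mu)}.

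I expect the only genuinely delicate point to be excluding the unbounded branch $Y\le-\wp(\tfrac12\omega_1)$ of the cubic: this is exactly where the positivity $M_2\ge0$ (coupled with $\wp(\tfrac12\omega_1)+c_\cm>0$ from Lemma~\ref{lemma:p-function-G}(ii)) is indispensable — the cubic~\eqref{cubicequation} alone would permit $(M_2,M_3)$ to sit on the wrong real component of the elliptic curve, corresponding to $z_\mu$ on the real circle $\{\Im z=0\}$ instead of $C$, which would not match the discrete-Gaussian moments. The secondary (and routine) point is verifying that $\wp$ really surjects onto the whole interval $[\wp(\tfrac12\omega_2),\wp(\tfrac12(\omega_1+\omega_2))]$ along $C$, which follows from the real-valuedness and monotonicity argument above.
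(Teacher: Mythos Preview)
Your proposal is correct and follows essentially the same approach as the paper: both arguments recognize~\eqref{cubicequation} as the Weierstrass parametrization $(M_2,M_3)=(-\wp(z)-c_\cm,\wp'(z))$, identify the real locus with the two horizontal circles of~$T$, and use $M_2\ge 0$ together with Lemma~\ref{lemma:p-function-G}(ii) to exclude the component $\{\Im z=0\}$. Your write-up is considerably more detailed than the paper's four-sentence proof---you spell out the surjectivity of~$\wp|_C$ onto $[\wp(\tfrac12\omega_2),\wp(\tfrac12(\omega_1+\omega_2))]$ and the $\mu\mapsto 1-\mu$ sign fix---but the underlying idea is the same.
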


\begin{proof} 
This follows from the cubic equation~\eqref{cubicequation}, which is parameterized by \mbox{$M_2(z)=-\wp(z)-c_\cm$,} $M_3(z)=\wp'(z)$, $z\in\C/\Lambda$. As $M_2,M_3\in\R$, we must have $z\in \R/\omega_1\Z$ or $z-\tfrac12\omega_2\in\R/\omega_1\Z$; see Fig.~\ref{fig:elliptic-curve}. Item (ii) in Lemma~\ref{lemma:p-function-G} and the fact that $M_2\ge 0$ rule out the former possibility. The concrete parametrization of the latter set by $\mu\in\R/\Z$ is chosen for later convenience.
\end{proof}

\begin{figure}
    \centering
    \includegraphics[width=0.5\textwidth]{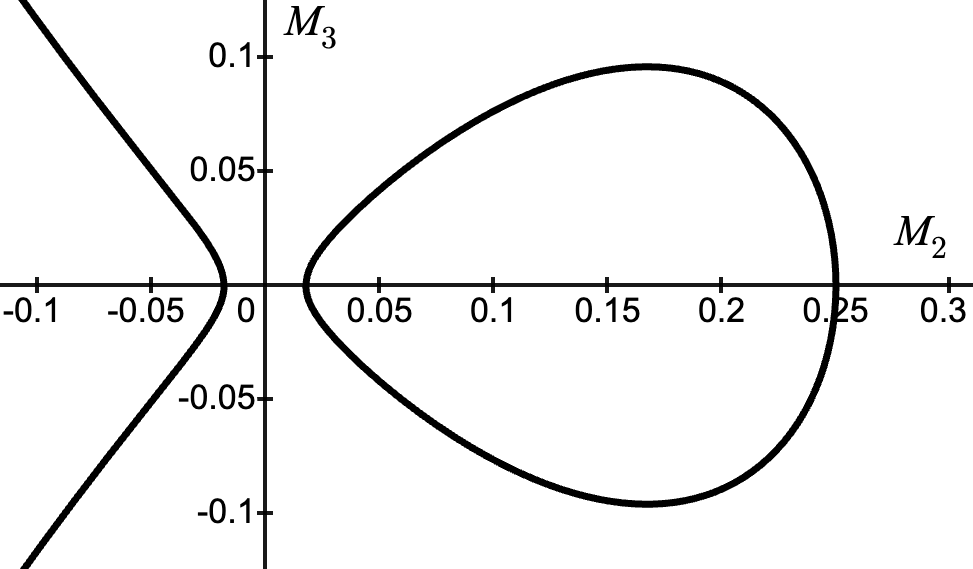}
    \caption{The real locus of the elliptic curve defined by the cubic equation~\eqref{cubicequation} and parameterized by $z\mapsto (-\wp(z)-c_\cm,\wp'(z))$ with $\omega_1=\cm=3\pi$ and $\omega_2=2\pi i$. The left component corresponds to $z\in\R/\omega_1\Z$ and the right one to $z-\frac12\omega_2\in\R/\omega_1\Z$. Since $M_2\ge 0$, all possible pairs $(M_2,M_3)$ belong to the right component.}
    \label{fig:elliptic-curve}
\end{figure}

\subsection{Model functions $f^{[\pm\pm]}_\mu$} \label{subsec:modelfcts}
Even though the functions~$f^{[\pm\pm]}:T^+\times T^+\to\C$ defined by \eqref{eq:fpmpm-inT-def} are uniquely determined by~$\Omega$, we do not have any explicit formulas for them because of the lack of conformal covariance in Problem~\ref{bvp}. Inspired by \cite[Lemma 7.3]{clrI}, we now define ``model functions'' $f^{[\pm\pm]}_\mu$ that depend on a parameter~$\mu\in\R/\Z$ and show that they produce the same functions $F_2^{[\pm\pm]}$ and~$F_3^{[\pm\pm\pm]}$ as~$f^{[\pm\pm]}$ provided that~$M_2$ and~$M_3$ are parameterized by~\eqref{eq:M2M3=M2M3(mu)}; see Proposition~\ref{prop:F23mu=F23} below.
 Due to Lemma~\ref{lemma:F2F3toAn}, this allows us to perform computations using~$f^{[\pm\pm]}_\mu$ instead of~$f^{[\pm\pm]}$ in the next section.

Let us define a family of meromorphic functions $g_\mu:\C\to\C$ for $\mu\in\R/\Z$ by
\begin{equation}
\label{eq:gmu-def}
g_\mu(w)=\frac{2}{\pi i}\cdot\frac{\theta_1'(0;\tau)\theta_3(w\!+\!\mu;\tau)}{\theta_3(\mu;\tau)\theta_1(w;\tau)},\quad \text{where}\quad \tau=\frac{2\pi i}{\cm}
\end{equation}
and $\theta_1$, $\theta_3$ are Jacobi theta functions; see \cite[Section 13.19]{bateman-transcendental-functions-II}. The function $g_\mu$ has singularities on the lattice $w\in\{m+\tau n: m,n\in\Z\}$. These singularities are all simple poles of residue $(-1)^{m+n}e^{-2ni\pi\mu}\frac{2}{\pi i}$; in particular, the simple pole at $w=0$ has residue $\frac{2}{\pi i}$.  The function~$g_\mu$ is quasiperiodic; specifically,
\begin{equation}
\label{eq:gmu-quasiperiodic}
g_\mu(w+1)=-g_\mu(w)\qquad\text{and}\qquad g_\mu(w+\tau)=-e^{-2\pi i\mu}g_\mu(w).
\end{equation}

Recall that above we worked with the rescaled lattice $\Lambda=\{\cm m+2\pi i n: m,n\in\Z\}$. Let  
\begin{equation}\label{eq:fmu=gmu-def}
f_\mu(z):=\frac{1}{\cm}\,g_\mu\!\left(\frac{z}{\cm}\right),\quad  z\in\C.
\end{equation}
Note that the function~$f_\mu$ is $\Lambda$-quasiperiodic and has a simple pole of residue~$\frac{2}{\pi i}$ at the origin. Moreover, $f_\mu$~has only one simple zero $z=z_\mu$ given by~\eqref{eq:M2M3=M2M3(mu)}, modulo $\Lambda$.

\begin{lmma} \label{lemma:gmugmu=wp}
The following identity holds for all~$\mu\in\R/\Z$ and $z\in\C/\Lambda$:
    \begin{equation}\label{p-function-mu}
        \big(\tfrac{\pi}{2}\big)^2f_\mu(z)f_\mu(-z)\ =\ \wp(z)-\wp(z_\mu),
    \end{equation}
where $\wp$ denotes the Weierstrass $\wp$-function for~$\Lambda$ and~$\zm$ is given by~\eqref{eq:M2M3=M2M3(mu)}.
\end{lmma}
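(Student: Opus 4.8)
The idea is to rescale to the lattice $L:=\Z+\tau\Z$ with $\tau=2\pi i/\cm$ and reduce \eqref{p-function-mu} to a clean theta-function identity on $\C/L$, which is then proved by comparing divisors of $L$-elliptic functions of order two.

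First I would substitute \eqref{eq:fmu=gmu-def} and \eqref{eq:gmu-def}. Writing $w:=z/\cm$ and using that $\theta_1(\,\cdot\,;\tau)$ is odd while $\theta_3(\,\cdot\,;\tau)$ is even, one computes $g_\mu(w)g_\mu(-w)=\tfrac{4}{\pi^2}\,\Phi(w)$, where
\[
\Phi(w)\,:=\,\frac{\theta_1'(0;\tau)^2\,\theta_3(w+\mu;\tau)\,\theta_3(w-\mu;\tau)}{\theta_3(\mu;\tau)^2\,\theta_1(w;\tau)^2}\,.
\]
Hence the left-hand side of \eqref{p-function-mu} equals $\bigl(\tfrac{\pi}{2}\bigr)^2\cm^{-2}g_\mu(w)g_\mu(-w)=\cm^{-2}\Phi(w)$. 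On the other hand, the homogeneity $\wp(\lambda u;\lambda\Gamma)=\lambda^{-2}\wp(u;\Gamma)$ applied with $\lambda=\cm$ gives $\wp(z;\Lambda)=\cm^{-2}\wp(w;L)$, and from \eqref{eq:M2M3=M2M3(mu)} together with $\tau/2=\pi i/\cm$ we get $\zm/\cm=\tfrac12-\mu+\tfrac\tau2=:w_\mu$. Therefore \eqref{p-function-mu} is equivalent to the reduced identity $\Phi(w)=\wp(w;L)-\wp(w_\mu;L)$ on $\C/L$.

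Next I would prove this reduced identity. The function $\Phi=\tfrac{\pi^2}{4}g_\mu(w)g_\mu(-w)$ is $L$-elliptic: this follows directly from the quasi-periodicity \eqref{eq:gmu-quasiperiodic}, since the factors $-1$ and $-e^{-2\pi i\mu}$ picked up by $g_\mu(w)$ under $w\mapsto w+1$ and $w\mapsto w+\tau$ are cancelled by the factors $-1$ and $-e^{2\pi i\mu}$ picked up by $g_\mu(-w)$, so $g_\mu(w)g_\mu(-w)$ is genuinely doubly periodic. In a fundamental domain, $\Phi$ has a single pole, a double pole at $w=0$ (a double zero of $\theta_1$ in the denominator), and its zeros are exactly the simple zeros at $w=w_\mu$ and $w=-w_\mu$, using that $g_\mu$ has exactly one simple zero per period, located at $w_\mu$ (this is the zero of $f_\mu$ recorded after \eqref{eq:fmu=gmu-def}, transported via $f_\mu(z)=\cm^{-1}g_\mu(z/\cm)$). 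Hence $\Phi$ and $\wp(\,\cdot\,;L)-\wp(w_\mu;L)$ have the same divisor $(w_\mu)+(-w_\mu)-2(0)$, so their quotient is an $L$-elliptic function with no poles, i.e.\ a constant. To identify it I would expand at $w=0$: from $\theta_1(w;\tau)=\theta_1'(0;\tau)w+O(w^3)$ and $\theta_3(\pm\mu;\tau)=\theta_3(\mu;\tau)$ one gets $\Phi(w)=w^{-2}+O(1)$, which matches the principal part $w^{-2}$ of $\wp(w;L)-\wp(w_\mu;L)$; therefore the constant equals $1$, which yields \eqref{p-function-mu}.

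I do not anticipate a genuine obstacle. The only points requiring mild care are (a) the bookkeeping of quasi-periods needed to confirm that $\Phi$ is truly $L$-elliptic rather than merely quasi-periodic, which \eqref{eq:gmu-quasiperiodic} makes transparent, and (b) the special parameters $\mu\in\{0,\tfrac12\}$, for which $w_\mu$ is a half-period and both sides acquire a double zero there instead of two simple zeros; the divisor count is unchanged, and alternatively one may note that both sides are analytic in $\mu$, so the identity for generic $\mu$ extends by continuity. An alternative route would be to quote the classical formula $\wp(u)-\wp(v)=-\theta_1'(0)^2\theta_1(u+v)\theta_1(u-v)/\bigl(\theta_1(u)^2\theta_1(v)^2\bigr)$ and rewrite $\theta_3$ in terms of $\theta_1$ via a half-period shift, but the divisor argument above is shorter and self-contained.
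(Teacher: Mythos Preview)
Your proof is correct and rests on the same idea as the paper's: both sides are elliptic functions with the same double pole at the origin, hence differ by a constant, which is then pinned down. The paper does this in two lines directly on $\C/\Lambda$---observing that both sides are even, elliptic, of the form $z^{-2}+O(1)$ near $0$, and vanish at $z_\mu$---whereas you first rescale to $L=\Z+\tau\Z$, expand $g_\mu(w)g_\mu(-w)$ explicitly in theta functions, and match the full divisor; this extra bookkeeping is sound but not needed.
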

\begin{proof} Both sides of~\eqref{p-function-mu} are even meromorphic functions on ~$\C/\Lambda$ of the form~$z^{-2}+O(1)$ as~$z\to0$.
Thus, their difference is constant, and both sides have a zero at $\zm$.
\end{proof}

We are now ready to define the model functions~$f^{[\pm\pm]}_\mu$. Recall that $T^+:=\R/\cm\Z\times(0,\pi)$.  Given~$s_1,s_2\in\{\pm\}$, let
\begin{equation}
\label{eq:fmu-def}
f^{[s_1,s_2]}_\mu(z_1,z_2):=s_1 f_\mu\big(z_2^{[s_2]}-z_1^{[s_1]}\big)\,,\qquad z_1,z_2\in T^+,
\end{equation}
where we write $z^{[+]}:=z$ and $z^{[-]}:=\overline{z}$ as usual. By construction, these functions are $\cm$-antiperiodic in each of the variables and are formally defined on a double cover of $T^+\times T^+$.

\begin{prop} \label{prop:F23mu=F23}
Let~$M_2$ and~$M_3$ be obtained from the functions~$f^{[\pm\pm]}$ as in Section~\ref{subsec:An,hn,Mn} and parameterized by~\eqref{eq:M2M3=M2M3(mu)}. Then, for all~$\{s_1,s_2,s_3\}\in\{\pm\}$ and~$z_1,z_2,z_3\in T^+$, we have
\begin{align*}
F_{2}^{[s_1,s_2]}(z_1,z_2)=F_{2,\mu}^{[s_1,s_2]}(z_1,z_2)\qquad \text{and}\qquad 
F_{3}^{[s_1,s_2,s_3]}(z_1,z_2,z_3)&=F_{3,\mu}^{[s_1,s_2,s_3]}(z_1,z_2,z_3),
\end{align*}
where~$F_{2,\mu}^{[\pm\pm]}$ and~$F_{3,\mu}^{[\pm\pm\pm]}$ are defined similarly to~\eqref{eq:F2-def} and~\eqref{eq:F3-def} using~$f^{[\pm\pm]}_\mu$ instead of~$f^{[\pm\pm]}$. 
\end{prop}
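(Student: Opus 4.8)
The plan is to verify both identities by matching singularities and then using the structural uniqueness already established in Proposition~\ref{M2M3Mn}. First I would check directly from the definition~\eqref{eq:fmu-def} and Lemma~\ref{lemma:gmugmu=wp} that the model functions produce the correct $F_2$. Indeed, for $s_1,s_2\in\{\pm\}$,
\[
F_{2,\mu}^{[s_1,s_2]}(z_1,z_2)=s_1 s_2\, f_\mu(z_2^{[s_2]}-z_1^{[s_1]})\,f_\mu(z_1^{[s_1]}-z_2^{[s_2]})=\tfrac{4}{\pi^2}s_1s_2\big(\wp(z_2^{[s_2]}-z_1^{[s_1]})-\wp(z_\mu)\big),
\]
which, by the computation of $F_2^{[s_1,s_2]}$ inside the proof of Theorem~\ref{thrm:cubicequation} and the parametrization $M_2=-\wp(z_\mu)-c_\cm$ from~\eqref{eq:M2M3=M2M3(mu)}, coincides with $F_2^{[s_1,s_2]}(z_1,z_2)$. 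This handles the first identity completely.

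For $F_3$, my approach mirrors the selection argument in the proof of Proposition~\ref{M2M3Mn}. I would first observe that the product $F_{3,\mu}^{[s_1,s_2,s_3]}(z_1,z_2,z_3)\,F_{3,\mu}^{[s_3,s_2,s_1]}(z_3,z_2,z_1)$ equals $F_{2,\mu}^{[s_1,s_2]}F_{2,\mu}^{[s_2,s_3]}F_{2,\mu}^{[s_3,s_1]}$ by the same algebra as for the true functions, and hence—using the $F_2$ identity just proved—agrees with the corresponding product for $f^{[\pm\pm]}$. Thus $F_{3,\mu}^{[s_1,s_2,s_3]}$ and $F_3^{[s_1,s_2,s_3]}$ are both roots of the same quadratic equation over the field of meromorphic functions, whose coefficients (the sum of the two conjugate roots, determined by $M_3$, and their product, determined by $M_2$) are identical. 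It remains to pin down that they are the \emph{same} root. I would do this exactly as in Proposition~\ref{M2M3Mn}: assume without loss of generality $s_1=s_2=+$, and compare the leading singular behaviour as $z_1,z_2\to z_0\neq z_3$. Using the residue $\tfrac{2}{\pi i}$ of $f_\mu$ at the origin (equivalently, the pole of $g_\mu$ at $w=0$), one gets
\[
F_{3,\mu}^{[+,+,s_3]}(z_1,z_2,z_3)=\frac{2}{\pi i}\cdot\frac{F_{2,\mu}^{[+,s_3]}(z_0,z_3)}{z_2-z_1}+O(1),
\]
which is the same asymptotic as for $F_3^{[+,+,s_3]}$ in~\eqref{eq:fpmpm-sing}; since $F_{2,\mu}^{[+,s_3]}=F_2^{[+,s_3]}$, the two functions select the same branch on a small open set, and being holomorphic/antiholomorphic in each variable, they agree everywhere on $T^+\times T^+\times T^+$.

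There is one point that requires care and which I expect to be the main (minor) obstacle: one must confirm that $f_\mu$ has \emph{exactly one} simple zero modulo $\Lambda$, located at $z_\mu$, so that the perfect-square/even-multiplicity reasoning and the branch selection go through without extra zeros of $F_{2,\mu}$ interfering. This follows from the quasiperiodicity~\eqref{eq:gmu-quasiperiodic} of $g_\mu$: a fundamental-domain argument shows $g_\mu$ has one pole and hence one zero, and Lemma~\ref{lemma:gmugmu=wp} together with the cubic parametrization~\eqref{eq:M2M3=M2M3(mu)} identifies that zero with $z_\mu/\cm$. The remaining verification—that the relevant combinations of $s_1,s_2,s_3$ not of the form ``two equal signs'' reduce to the handled case by relabeling, and that antiperiodicity of $f_\mu^{[\pm\pm]}$ in each variable matches that of $f^{[\pm\pm]}$—is routine bookkeeping. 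Once these are in place, Lemma~\ref{lemma:F2F3toAn} immediately upgrades the proposition to the statement that $f^{[\pm\pm]}_\mu$ reproduces all differential forms $\AA_n$, which is what the next section needs.
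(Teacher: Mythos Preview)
Your treatment of $F_2$ is correct and matches the paper's proof verbatim. The gap is in the $F_3$ step. You correctly observe that $F_{3,\mu}^{[s_1,s_2,s_3]}\cdot F_{3,\mu}^{[s_3,s_2,s_1]}=F_{2,\mu}^{[s_1,s_2]}F_{2,\mu}^{[s_2,s_3]}F_{2,\mu}^{[s_3,s_1]}$, so the \emph{products} of the two conjugate roots agree. But you then assert that the \emph{sums} agree as well (``the sum of the two conjugate roots, determined by $M_3$\ldots are identical''), and this is precisely what has not been shown. The sum $F_{3}^{[s_1,s_2,s_3]}+F_{3}^{[s_3,s_2,s_1]}$ is known from~\eqref{eq:F3mu+F3mu=} to equal the constant $is_1s_2s_3(\tfrac{2}{\pi})^3\wp'(z_\mu)$; however, there is no a~priori reason why $F_{3,\mu}^{[s_1,s_2,s_3]}+F_{3,\mu}^{[s_3,s_2,s_1]}$ should equal this same constant---the model functions $f_\mu^{[\pm\pm]}$ were introduced by fiat, not via a third-moment computation. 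Matching only the product and the leading pole does not pin down the pair: two pairs $(A,B)$ and $(C,D)$ can satisfy $AB=CD$ with $A,C$ sharing the same simple-pole asymptotics while $A\neq C$.

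What is missing is the explicit verification
\[
f_\mu(z_2-z_1)f_\mu(z_3-z_2)f_\mu(z_1-z_3)+f_\mu(z_1-z_2)f_\mu(z_2-z_3)f_\mu(z_3-z_1)\;=\;i\bigl(\tfrac{2}{\pi}\bigr)^3\wp'(z_\mu),
\]
which the paper carries out directly: one checks that the left-hand side is $\Lambda$-periodic in each variable (the quasiperiodicity factors of $g_\mu$ cancel), has at most simple poles, and is symmetric under permutations of $z_1,z_2,z_3$---hence the residues cancel and the expression is an entire elliptic function, i.e.\ a constant. The constant is then identified by a limiting argument (set $z_1=z_\mu$, $z_2=0$, send $z_3\to 0$, and use~\eqref{p-function-mu}). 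Once this is in place, your branch-selection via the asymptotics goes through exactly as you wrote.
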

\begin{rema} Note that we know the explicit form of the functions $F_2^{[s_1,s_2]}$. Indeed, it follows from~\eqref{h2} and Lemma~\ref{lemma:p-function-G} that
\begin{align}
F_{2}^{[s_1,s_2]}(z_1,z_2)&=-4^2\dd_{z_1}^{[s_1]}\dd_{z_2}^{[s_2]}\big(\pi^{-1}G(z_1,z_2)+M_2\hm_\top(z_1)\hm_\top(z_2)\big) \notag \\
&=s_1s_2\cdot \big(\tfrac2\pi\big)^2\big(\wp\big(z_2^{[s_2]}-z_1^{[s_1]}\big)-\wp(\zm)\big), \label{eq:F2mu=}
\end{align}
where we assume that $M_2$ is parameterized by~\eqref{eq:M2M3=M2M3(mu)}. Also, it follows from~\eqref{h3} that
\begin{equation}
\label{eq:F3mu+F3mu=}
F_{3}^{[s_1,s_2,s_3]}(z_1,z_2,z_3)+F_{3}^{[s_3,s_2,s_1]}(z_3,z_2,z_1)\ =\ is_1s_2s_3\cdot {(\tfrac{2}{\pi})^3}\wp'(\zm)
\end{equation}
under the same parametrization~\eqref{eq:M2M3=M2M3(mu)} of~$M_3$.
\end{rema}

\begin{proof}[Proof of Proposition~\ref{prop:F23mu=F23}] By definition of the functions~$f^{[\pm\pm]}_\mu$ we have
\[
F_{2,\mu}^{[s_1,s_2]}(z_1,z_2)=s_1s_2\cdot f_\mu\big(z_2^{[s_2]}-z_1^{[s_1]}\big)f_\mu\big(z_1^{[s_1]}-z_2^{[s_2]}\big),
\]
which equals~\eqref{eq:F2mu=} due to Lemma~\ref{lemma:gmugmu=wp}. Let us move to the functions~$F_{3}^{[s_1,s_2,s_3]}$. Arguing as in the proof of Proposition~\ref{M2M3Mn}, one sees that it is sufficient to show that
\[
F_{3}^{[s_1,s_2,s_3]}(z_1,z_2,z_3)+F_{3}^{[s_3,s_2,s_1]}(z_3,z_2,z_1)\,=\,F_{3,\mu}^{[s_1,s_2,s_3]}(z_1,z_2,z_3)+F_{3,\mu}^{[s_3,s_2,s_1]}(z_3,z_2,z_1)
\]
since then $F_{3,\mu}^{[s_1,s_2,s_3]}$ and $F_{3}^{[s_1,s_2,s_3]}$ solve the same quadratic equation and have the same asymptotics near the singularity, which leads to the same choice among the two solutions. Due to~\eqref{eq:F3mu+F3mu=}, proving this identity is equivalent to checking that
\[
f_\mu(z_2-z_1)f_\mu(z_3-z_2)f_\mu(z_1-z_3)+f_\mu(z_1-z_2)f_\mu(z_2-z_3)f_\mu(z_3-z_1)\ =\ i{(\tfrac{2}{\pi})^3}\wp'(\zm).
\]
To prove this identity, note that the function on the left-hand side is $\Lambda$-periodic in each of the variables $z_1,z_2,z_3$ due to the quasiperiodicity properties of~\eqref{eq:gmu-quasiperiodic} of~$g_\mu$. Moreover, this function has at most simple poles and is symmetric under permutations of~$z_1,z_2,z_3$, which means that it has no poles at all and is thus constant. It remains to identify this constant.

Recall that~$f_\mu(z_\mu)=0$ and set~$z_1=z_\mu$,~$z_2=0$. It then suffices to prove that
\[
f_\mu(-z_\mu)f_\mu(z_3)f_\mu(z_\mu-z_3)\ =\ i{(\tfrac{2}{\pi})^3}\wp'(\zm)\ =\ i\tfrac2\pi f'_\mu(z_\mu)f_\mu(-z_\mu),
\]
where we used~\eqref{p-function-mu} in the second equation. If we now send~$z_3\to0$, then the left-hand side becomes $f_\mu(-z_\mu)\cdot \frac{2}{\pi i}\cdot (-f'_\mu(z_\mu))$ since the function~$f_\mu$ has a simple pole of residue~$\frac{2}{\pi i}$ at the origin. 
\end{proof}

\begin{corl} \label{cor:An=Anmu} Let~$M_2$ and~$M_3$ be obtained from the functions~$f^{[\pm\pm]}$ as in Section~\ref{subsec:An,hn,Mn} and parameterized by~\eqref{eq:M2M3=M2M3(mu)}. Then, for each~$n\ge 2$, the differential forms~$\AA_n$ obtained from the functions~$f^{[\pm\pm]}$ equal the differential forms~$\AA_{n,\mu}$ obtained by the same formulas from the functions~$f^{[\pm\pm]}_\mu$.
\end{corl}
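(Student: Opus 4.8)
The plan is simply to chain together Lemma~\ref{lemma:F2F3toAn} and Proposition~\ref{prop:F23mu=F23}; no new analysis is needed, as this is a formal consequence of those two statements.

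First I would recall, from the proof of Lemma~\ref{lemma:F2F3toAn}, that each differential form $\AA_n$ is a sum of terms indexed by derangements $\sigma\in D_n$, and that the term attached to $\sigma$ factors over the disjoint cycles of $\sigma$ into cyclic products of the shape $f^{[s_{i_1},s_{i_2}]}(z_{i_1},z_{i_2})f^{[s_{i_2},s_{i_3}]}(z_{i_2},z_{i_3})\cdots f^{[s_{i_\ell},s_{i_1}]}(z_{i_\ell},z_{i_1})$, each of which is a fixed rational expression in the functions $F_2^{[\cdot,\cdot]}$ and $F_3^{[\cdot,\cdot,\cdot]}$ (for an $\ell$-cycle, exactly the explicit $\prod F_3/\prod F_2$ formula displayed in that proof). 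Thus there is a universal formula $\AA_n=\Phi_n\big(\{F_2^{[s_1,s_2]}\},\{F_3^{[s_1,s_2,s_3]}\}\big)$. The key structural point is that this very same $\Phi_n$, evaluated instead on the model quantities $F_{2,\mu}^{[\cdot,\cdot]}$ and $F_{3,\mu}^{[\cdot,\cdot,\cdot]}$, produces $\AA_{n,\mu}$: the model functions $f^{[\pm\pm]}_\mu$ have the same holomorphicity/antiholomorphicity pattern in the two variables, and by the very definitions of $F_{2,\mu}$ and $F_{3,\mu}$ the cyclic products built from $f^{[\pm\pm]}_\mu$ obey the same rational identities in $F_{2,\mu},F_{3,\mu}$ that the genuine cyclic products obey in $F_2,F_3$. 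Hence $\AA_{n,\mu}=\Phi_n\big(\{F_{2,\mu}^{[s_1,s_2]}\},\{F_{3,\mu}^{[s_1,s_2,s_3]}\}\big)$ with the same $\Phi_n$.

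Second, under the parametrization~\eqref{eq:M2M3=M2M3(mu)}, Proposition~\ref{prop:F23mu=F23} gives $F_2^{[s_1,s_2]}=F_{2,\mu}^{[s_1,s_2]}$ and $F_3^{[s_1,s_2,s_3]}=F_{3,\mu}^{[s_1,s_2,s_3]}$ as meromorphic functions, for all sign patterns $s_1,s_2,s_3\in\{\pm\}$. Substituting these equalities into the universal formula $\Phi_n$ immediately yields $\AA_n=\AA_{n,\mu}$ for every $n\ge 2$. The appearance of $F_2$'s in denominators of $\Phi_n$ is harmless: both $\AA_n$ and $\AA_{n,\mu}$ are bona fide differential forms with meromorphic coefficients, so equality of the two substitutions is just an equality of meromorphic functions and needs no separate justification.

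The honest answer about difficulty is that there is no real obstacle here; the only thing to be careful about is the bookkeeping already carried out in the proof of Lemma~\ref{lemma:F2F3toAn}, namely that decomposing an arbitrary derangement into disjoint cycles and applying the explicit cycle formula genuinely expresses $\AA_n$ through $F_2$ and $F_3$ alone, with structural coefficients that do not depend on whether one uses the true functions $f^{[\pm\pm]}$ or the model functions $f^{[\pm\pm]}_\mu$. Since that is precisely what Lemma~\ref{lemma:F2F3toAn} asserts, the write-up of the corollary can consist of little more than invoking Lemma~\ref{lemma:F2F3toAn} and Proposition~\ref{prop:F23mu=F23} in that order.
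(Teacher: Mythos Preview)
Your proposal is correct and follows exactly the same approach as the paper, which simply cites Lemma~\ref{lemma:F2F3toAn} (implicitly combined with Proposition~\ref{prop:F23mu=F23}) and gives no further detail. Your additional remarks about the universal formula $\Phi_n$ and the harmlessness of the $F_2$ denominators are reasonable elaborations, but the paper evidently regards them as self-evident.
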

\begin{proof} See Lemma~\ref{lemma:F2F3toAn}.
\end{proof}

\begin{rema}
One can also show that for each~$\mu\in\R/\Z$ the functions~$f^{[\pm\pm]}_\mu$ satisfy all the properties of the functions~$f^{[\pm\pm]}$ that we used in Section~\ref{subsec:An,hn,Mn}. In other words, there is no structural reason that the set of $\mu\in\R/\Z$ appearing in the parametrization~\eqref{eq:M2M3=M2M3(mu)} of $(M_2,M_3)$ could be further restricted. For shortness, we do not include such a discussion in this paper.
\end{rema}

\subsection{Connected correlation functions of height fluctuations} \label{subsec:conncorr}
Recall that, given a collection of symmetric correlation functions~$h_n=h_n(z_1,\ldots,z_n)$, the \emph{connected} (or \emph{Ursell}) correlation functions $h^\conn_n=h^\conn_n(z_1,\ldots,z_n)$ can be defined inductively by setting~$h_1^\conn(z):=h_1(z)$ and
\begin{equation}
\label{eq:hconn-def}
h^\conn_{|Z|}(Z)\ :=\ h_{|Z|}(Z)-\sum\nolimits_{Z_1\sqcup\cdots\sqcup Z_m=Z}h^\conn_{|Z_1|}(Z_1)\cdots h^\conn_{|Z_m|}(Z_m),
\end{equation}
where the sum is taken over all nontrivial partitions of the set~$Z=\{z_1,\ldots,z_n\}$ of variables. In the degenerate case when~$h_n=M_n$ is a sequence of numbers that do not depend on the points~$z_1,\ldots,z_n$, the connected correlation functions equal \emph{cumulants} $\kappa_n$ of the sequence~$(M_n)_{n\ge 1}$. 

In our setup, the functions $h_n=h_n(z_1,\ldots,z_n)$ are obtained by integrating differential forms that have determinantal structure. Moreover, due to Corollary~\ref{cor:An=Anmu} we can use the functions~$f^{[\pm\pm]}_\mu$ discussed in the previous section instead of the non-explicit functions~$f^{[\pm\pm]}$ in the definition of these differential forms:
\begin{align*}
h_n\,&=\,\int_{z_1^0}^{z_1}\cdots\int_{z_n^0}^{z_n}\AA_{n,\mu}\,,\quad \text{where} \\
\AA_{n,\mu}(z_1,\dots,z_n) &= \frac{1}{4^n}\sum_{s_1,\dots,s_n\in\{\pm\}}\det[\1_{j\neq k}f_\mu^{[s_j,s_k]}(z_j,z_k)]_{j,k=1}^n\prod_{k=1}^ndz_k^{[s_k]}
\end{align*}
and $z_1^0,\ldots,z_n^0\in\dd_\bot T^+$. Recall that $h_n(z_1,\ldots,z_n)=0$ if at least one~$z_k\in\dd_\bot T^+$. By definition, the functions~$h^\conn_n$ obey the same property. It then follows directly from~\eqref{eq:hconn-def} (see also~\cite[Lemma~4.9]{berggren-nicoletti}) that~$h^\conn_n$ can be obtained by the same multiple integral of the differential form
\begin{equation}
\label{eq:Aconn-def}
\AA^\conn_{n,\mu}=\frac{1}{4^n}\sum_{s_1,\dots,s_n\in\{\pm\}}(-1)^{n-1}\!\sum_{\sigma\in C_n}\prod_{k=1}^n f_\mu^{[s_k,s_{\sigma(k)}]}(z_k,z_{\sigma(k)})\,dz_k^{[s_k]}\,,
\end{equation}
where we denote by~$C_n\subset S_n$ the set of \emph{$n$-cycles} of the indices~$\{1,\ldots,n\}$. (In particular, this justifies the name ``connected'': only connected diagrams in the expansion of the determinant survive when passing from the Wirtinger derivatives of~$h_n$ to those of~$h^\conn_n$.) 

Recall that $f_\mu^{[s_1,s_2]}(z_1,z_2):=s_1 f_\mu(z_2^{[s_2]}-z_1^{[s_1]})$. The next proposition is a particular case of~\cite[Theorem~4.2]{berggren-nicoletti}.
We include its proof in our paper in order to keep it self-contained.

\begin{prop}
    For all $n\geq2$ and $z_1,\dots,z_{n+1}\in\C$, we have the identity
    \begin{equation}\label{eq:sumIn-recursion}
        \frac{d}{d\mu}\left[\,\sum_{\sigma\in C_n}\prod_{k=1}^n f_\mu(z_{\sigma(k)}-z_k)\right]\;=\;
        \frac{\pi\cm}{2i}\sum_{\sigma\in C_{n+1}}\prod_{k=1}^{n+1}f_\mu(z_{\sigma(k)}-z_k).
    \end{equation}
\end{prop}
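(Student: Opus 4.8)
The plan is to compute $\frac{d}{d\mu}f_\mu$ explicitly and feed the result into the sum over $n$-cycles. First I would establish, via logarithmic differentiation of the theta-function formula~\eqref{eq:gmu-def} and the heat equation $\partial_\mu^2\theta_3 = \frac{1}{i\pi}\partial_\tau\theta_3$ for Jacobi theta functions (or, more elementarily, directly from the product/sum representation of $\theta_3$), an identity of the schematic form
\[
\frac{d}{d\mu}f_\mu(z) \;=\; \frac{\pi\cm}{2i}\,f_\mu(z)\bigl(\,r_\mu(z)-r_\mu(0)\,\bigr),
\]
where $r_\mu$ is essentially $\frac{\theta_3'}{\theta_3}$ evaluated at the appropriate argument, rescaled back to the $\Lambda$-lattice via~\eqref{eq:fmu=gmu-def}. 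The key structural feature I want is that $\frac{d}{d\mu}f_\mu(z)$ factors as $f_\mu(z)$ times a function of $z$ that is \emph{$\Lambda$-periodic} (the quasiperiodicity factors $-1$ and $-e^{-2\pi i\mu}$ from~\eqref{eq:gmu-quasiperiodic} contribute $\mu$-derivatives that cancel in the combination $r_\mu(z)-r_\mu(0)$) and has only a \emph{simple} pole at $z\in\Lambda$. The normalization constant $\frac{\pi\cm}{2i}$ is pinned down by matching residues at $z=0$: since $f_\mu$ has residue $\frac{2}{\pi i}$ there and is $\mu$-independent to leading order, $\frac{d}{d\mu}f_\mu$ is regular at the origin, which forces the stated combination.

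Second, I would substitute this into the left-hand side of~\eqref{eq:sumIn-recursion}. By the Leibniz rule,
\[
\frac{d}{d\mu}\prod_{k=1}^n f_\mu(z_{\sigma(k)}-z_k)
=\frac{\pi\cm}{2i}\sum_{j=1}^n\Bigl(\prod_{k=1}^n f_\mu(z_{\sigma(k)}-z_k)\Bigr)\bigl(r_\mu(z_{\sigma(j)}-z_j)-r_\mu(0)\bigr).
\]
Now I would introduce a fresh variable $z_{n+1}$ and use the key algebraic identity that lets one ``insert'' a point into an $n$-cycle: namely, for the transfer function $r_\mu$ there should be a partial-fractions-type relation expressing
$f_\mu(z_{\sigma(j)}-z_j)\,(r_\mu(z_{\sigma(j)}-z_j)-r_\mu(0))$
as a limit, or better as a contour-integral / residue extraction in $z_{n+1}$, of $f_\mu(z_{\sigma(j)}-z_{n+1})f_\mu(z_{n+1}-z_j)$ — this is the mechanism by which a cycle $(\dots\,j\,\sigma(j)\,\dots)$ in $C_n$ becomes a cycle $(\dots\,j\,n{+}1\,\sigma(j)\,\dots)$ in $C_{n+1}$. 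Concretely one checks that $\lim_{z_{n+1}\to z_j}$ or an appropriate regularized version of $f_\mu(z_{n+1}-z_j)$ produces the $r_\mu$ factor, using that $f_\mu$ has a simple pole with known residue; the $-r_\mu(0)$ term corresponds to the degenerate insertion and cancels against the boundary term. Summing over $j$ and over $\sigma\in C_n$, and noting that every $(n{+}1)$-cycle arises exactly once this way (delete $n+1$ from an $(n{+}1)$-cycle to recover a unique $n$-cycle on $\{1,\dots,n\}$ together with the insertion position $j$), yields the right-hand side $\frac{\pi\cm}{2i}\sum_{\sigma\in C_{n+1}}\prod_{k=1}^{n+1}f_\mu(z_{\sigma(k)}-z_k)$.

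The main obstacle I anticipate is step one together with the precise form of the insertion identity in step two: getting the logarithmic $\mu$-derivative of $f_\mu$ into exactly the shape $\frac{\pi\cm}{2i}\,f_\mu(z)(r_\mu(z)-r_\mu(0))$ with the \emph{same} function $r_\mu$ that governs the pole structure of $f_\mu(z_{n+1}-z_j)f_\mu(z_{\sigma(j)}-z_{n+1})$ as $z_{n+1}$ is specialized. This is a theta-function computation (the relevant identity is classical — it is the addition/derivative formula underlying the $\sigma$-function and the elliptic analogue of partial fractions), but one must track the modular parameter $\tau=2\pi i/\cm$, the rescaling $z\mapsto z/\cm$ from~\eqref{eq:fmu=gmu-def}, and the factor $\frac{1}{\cm}$ carefully so that all the $\cm$'s combine into the single prefactor $\frac{\pi\cm}{2i}$. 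Since the statement is quoted as a special case of~\cite[Theorem~4.2]{berggren-nicoletti}, an alternative is to verify it by showing both sides are $\Lambda$-periodic elliptic functions of each $z_k$ with the same (simple) pole structure and the same residues, reducing the identity to matching finitely many constants; I would likely present whichever of these two routes is shorter, with the residue-matching argument as a clean fallback that avoids heavy theta-function manipulation.
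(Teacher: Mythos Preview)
Your proposal is correct and follows essentially the same architecture as the paper's proof: logarithmic differentiation of $f_\mu$ producing the factor $\theta_3'/\theta_3$ (your $r_\mu$), the Leibniz rule, and the bijection $C_{n+1}\leftrightarrow C_n\times\{1,\dots,n\}$ obtained by deleting the point $n+1$ from an $(n{+}1)$-cycle. The one point to sharpen is your ``insertion identity'': the paper does \emph{not} obtain it by a limit or residue extraction in $z_{n+1}$ (that would only prove the identity for degenerate $z_{n+1}$), but rather shows directly that the relevant difference---your $\sum_j(r_\mu(z_{\sigma(j)}-z_j)-r_\mu(0))$ minus $\frac{\pi}{2i}\sum_j f_\mu(z_{n+1}-z_j)f_\mu(z_{\sigma(j)}-z_{n+1})/f_\mu(z_{\sigma(j)}-z_j)$---is doubly periodic with no poles in each variable (including $z_{n+1}$), hence constant, and then evaluates at $z_1=\cdots=z_n=0$; this is precisely your ``fallback'' and is in fact the primary route, not a backup.
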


\begin{proof} Let us start with pointing out that we already considered the particular case~$n=2$ in the proof of Proposition~\ref{prop:F23mu=F23}: see the formulas~\eqref{eq:F2mu=} and~\eqref{eq:F3mu+F3mu=} and note that $\frac{d}{d\mu}z_\mu=-\cm$ due to~\eqref{eq:M2M3=M2M3(mu)}. 

Now let~$n\ge 3$. We will prove a rescaled version of the identity~\eqref{eq:sumIn-recursion} with the function~$g_\mu$ given by~\eqref{eq:gmu-def} instead of~$f_\mu$ and with the rescaled factor~$\frac{\pi}{2i}$ in the right-hand side; see~\eqref{eq:fmu=gmu-def}. Denote
\[
I_n(w_1,\ldots,w_n)\ :=\ \prod_{k=1}^n g_\mu(w_k-w_{k-1})\,,
\]
where~$w_0:=w_n$. It follows from~\eqref{eq:gmu-def} that
\begin{align*}
&\frac{d}{d\mu}\left[\,\sum_{\sigma\in C_n}\prod_{k=1}^n g_\mu\big(w_{\sigma^k(1)}-w_{\sigma^{k-1}(1)}\big)\right]\;=\;\frac{d}{d\mu}\left[\,\sum_{\sigma\in C_n}I_n(w_1,w_{\sigma(1)},\ldots,w_{\sigma^{n-1}(1)})\right]\\
& \qquad\qquad= \sum_{\sigma\in C_n} I_n(w_1,w_{\sigma(1)},\dots,w_{\sigma^{n-1}(1)}) \sum_{k=1}^n\left(\frac{\theta_3'(w_{\sigma^k(1)}-w_{\sigma^{k-1}(1)}+\mu;\tau)}{\theta_3(w_{\sigma^k(1)}-w_{\sigma^{k-1}(1)}+\mu;\tau)} -\frac{\theta_3'(\mu;\tau)}{\theta_3(\mu;\tau)}\right).
\end{align*}
On the other hand, we can write
\[
\sum_{\widetilde{\sigma}\in C_{n+1}} I_{n+1}\big(w_1,w_{\widetilde{\sigma}(1)},\ldots,w_{\widetilde{\sigma}{}^n(1)}\big)=
\sum_{\sigma\in C_n}\sum_{k=1}^{n} I_{n+1}(w_1,\ldots,w_{\sigma^{k-1}(1)},w_{n+1},w_{\sigma^k(1)},\ldots,w_{\sigma^{n-1}(1)})\,.       
\]
Without loss of generality, let us assume that~$\sigma$ is the right shift, i.e., that~$\sigma^{k-1}(1)=k$. 
In order to prove the (rescaled) identity~\eqref{eq:sumIn-recursion}, it suffices to show that, for all $w_1,\ldots,w_{n+1}\in\C$,
\begin{equation}
\label{eq:x-sum=cst}
\sum_{k=1}^n \frac{\theta_3'(w_{k}-w_{k-1}+\mu;\tau)}{\theta_3(w_{k}-w_{k-1}+\mu;\tau)} 
\ -\
\frac{\pi}{2i}\sum_{k=1}^n\frac{g_\mu(w_{n+1}-w_{k-1})g_\mu(w_{k}-w_{n+1})}{g_\mu(w_{k}-w_{k-1})}
\ =\
n\,\frac{\theta_3'(\mu;\tau)}{\theta_3(\mu;\tau)}.
\end{equation}
Once this is done, one multiplies~\eqref{eq:x-sum=cst} by $I_n(w_1,\ldots,w_n)=I_n(w_1,w_{\sigma(1)},\dots,w_{\sigma^{n-1}(1)})$ and sums over all permutations~$\sigma\in S_n$. Note that~\eqref{eq:x-sum=cst} is a doubly-periodic function of each~$w_1,\ldots,w_{n+1}$.

As a function of~$w_{n+1}$, each term in the second sum in~\eqref{eq:x-sum=cst} has two simple poles at the points $w_{k-1}$ and $w_{k}$ with residues~$\frac2{\pi i}$ and $-\frac2{\pi i}$, respectively. Therefore, this sum has no singularities as a function of~$w_{n+1}$ and hence does not depend on~$w_{n+1}$.

Recall that the only zero of the Jacobi theta function~$\theta_3$ in the fundamental domain is $\frac12+\frac12\tau$, and let~$w_\mu:=-\mu+\frac12+\frac12\tau=\cm^{-1}z_\mu$, where~$z_\mu$ is given by~\eqref{eq:M2M3=M2M3(mu)}. In particular, $w_\mu$ is the only zero of the function~$g_\mu$ in the fundamental domain. Considered as a function of each~$w_k$, $k\le n$, the first sum in~\eqref{eq:x-sum=cst} has two simple poles at the points~$w_{k-1}+w_\mu$ and $w_{k+1}-w_\mu$ with residues~$+1$ and~$-1$, respectively. The second sum has simple poles at the same points with residues
\[
\frac{\pi}{2i}\frac{g_\mu(w_{n+1}-w_{k-1})g_\mu(w_{k-1}+w_\mu-w_{n+1})}{g_\mu'(w_\mu)}\quad \text{and}\quad -\frac{\pi}{2i}\frac{g_\mu(w_{n+1}-w_{k+1}+w_\mu)g_\mu(w_{k+1}-w_{n+1})}{g_\mu'(w_\mu)},
\]
respectively. Note that the doubly periodic function~$w\mapsto g_\mu(w)g_\mu(w_\mu-w)$ has no singularities (since $g_\mu(w_\mu)=0$, which compensates the pole of~$g_\mu$ at the origin). Therefore, we have the equation $g_\mu(w)g_\mu(w_\mu-w)=g_\mu(w)g_\mu(w_\mu-w)|_{w=0}=-\frac{2}{\pi i}g'_\mu(w_\mu)$ for all~$w\in\C$. This means that the left-hand side of~\eqref{eq:x-sum=cst}, viewed as a function of each~$w_k$, does not have singularities and hence is constant. 

Finally, if we send all $w_1,\dots,w_n\to0$, then the second sum in~\eqref{eq:x-sum=cst} vanishes since $g_\mu$ has a pole at $0$, and each term in the first sum tends to $\frac{\theta_3'(\mu;\tau)}{\theta_3(\mu;\tau)}$.
\end{proof}

\begin{corl} \label{cor:hconn=kn}
Let~$M_2$ and~$M_3$ be obtained from the functions~$f^{[\pm\pm]}$ as in Section~\ref{subsec:An,hn,Mn} and parameterized by~\eqref{eq:M2M3=M2M3(mu)}. Then, for all~$n\ge 3$, the connected correlation functions~$h^\conn_n$ are given by
\begin{equation}
\label{eq:hconn=kn}
h^\conn_n(z_1,\ldots,z_n)\;=\;(-1)^{n-1}\wp^{(n-2)}(z_\mu)\cdot \hm_\top(z_1)\cdots\hm_\top(z_\mu).
\end{equation}
\end{corl}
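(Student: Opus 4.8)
The plan is to first invoke Corollary~\ref{cor:An=Anmu} to replace $f^{[\pm\pm]}$ everywhere by the model functions $f^{[\pm\pm]}_\mu$, and then to evaluate the connected form~\eqref{eq:Aconn-def} and its iterated integral in closed form. Set
\[
\Sigma_n(z_1,\ldots,z_n;\mu)\,:=\,\sum_{\sigma\in C_n}\prod_{k=1}^n f_\mu\big(z_{\sigma(k)}-z_k\big).
\]
The first step would be to show that $\Sigma_n$ is \emph{constant} in $z_1,\ldots,z_n$ for every $n\ge 3$: identity~\eqref{eq:sumIn-recursion} exhibits $\tfrac{\pi\cm}{2i}\Sigma_n$ (for $n\ge 3$) as the $\mu$-derivative of an expression which, by the statement of that proposition, is independent of one of its arguments, and since $\Sigma_n$ is symmetric under permutations of its arguments it is then independent of all of them. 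To determine these constants, I would combine the two base cases already at hand --- $\Sigma_2(z_1,z_2;\mu)=\tfrac{4}{\pi^2}\big(\wp(z_2-z_1)-\wp(z_\mu)\big)$ from Lemma~\ref{lemma:gmugmu=wp} (cf.~\eqref{eq:F2mu=}) and $\Sigma_3=\tfrac{8i}{\pi^3}\wp'(z_\mu)$ from~\eqref{eq:F3mu+F3mu=} --- with the identity $\tfrac{d}{d\mu}z_\mu=-\cm$, whence $\tfrac{d}{d\mu}\wp^{(j)}(z_\mu)=-\cm\,\wp^{(j+1)}(z_\mu)$. Feeding this into~\eqref{eq:sumIn-recursion} and inducting on $n$ yields $\Sigma_n=a_n\wp^{(n-2)}(z_\mu)$ with $a_{n+1}=-\tfrac{2i}{\pi}a_n$ and $a_3=\tfrac{8i}{\pi^3}$, that is,
\[
\Sigma_n\,=\,(-1)^{n-1}\,i^{\,n-2}\,\big(\tfrac{2}{\pi}\big)^{n}\,\wp^{(n-2)}(z_\mu),\qquad n\ge 3.
\]

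Next I would substitute this into~\eqref{eq:Aconn-def}. Writing $f^{[s_k,s_{\sigma(k)}]}_\mu(z_k,z_{\sigma(k)})=s_k f_\mu\big(z_{\sigma(k)}^{[s_{\sigma(k)}]}-z_k^{[s_k]}\big)$ and setting $w_k:=z_k^{[s_k]}$, the inner $n$-cycle sum factors, for $n\ge 3$, as $\big(\prod_{k}s_k\big)\,\Sigma_n(w_1,\ldots,w_n;\mu)=\big(\prod_{k}s_k\big)\,a_n\,\wp^{(n-2)}(z_\mu)$, which depends neither on the points nor on the individual signs. Hence
\[
\AA^\conn_{n,\mu}\,=\,\frac{(-1)^{n-1}a_n}{4^n}\,\wp^{(n-2)}(z_\mu)\prod_{k=1}^n\Big(\sum_{s_k\in\{\pm\}}s_k\,dz_k^{[s_k]}\Big)\,=\,\frac{(-1)^{n-1}a_n(2i)^n}{4^n}\,\wp^{(n-2)}(z_\mu)\prod_{k=1}^n dy_k,
\]
using $\sum_{s\in\{\pm\}}s\,dz^{[s]}=dz-d\overline{z}=2i\,dy$. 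A routine simplification of the prefactor, $(-1)^{n-1}a_n(2i)^n 4^{-n}=i^{\,2n-2}\pi^{-n}=(-1)^{n-1}\pi^{-n}$, then gives $\AA^\conn_{n,\mu}=\tfrac{(-1)^{n-1}}{\pi^n}\,\wp^{(n-2)}(z_\mu)\,dy_1\cdots dy_n$.

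Finally, since this form involves only the differentials $dy_k$ and the base points lie on $\dd_\bot T^+=\R/\cm\Z\times\{0\}$ (so $\Im z_k^0=0$), the iterated integral $\int_{z_1^0}^{z_1}\cdots\int_{z_n^0}^{z_n}\AA^\conn_{n,\mu}$ collapses to $\tfrac{(-1)^{n-1}}{\pi^n}\wp^{(n-2)}(z_\mu)\prod_{k=1}^n\Im z_k$, and absorbing $\pi^{-n}$ via $\hm_\top(z)=\pi^{-1}\Im z$ gives exactly~\eqref{eq:hconn=kn}. The one genuinely nontrivial point --- and the step I would be most careful about --- is the first one: correctly reading off from the preceding proposition that $\Sigma_n$ is independent of each of its arguments for $n\ge 3$, and then identifying the resulting constant $a_n$. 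Everything downstream is bookkeeping of normalizing factors.
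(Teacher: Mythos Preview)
Your proof is correct and follows essentially the same route as the paper's: both use Corollary~\ref{cor:An=Anmu} to pass to the model functions, then apply the recursion~\eqref{eq:sumIn-recursion} starting from the base case~\eqref{p-function-mu} to identify the cycle sum $\Sigma_n$ as a constant equal to $(-2i/\pi)^n\wp^{(n-2)}(z_\mu)$, and finally substitute into~\eqref{eq:Aconn-def} and integrate. The only cosmetic difference is that the paper writes $\Sigma_n$ directly as an iterated $\mu$-derivative of $\Sigma_2$, whereas you first extract constancy from the shape of the recursion and then determine the constant inductively; the content is the same.
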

\begin{proof} Recall that~$z_\mu=(\frac12-\mu)\cm+\pi i$. It follows from~\eqref{eq:sumIn-recursion} and~\eqref{p-function-mu} that
\[
\sum_{\sigma\in C_n}\prod_{k=1}^n f_\mu(z_{\sigma(k)}-z_k)\;=\; \left(\frac{2i}{\pi\cm}\right)^{\!n-2}\!\frac{d^{n-2}}{d\mu^{n-2}}\left[\frac4{\pi^2}(\wp(z)-\wp(z_\mu))\right] \;=\; \left(-\frac{2 i}{\pi} \right)^{\!n}\!\wp^{(n-2)}(z_\mu).
\]
Substituting this into the formula~\eqref{eq:Aconn-def} for~$\AA^\conn_{n,\mu}$ yields
\[
\AA^\conn_{n,\mu}=(-1)^{n-1}\wp^{(n-2)}(z_\mu)\cdot \left(-\frac{i}{2\pi}\right)^n\!\!\sum_{s_1,\dots,s_n\in\{\pm\}}\;\prod_{k=1}^n s_k \,dz_k^{[s_k]}.
\]
This equals ~$(-1)^{n-1}\wp^{(n-2)}(z_\mu)\cdot d_{z_1}\cdots d_{z_n}[\hm_\top(z_1)\cdots\hm_\top(z_n)]$, which completes the proof.
\end{proof}

\subsection{Proof of Theorem~\ref{mainthrm}} \label{subsec:proofmainthrm}
We start with a short discussion of cumulants of the discrete Gaussian distribution. Recall that $\xi_\mu$ is a \emph{centered} discrete Gaussian random variable with parameters $\mu\in\R/\Z$ and $\cm^{-1}\in\R_+$; see~\eqref{eq:discrGauss}. 
The characteristic function of~$\xi_\mu$ equals
\begin{align*}
\varphi_\mu(t):=\E[e^{it\xi_\mu}] & \textstyle  =Z_\mu^{-1}e^{-ia_\mu t}\sum_{k=-\infty}^{+\infty}e^{ikt-\frac12\cm(k-\mu)^2}\\
& =Z_\mu^{-1}e^{-ia_\mu t-\frac12\cm\mu^2}\Theta\left(\tfrac{1}{2\pi}(t-i\cm\mu);\tfrac{i\cm}{2\pi}\right),
\end{align*}
where $\Theta$ is the theta function; see~\cite[Section 13.19]{bateman-transcendental-functions-II}. Note that~$\tau^*:=\frac{i\cm}{2\pi}=-\tau^{-1}$, where~$\tau$ is the modulus of the lattice~$\Lambda$ that we used above; see~\eqref{eq:gmu-def}. Therefore, the cumulants~$\kappa_{n,\mu}$ of~$\xi_{\mu}$ are given by
\[
\kappa_{n,\mu}\;=\;(-i)^n\frac{d^n}{dt^n} \log\varphi_\mu(t)\bigg|_{t=0}=\; 
\frac{1}{(2\pi i)^n}\frac{d^n}{dw^n}\log\Theta(w;\tfrac{i\cm}{2\pi})\bigg|_{w=-\frac{i\cm \mu}{2\pi}}\,,\quad n\ge 2\,.
\]
The function $\Theta(\,\cdot\,;\tau^*)$ is quasiperiodic with respect to the lattice $\Lambda^*:=\{m+\tau^* n:m,n\in \Z\}$ and has one simple zero in its fundamental domain located at the point~$\frac12+\frac12\tau^*$. Hence,
\[
    \frac{d^2}{dw^2}\big[\log(\Theta(w;\tfrac{i\cm}{2\pi}))\big]=-\wp(w-\tfrac{1}{2}-\tfrac{i\cm}{4\pi}\,;\Lambda^*)+c
\]
for some constant~$c$. It follows that for all~$n\ge 3$ we have
\[
\kappa_{n,\mu} \;=\; -\frac{1}{(2\pi i)^n}\wp^{(n-2)}\big(\tfrac12+\tfrac{i\cm}{4\pi}-\tfrac{i\cm\mu}{2\pi}\,;\Lambda^*\big) \;=\;(-1)^{n-1}\wp^{(n-2)}\big(z_\mu\,;\Lambda\big)
\]
since $\Lambda=-2\pi i\Lambda^*$. Note that these values appear as coefficients on the right-hand side of the formula~\eqref{eq:hconn=kn}. 

\begin{proof}[Proof of Theorem~\ref{mainthrm}] As above, let~$\mu\in\R/\Z$ be the parameter such that the formulas~\eqref{eq:M2M3=M2M3(mu)} hold for~$M_2$ and~$M_3$ obtained from the functions~$f^{[\pm\pm]}$ in Section~\ref{subsec:An,hn,Mn}; see Proposition~\ref{prop:h2h3}. Let~$\widetilde{h}_n$ be the correlation functions of the random field on the right-hand side of~\eqref{eq:GFF+disc}, and let~$\widetilde{h}^\conn_n$ be the corresponding connected correlation functions. Recall that the connected correlation functions of the Gaussian Free Field vanish for all $n\ge 3$ and that the connected correlation functions of the sum of two independent fields equal the sum of those of the two fields (e.g., one can deduce this property from~\eqref{eq:hconn-def} by induction). Therefore, we have
\begin{align*}
\widetilde{h}^\conn_2(z_1,z_2)\;&=\;\pi^{-1}G(z_1,z_2)+\kappa_{2,\mu}\,\hm_\top(z_1)\hm_\top(z_2)\quad \text{and}\quad\\ 
\widetilde{h}^\conn_n(z_1,\ldots,z_n)\;&=\;\kappa_{n,\mu}\,\hm_\top(z_1)\cdots\hm_\top(z_n)\quad \text{for all}\ \ n\ge 3.
\end{align*}
Due to Corollary~\ref{cor:hconn=kn} (and the formula~\eqref{h2}), the connected correlation functions~$h^\conn_{n}$ defined from~$h_n$ by~\eqref{eq:hconn-def} have the same values. Thus,~$h_n=\widetilde{h}_n$ for all~$n\ge 2$, which proves item~(ii). Item~(i) is obtained by considering the boundary values of~$h_n$ on the top boundary.
\end{proof}


\end{document}